\documentclass[a4paper,11pt, reqno]{amsart}
\usepackage[DIV=12, oneside]{typearea}
\usepackage[utf8]{inputenc}
\usepackage[T1]{fontenc}
\usepackage[english]{babel}
\usepackage{amsmath}
\usepackage{esint}
\usepackage{amssymb, amsthm, bbm, dsfont}
\usepackage{enumerate}
\usepackage{color, mathrsfs}
\usepackage{thmtools}
\usepackage{todonotes}
\usepackage[missing={noch nicht eingerichtet}]{gitinfo}
\usepackage{eso-pic}
\usepackage{isorot}
\usepackage[shortlabels]{enumitem}
\usepackage{microtype}
\usepackage[colorlinks=true, urlcolor=blue, linkcolor=blue,
citecolor=black]{hyperref}
\usepackage{array}
\usepackage{booktabs}
\setlength{\heavyrulewidth}{1.5pt}
\setlength{\abovetopsep}{4pt}
\setlength{\belowbottomsep}{4pt}
\usepackage{stmaryrd}
\newcommand{\njet}{$\pmb{-}$}
\theoremstyle{plain}
\declaretheorem[title=Theorem, parent=section]{thm}
\declaretheorem[title=Lemma,sibling=thm]{lem}
\declaretheorem[title=Corollary,sibling=thm]{cor}
\declaretheorem[title=Proposition,sibling=thm]{prop}

\declaretheoremstyle[bodyfont=\normalfont, qed=$\clubsuit$]{endsign}
\declaretheorem[title=Remark, sibling=thm, style=endsign]{bem}
\theoremstyle{definition}
\declaretheorem[title=Definition,sibling=thm]{defi}
\declaretheorem[title=Example]{example}
\numberwithin{equation}{section}
\setcounter{secnumdepth}{2}
\setcounter{tocdepth}{2}

\newcommand{\R}{  \mathbb{R}}

\newcommand{\N}{  \mathbb{N}}


\newcommand{\cA}{  \mathcal{A}}
\newcommand{\cB}{  \mathcal{B}}
\newcommand{\cC}{  \mathcal{C}}

\newcommand{\cE}{  \mathcal{E}}

\newcommand{\cI}{  \mathcal{I}}

\newcommand{\cL}{  \mathcal{L}}



\renewcommand{\epsilon}{\varepsilon}
\DeclareMathOperator{\sgn}{sgn}
\DeclareMathOperator{\supp}{supp}

\DeclareMathOperator{\dist}{dist}

\newcommand{\norm}[1]{\left\| #1 \right\|}
\newcommand{\bet}[1]{\left| #1 \right|}
\newcommand{\ska}[1]{\left\langle #1 \right\rangle}



\newcommand{\Ind}[1]{\ensuremath \mathbbm{1}_{#1}}

\renewcommand{\d}{\, \textnormal{d}}

\newcommand{\opL}{\ensuremath{\mathcal{L}}}
\newcommand{\eps}{\varepsilon}

\newcommand{\seminorm}[1]{\bigl[ #1 \bigr]}

\newcommand{\HRdk}{H(\R^d;k)}
\newcommand{\LtwoOm}{L^{2}_{\Omega}(\mathbb{R}^d)}
\newcommand{\HOmk}{H(\Omega;k)}
\newcommand{\HdualOmRd}{H_\Omega^*(\R^d;k)}
\newcommand{\HOmRd}{H_\Omega(\R^d;k)}
\newcommand{\VOmk}{V(\Omega;k)}

\parindent0em
\parskip0.5em
\itemsep0.5em
\newlength{\widestlabel}
\settowidth{\widestlabel}{(iii)}
\setlist{itemindent=\parindent}
\setlist[enumerate]{labelwidth=\widestlabel, leftmargin=!}
\addto\extrasenglish{%

}
\author{Matthieu Felsinger}
\author{Moritz Kassmann} 
\author{Paul Voigt}
\address{Fakultät für Mathematik\\
Universität Bielefeld \\
Postfach 100131 \\
D-33501 Bielefeld}
\email{m.felsinger@math.uni-bielefeld.de}
\address{Fakultät für Mathematik\\
Universität Bielefeld \\
Postfach 100131 \\
D-33501 Bielefeld}
\email{moritz.kassmann@uni-bielefeld.de}
\address{Fakultät für Mathematik\\
Universität Bielefeld \\
Postfach 100131 \\
D-33501 Bielefeld}
\email{pvoigt@math.uni-bielefeld.de}
\title{The Dirichlet problem for nonlocal operators}
\subjclass[2010]{Primary 47G20; Secondary 35D30, 31C25}
\keywords{integro-differential operator, nonlocal operator, fractional Laplacian, Dirichlet problem}
\thanks{We thank Tadele Mengesha for several helpful comments.}

\begin{document}

\begin{abstract}
In this note we set up the elliptic and the parabolic Dirichlet problem for
linear nonlocal operators.  As opposed to the classical case of second order
differential operators, here the ``boundary data'' are prescribed on the
complement of a given bounded set. We formulate the problem in the classical
framework of Hilbert spaces and prove unique solvability using standard
techniques like the Fredholm alternative.
\end{abstract}
\maketitle
\section{Introduction}\label{sec:intro}

Given an open and bounded set $\Omega\subset\R^d$ and functions $f: \Omega \to
\R$ and $g:\partial \Omega \to \R$, the classical Dirichlet problem is to find a
function $u:\Omega \to \R$ such that
\begin{subequations}
\label{Classical_Dirichlet_Problem}
\begin{align}
 -\Delta u &= f \qquad \text{ in } \Omega \,,
\label{eq:D-problem_classical_eq}\\
  u &= g \qquad \text{ on } \partial \Omega \,.
\label{eq:D-problem_classical_bd}
\end{align}
\end{subequations}
A more general problem is to solve, instead of
equation \eqref{eq:D-problem_classical_eq}, the equation
\begin{align}
 -\operatorname{div} \left( A(\cdot) \nabla u \right) &= f \qquad \text{
in } \Omega \,,
\label{eq:D-problem_div_form_eq}
\end{align}
where $A(x)=(a_{ij}(x))$ is
a $d\times d$-matrix which is uniformly positive definite and uniformly
bounded. When working in Hilbert spaces a standard assumption is $f
\in H^{-1}(\Omega)=(H_0^1(\Omega))^*$ and
$g \in H^1(\Omega)$. $H^1(\Omega)$ denotes the Hilbert space of all
$L^2(\Omega)$-functions
with a distributional derivate in $L^2(\Omega)$ and $H^1_0(\Omega)$
is the closure of $C^\infty_c(\Omega)$ with respect to the norm of
$H^1(\Omega)$. The Riesz representation Theorem resp. the Lax-Milgram Lemma
imply that there is a unique function $u \in H^1(\Omega)$ such that $u-g \in
H^1_0(\Omega)$ and for every $v \in H^1_0(\Omega)$ 
\begin{align}\label{eq:D-problem_div_form_weak_eq}
 \left( A(\cdot) \nabla u ,\nabla v \right)_{L^2(\Omega)} = \ska{f,v}\,. 
\end{align}
This is one way to solve the Dirichlet problem. Note that the bilinear form on
the left-hand-side of \eqref{eq:D-problem_div_form_weak_eq} is not necessarily
symmetric. An important further extension is given when one considers
additional terms of lower order in \eqref{eq:D-problem_div_form_eq}. 

The aim of this article is to provide a similar set-up for the Dirichlet
problem associated with nonlocal operators. Our main objects are nonlocal
operators $\opL$ of the form
\begin{align}\label{eq:nonlocal-op}
\left(\opL u\right)(x)=\lim\limits_{\varepsilon \to
0+} \int\limits_{y \in \R^d \setminus
B_\varepsilon(x)}(u(x)-u(y))k(x,y)\d y \,, 
\end{align}
where $k:\R^d \times \R^d \to [0,\infty]$ is measurable. Our focus is on kernels
$k$ with the following properties: (i) $k$ is not necessarily symmetric, (ii) $k$ might
be singular on the diagonal and\linebreak[2] (iii) $k$ is allowed to be discontinuous. These properties
imply
that $\cL u$ cannot be evaluated in the classical sense at a given point $x
\in \R^d$
even if $u$ is smooth.

Assume $\Omega\subset\R^d$ is open, bounded and $f \in L^2(\Omega)$. Given
a function $g:\complement\,\Omega  \to \R$, we want to find a solution $u:\R^d
 \to \R$ to the  problem 
\begin{subequations}
\label{Dirichletproblem_einleitung}
\begin{align} \label{eq:dirichlet-operatorform}
 {\opL}u & =f \quad\text{ in } \Omega \\
 u & =g \quad\text{ on } \complement \Omega.
\end{align}
\end{subequations}
In order to do so, we need to answer the following questions: Which is a
natural space for the data $g$ and for the solution $u$? In which sense is
equation \eqref{eq:dirichlet-operatorform} to be interpreted? Note that, opposed
to
the classical case of second order differential operators, here the data $g$ are
prescribed on the complement of a given bounded set. The term ``complement
value problem'' would be more appropriate for
\eqref{Dirichletproblem_einleitung} than ``boundary value problem''. 

\begin{bem}
It makes sense to study -- instead of \eqref{eq:nonlocal-op} -- nonlocal
operators of the form
\begin{align*}
\left({\opL}u\right)(x)=\lim\limits_{\varepsilon \to
0+} \int\limits_{y \in \Omega \setminus
B_\varepsilon(x)}(u(x)-u(y))k(x,y)\d y \,. 
\end{align*}
In that case it would not make sense to describe data on $\complement\, \Omega$.
For a large class of kernels one could study associated boundary value problems
with boundary data $g:\partial \Omega \to \R$. Note that such
operators appear as generators of so-called censored processes. We do
not study these cases in this article.
\end{bem}
In this work we establish a Hilbert space approach to the nonlocal Dirichlet
problem \eqref{Dirichletproblem_einleitung}. We prove results on existence and
uniqueness of solutions under various assumptions on the kernels
$k:\R^d\times\R^d\to [0,\infty]$ which we assume to be measurable.  The
symmetric and anti-symmetric parts of $k$ are defined by 
\[k_s(x,y)=\frac{1}{2}\left(k(x,y)+k(y,x)\right)\quad \text{and}\quad
k_a(x,y)=\frac{1}{2}\left(k(x,y)-k(y,x)\right).\]
Clearly, $|k_a(x,y)|\leq k_s(x,y)$ for almost all $x,y\in\R^d$. Throughout
this article we assume that
the symmetric part of the kernel satisfies the following integrability
condition: 
\begin{equation}\label{eq:integrierbarkeitsbedingung-k_s}\tag{L}
 x\mapsto\int\limits_{\R^d}\left(1 \wedge \bet{x-y}^2\right)k_s(x,y)\d y 
\quad \in
L^1_{loc}(\R^d).
\end{equation}
We call a kernel $k$ {\bf integrable} if, for every
$x\in \R^d$ the quantity $\int_{\R^d} k_s(x,y) \d y$ is finite and the  
mapping $x\mapsto \int_{\R^d} k_s(x,y) \d y$ is locally integrable. We
call a kernel {\bf non-integrable} if it is not integrable in the sense above.
In this work we deal with integrable and non-integrable kernels at the same
time. In \autoref{sec:examples} we provide several characteristic
examples. A simple integrable example is given by
$k(x,y)=\mathds{1}_{B_1}(x-y)$, a simple non-integrable example by
$k(x,y)=|x-y|^{-d-1}$, both kernels being symmetric.

For a certain class of
kernels $k$ the Lax-Milgram Lemma turns out to be a good tool whereas for
another class the Fredholm alternative is more appropriate. Rather than going
through all assumptions in detail, for this introduction let us restrict
ourselves to an example in the simple setting where $\Omega$
equals the unit ball $B_1 \subset \R^d$. 

\begin{example}
Assume $0 < \beta <
\frac{\alpha}{2} < 1$. Let $I_1, I_2$ be arbitrary nonempty open subsets of
$S^{d-1}$ with $I_1=-I_1$. Set
$\mathcal{C}_j=\{h \in \R^d
|\, \tfrac{h}{|h|} \in I_j\}$ for $j \in \{1,2\}$ and  
\[k(x,y)=
|x-y|^{-d-\alpha}\mathds{1}_{\cC_1}(x-y)
+|x-y|^{-d-\beta}\mathds{1}_{\cC_2}(x-y) \mathds{1}_{B_1}(x-y) \,. \]
The part involving $|x-y|^{-d-\beta}$ can be seen as a lower order
perturbation of the main part of the kernel resp. integro-differential
operator produced by $|x-y|^{-d-\alpha}\mathds{1}_{\cC_1}(x-y)$. We discuss this
example in detail in \autoref{sec:examples}. Define complement data
$g:\R^d\to\R$ by
\[g(x)=\begin{cases}
        (|x|-1)^\gamma\qquad &\text{if }1\leq |x|\leq 2\,,\\
        0, &\text{else.}
       \end{cases}\]
where $\gamma$ is an arbitrary real number satisfying
$\gamma>\frac{\alpha-1}{2}$. Note that $g$ may be unbounded. Let $f \in
L^2(\Omega)$ be arbitrary. For such a kernel $k$ and such data $g$ and $f$ our
results imply that the Dirichlet problem \eqref{Dirichletproblem_einleitung}
for $\Omega=B_1 \in \R^d$ has a unique variational solution $u$.

Another interesting example to which our theory applies, is given by $k(x,y) =
b(x) |x-y|^{-d-\alpha(x)}$ under certain assumptions on the functions $b:\R^d
\to (0,\infty)$ and $\alpha:\R^d \to (0,2)$, see Example \ref{exa:schilling} in
\autoref{sec:examples}.

In \autoref{cor:wellposedness-para-nonzero} we provide an existence result for parabolic equations
(see \eqref{eq:dirichlet-para-nonzero}) where the operator contains a time-dependent kernel $k_t$ 
and where the complement data and the initial value are prescribed by a single function $h(t,x)$. 
This result covers the case $k_t=k$ and $h(t,x)=g(x)$ with $g$ and $k$ as above.
\end{example}
Let us comment on related results in the literature. It is remarkable that,
although the questions and the framework of our studies are quite basic, the
results have not been established yet. One reason might be that
integro-differential operators not satisfying the transmission property
have been studied only recently when considered in bounded domains. On the
other hand, the proof of the weak maximum principle for nonsymmetric nonlocal
operators, \autoref{thm:weak-maximum}, turns out to be quite tricky.  

To our best knowledge, the first profound study of a nonlocal Dirichlet problem
for integro-differential operators violating the transmission property is
provided in \cite[Ch. VII]{BlHa86} using balayage spaces and the method
of Perron, Wiener and Brelot. A detailed study of nonlocal Dirichlet problems 
including a Hilbert space approach  can be found in \cite{Hoh_Jakob96}. Section
6 of \cite{Hoh_Jakob96} addresses the questions of this article but assumes the
bilinear forms to be symmetric. As the
proofs show, symmetry of $k$ simplifies the situation greatly. Symmetry is not
assumed in the previous sections of \cite{Hoh_Jakob96} but there, the symbol
(i.e. the generator) is used which would lead to strong assumptions on
the kernel $k$. Note that in our framework, typically the integro-differential
operator cannot be evaluated pointwise, even when applied to smooth functions. 

The Dirichlet problem for nonlocal operators is studied for fully nonlinear
problems in \cite{BCI08} using viscosity solutions. There, the complement data
are chosen independently from the kernels which is very different from our
approach where, for every $k$ there is an appropriate function space for the
data $g$. 

Variational solutions to nonlocal problems have been already
considered by groups working in the theory of peridynamics, e.g. in 
\cite{AksoyluMengesha2010, DGLZ12, BeMo13, DuMe13a, DuMe13b}. This theory
describes a nonlocal continuum model for
problems involving discontinuities or other singularities in the deformation.
The corresponding integration kernels are often integrable but non-integrable
kernels are considered too. What we call ``complement data'' in this article is
called ``volume constraints'' in articles on peridynamics. An important tool
in the articles above are appropriate Poincar\'{e}-Friedrichs
inequalities. It occurs that \autoref{lem:poincare-nonsingular}, in the case of
scalar functions, is more general than corresponding Poincar\'{e}-Friedrichs
inequalities in the aforementioned articles. A major difference of our approach
is that we study general (non-symmetric) kernels and non-zero
complement data together with appropriate function spaces.

We do not at all discuss regularity of solutions. Regularity up to the
boundary is studied carefully in \cite{ROSe13} for the case of the fractional
Laplacian. 

Results on nonsymmetric nonlocal Dirichlet forms are related to our
work, see our discussion of \eqref{eq:domination_by_ktilde} below. We refer
the reader to  \cite{HMS10}, \cite{MaSu12}, \cite{FuUe12}
and \cite{SchillingWang2011}. We believe that our examples can be helpful
when studying Hunt processes generated by nonsymmetric nonlocal
(semi-)Dirichlet forms.

The article is organized as follows. In \autoref{sec:setup} we define the
function spaces needed for our approach and explain their basic properties. We
also define what a solution of \eqref{Dirichletproblem_einleitung} shall be.
\autoref{subsec:poincare} is
dedicated to nonlocal versions of the Poincar\'{e}-Friedrichs inequality. In
\autoref{sec:lax_milgram} we prove a G{\aa}rding inequality, comment on the
sector condition and apply the
Lax-Milgram Lemma to the nonlocal
Dirichlet problem. \autoref{sec:uniqueness-fredholm} is devoted to the weak
maximum
principle for integro-differential operators in bounded domains. This tool
is applied when using the Fredholm alternative in \autoref{subsec:fredholm}. Our existence and uniqueness results are extended to the
parabolic Dirichlet
problem in \autoref{sec:parabolic}. Finally, in \autoref{sec:examples}
we provide
many detailed examples of kernels $k$ and discuss their properties. Although,
formally, this section could be omitted, it presumably is one of the
most important one for every reader.\\
A short index with all conditions used in this work is given at the end of \autoref{sec:examples} on p.~\pageref*{index}.

\section{Function spaces and variational solutions}\label{sec:setup}
In this section we derive a weak resp. variational formu\-lation of
\eqref{Dirichletproblem_einleitung}. We start with the defi\-nition of the
relevant function spaces.

\subsection{Function spaces}{\ }
\begin{defi}[Function spaces]\label{defi:function-spaces}
Let $\Omega\subset\R^d$ be open and assume that the kernel $k$ satisfies \eqref{eq:integrierbarkeitsbedingung-k_s}. We define the following linear spaces:
\begin{enumerate}[(i)]
\item $L^p_\Omega(\R^d)=\{u\in L^p(\R^d) \colon u=0 \text{ a.e. on
}\complement\,\Omega\}$.
\item Define 
\begin{align*}
&\VOmk =\Big\{v:\R^d \to \R \colon v|_\Omega
\in L^2(\Omega),
 \left(v(x)-v(y)\right)
k_s^{1/2}(x,y) \in L^2(\Omega \times \R^d) \Big\} \,,\\
&\big[u,v\big]_{V(\Omega;k)}=\iint\limits_{\Omega\,\R^d}\left[u(x)-u(y)\right]
\left[v(x)-v(y)\right] k_s(x,y)\d y\d x\,. 
\end{align*}
A seminorm on $\VOmk$ is given by $\left[v, v \right]_{\VOmk}$. Let us
emphasize that $\Omega = \R^d$ is allowed in this definition. In this case we
write $V(\R^d;k)=H(\R^d;k)$ and a norm -- which is obviously induced by a scalar
product -- on this space is defined by
\[\|v\|_{H(\R^d;k)}^2=\|v\|_{L^2(\R^d)}^2+\iint\limits_{\R^d\,\R^d}(v(x)-v(y))^2
k_s(x,y)\d y\d x\,.\]
\item $\HOmRd=\left\{ u \in \HRdk \colon u=0\text{ a.e. on
}\complement\, \Omega\right\}$
endowed with the norm $\|\cdot\|_{\HRdk}$.
\end{enumerate}  
\end{defi}
\enlargethispage{2em}
\begin{bem}\label{bem:function-spaces}{\ }
\begin{enumerate}[a)]\itemsep0.5em
 \item It is clear from this definition that
\begin{equation}\label{eq:kette-raeume}
\left(\HOmRd,\norm{\cdot}_{\HRdk} \right) \hookrightarrow \left(
\HRdk,\norm{\cdot}_{\HRdk} \right) 
\end{equation}
and $H(\R^d;k) \subset V(\Omega;k)$.
 Moreover, if $g \in \VOmk$ and $g=0$ a.e. on $\complement\,\Omega$, then
$g \in \HOmRd$.
 \item In the case $k(x,y)=\alpha (2-\alpha) \bet{x-y}^{-d-\alpha}$, $\alpha \in
(0,2)$, we write
 \[ \cE^\alpha(u,v)= \bigl[ u,v \bigr]_{\HRdk}\,. \]
 In this case we recover the standard fractional Sobolev-Slobodeckij spaces and
the norms are equivalent to the standard norms given on these spaces. More
precisely, we obtain $H^{\alpha/2}(\R^d)=\HRdk$ and if $\Omega$ is a
Lipschitz domain we have (cf. \cite[Theorem 3.33]{McLean00})
 \[ \HOmRd = H_0^{\alpha/2}(\Omega) \left( = \text{completion of }
C_c^\infty(\Omega) \text{ w.r.t. } \norm{\cdot}_{H^{\alpha/2}(\R^d)} \right).\]
If additionally $\alpha \neq 1$ then
\[ \text{completion of } C_c^\infty(\Omega) \text{ w.r.t. }
\norm{\cdot}_{H^{\alpha/2}(\R^d)} = \text{completion of } C_c^\infty(\Omega)
\text{ w.r.t. } \norm{\cdot}_{H^{\alpha/2}(\Omega)}\,. \qedhere \]
\end{enumerate}
\end{bem}
The following example illustrates that finiteness of the seminorm on $\VOmk$
requires some
regularity of the function across $\partial \Omega$:
\begin{example}\label{example:randwerte}
Let $\Omega=B_1(0)$, $\alpha\in(0,2)$ and define $k:\R^d\times\R^d\to[0,\infty]$
by 
\[k(x,y)= \alpha (2-\alpha) |x-y|^{-d-\alpha}\,.\]
In this case, $\HRdk$ coincides with the fractional Sobolev space
$H^{\alpha/2}(\R^d)$ and we have $\cE^k(u,v) = \cE^\alpha(u,v)$ for $u,v \in \HRdk$. 

Define $g:\R^d\to\R$ by
\[g(x)=\begin{cases}
        (|x|-1)^\beta\qquad &\text{if }1\leq |x|\leq 2\,,\\
        0, &\text{else.}
       \end{cases}\]
We show that $g\in \VOmk$ if and only if $\beta>\frac{\alpha-1}{2}$. Since
$\alpha$ is fixed we omit the factor $\alpha (2-\alpha)$. Then 
\begin{align*}
 [g,g]_{\VOmk}&=\iint\limits_{B_1\,\R^d} (g(x)-g(y))^2 |x-y|^{-d-\alpha}\d
x\d y=\int\limits_{B_1} \int\limits_{B_2\setminus B_1}(|x|-1)^{2\beta}
|x-y|^{-d-\alpha}\d x\d y\\
		 &=\int\limits_{B_2\setminus
B_1} (|x|-1)^{2\beta}\int\limits_{B_1}
|x-y|^{-d-\alpha}\d y\d x\,.
\end{align*}
For $1 < \bet{x} < 2$ choose $\xi = \left( \frac{3-
\bet{x}}{2\bet{x}} \right) x$. Then we may
estimate 
\begin{align*}
\int\limits_{B_1}|x-y|^{-d-\alpha}\d y&\geq \int\limits_{B_{(|x|-1)/2}(\xi)}
|x-y|^{-d-\alpha}\d y \geq \int\limits_{B_{(|x|-1)/2}(\xi)}
\left(2\left(|x|-1\right)\right)^{-d-\alpha}\d y\\
&\geq \bet{B_1} \left(\frac{|x|-1}{2}\right)^d
\left(2\left(|x|-1\right)\right)^{-d-\alpha}\,. 
\end{align*}
Thus $\displaystyle [g,g]_{\VOmk} \geq C \int\limits_{B_2\setminus B_1}
\left(|x|-1\right)^{2\beta-\alpha} \d x$ for some constant $C=C(d)>0$.
This integral is finite if $2\beta-\alpha>-1$. 
On the other hand, for $x\in B_2\setminus B_1$, we have 
\[\int\limits_{B_1}|x-y|^{-d-\alpha}\d y \leq \int\limits_{B_2(x)\setminus
B_\delta(x)} |x-y|^{-d-\alpha}\d y \leq C' \dist(x,\partial B_1)^{-\alpha}\,.\] 
Therefore, $ \displaystyle [g,g]_{\VOmk} \leq C' \int\limits_{B_2\setminus
B_1}
\left(|x|-1\right)^{2\beta-\alpha} \d x$, which shows that $ g \in \VOmk$ if and
only if $\beta>\frac{\alpha-1}{2}$.

In the second order case ($\alpha=2$) the function $g$ has a trace on
$\partial\Omega$ if and only if $\beta>\frac{1}{2}$. We note that $g\notin
H^{s}(\R^d)$ for $s>\frac12$ because of the discontinuity at $|x|=2$.  
\end{example}

\begin{lem}\label{lem:hilbertraeume} Let $\Omega \subset \R^d$ be an open set. The spaces
$\HOmRd$ and $\HRdk$ are separable Hilbert spaces. 
\end{lem}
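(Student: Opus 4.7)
The plan is to identify both spaces as closed subspaces of a separable Hilbert space and thereby deduce completeness, separability and the Hilbert-space structure in one stroke. First I would handle $\HRdk$, then obtain $\HOmRd$ as a closed subspace of it.

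For $\HRdk$, I would work with the linear map
\[ T : \HRdk \to L^2(\R^d) \oplus L^2(\R^d\times\R^d), \qquad Tu = \bigl(u, \, (u(x)-u(y)) k_s^{1/2}(x,y)\bigr), \]
which is by construction an isometry once $\HRdk$ is equipped with the inner product polarizing $\norm{\cdot}_{\HRdk}^2$. The bilinearity, symmetry and positive-definiteness of this inner product are immediate (positive-definiteness uses the $L^2(\R^d)$-piece). Since $L^2(\R^d) \oplus L^2(\R^d\times\R^d)$ is a separable Hilbert space, it suffices to show that the range $T(\HRdk)$ is closed: then $\HRdk$ is isometrically isomorphic to a closed subspace of a separable Hilbert space, hence itself a separable Hilbert space.

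The main (and really only nontrivial) step is this closedness of the range, which amounts to completeness of $\HRdk$. Given a Cauchy sequence $(u_n)$ in $\HRdk$, the two components of $Tu_n$ are Cauchy in their respective $L^2$-spaces, so $u_n \to u$ in $L^2(\R^d)$ for some $u$, and $(u_n(x)-u_n(y)) k_s^{1/2}(x,y) \to F(x,y)$ in $L^2(\R^d\times\R^d)$ for some $F$. The task is to identify $F(x,y) = (u(x)-u(y))k_s^{1/2}(x,y)$ almost everywhere. I would extract a subsequence $(u_{n_j})$ that converges to $u$ pointwise a.e.\ on $\R^d$; then
\[ (u_{n_j}(x)-u_{n_j}(y)) k_s^{1/2}(x,y) \longrightarrow (u(x)-u(y)) k_s^{1/2}(x,y) \quad \text{for a.e. } (x,y) \in \R^d\times\R^d. \]
A further subsequence of the same indices converges to $F$ pointwise a.e., so $F = (u(x)-u(y)) k_s^{1/2}(x,y)$ a.e., which shows $u \in \HRdk$ and $u_n \to u$ in $\HRdk$. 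This a.e.-subsequence identification is the only nonformal point in the argument; everything else is bookkeeping.

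For $\HOmRd$, I would simply observe that the condition $u=0$ a.e.\ on $\complement\,\Omega$ is preserved by $L^2(\R^d)$-convergence and hence by convergence in $\HRdk$. Therefore $\HOmRd$ is a closed linear subspace of the separable Hilbert space $\HRdk$, and so inherits the Hilbert-space structure and separability. The hard part, if any, is the subsequence identification in the completeness argument; no assumption beyond \eqref{eq:integrierbarkeitsbedingung-k_s} (in particular no nondegeneracy of $k_s$) is needed, because the $L^2(\R^d)$-component of the norm already controls the function itself.
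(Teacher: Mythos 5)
Your proposal is correct and follows essentially the same route as the paper's proof: both realize $\HRdk$ as a closed subspace of a separable product Hilbert space via the isometry $u \mapsto \bigl(u,(u(x)-u(y))k_s^{1/2}(x,y)\bigr)$, reduce the problem to completeness, and establish completeness by extracting an a.e.-convergent subsequence and identifying the limit. The only (cosmetic) difference is in the identification step: the paper applies Fatou's lemma twice, whereas you use completeness of $L^2(\R^d\times\R^d)$ and a second a.e.\ subsequence extraction to match the $L^2$-limit with the pointwise limit; both are standard and equally valid.
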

\begin{proof} The proof follows the argumentation in \cite[Theorem
3.1]{Wloka87}.

First we show the completeness of $\HRdk$. 
Let $(f_n)$ be a Cauchy sequence with respect to the norm
$\norm{\cdot}_{\HRdk}$. Set
\[ v_n(x,y) = (f_n(x) - f_n(y)) \sqrt{k_s(x,y)}.\]
Then, by definition of $\norm{\cdot}_{\HRdk}$ and the completeness of
$L^2(\R^d)$, $(f_n)$ converges to some $f$ in the norm of $L^2(\R^d)$. We may
chose a subsequence $f_{n_k}$ that converges a.e. to $f$. Then $v_{n_k}$
converges a.e. on $\R^d \times \R^d$ to the function
\[ v(x,y) = (f(x) - f(y)) \sqrt{k_s(x,y)}\ .\]
By Fatou's Lemma,
\[ \iint\limits_{\R^d\, \R^d} \bet{f(x) - f(y)}^2 k_s(x,y) \d x \d y \leq 
\liminf_{k \to \infty} \iint\limits_{\R^d\, \R^d} \bet{v_{n_k}(x,y)}^2 \d x \d
y \leq \sup_{k \in \N} \norm{v_{n_k}}^2_{L^2(\R^d \times \R^d)}  .\]
Since $(v_n)$ is a Cauchy sequence (and hence bounded) in $L^2(\R^d \times
\R^d)$, this shows that \linebreak[4] $\seminorm{f,f}_{\HRdk} < \infty$,
i.e. $f\in \HRdk$.
Another application of Fatou's Lemma shows that
\begin{align*}
 \seminorm{f_{n_k} - f, f_{n_k}-f}_{\HRdk} &= \iint\limits_{\R^d\, \R^d}
\bet{v_{n_k}(x,y) - v(x,y)}^2 \d x \d y\\
 &\leq \liminf_{l \to \infty} \iint\limits_{\R^d\, \R^d} \bet{v_{n_k}(x,y) -
v_{n_l}(x,y)}^2 \d x \d y \xrightarrow{k \to \infty} 0.
\end{align*}
This shows that $\norm{f_{n_k} - f}_{\HRdk} \to 0$ for $k \to \infty$ for
the subsequence $(n_k)$ chosen above and thus $\norm{f_{n}-f}_{\HRdk} \to
0$ as $n \to \infty$, since $(f_n)$ was assumed to be a Cauchy sequence. The
completeness of $\HRdk$ is proved. The completeness of $H_\Omega(\R^d;k)$ follows immediately.

The mapping $\cI$
\begin{align}
 \cI \colon \HRdk &\to L^2(\Omega) \times L^2(\Omega \times \R^d),
\label{eq:isometric}\\
 \cI(f) &= \left( f, (f(x)-f(y)) \sqrt{k_s(x,y)} \right) ,\nonumber
\end{align}
is isometric due to the definition of the norm in $\HRdk$. Having shown
the completeness of $\HRdk$ we obtain that $\cI(\HRdk)$ is a closed
subspace of the Cartesian product on the right-hand side of
\eqref{eq:isometric}. This product is separable, which implies (cf. \cite[Lemma
3.1]{Wloka87}) the separability of $\HRdk$.

Hence, $\HRdk$ is separable and so is $\HOmRd$ being a subspace
of $\HRdk$.
\end{proof}

\subsection{Variational formulation of the Dirichlet problem}

Define a bilinear form by
\begin{equation}\label{eq:bilinear-k}
 {\cE}^k(u,v)= \iint\limits_{\R^d\,\R^d} (u(x)-u(y)) v(x) k(x,y) \d y \d x\,.
\end{equation}

In order to prove well-posedness of this expression and that the bilinear form
is associated to $\opL$, we need to impose an condition on how the symmetric
part of $k$ dominates the anti-symmetric
part of $k$. We assume that there exist a symmetric
kernel $\widetilde{k}:\R^d\times\R^d\to[0,\infty]$ with $|\{y \in \R^d |\,
\widetilde{k}(x,y) = 0, k_a(x,y) \ne 0\}| = 0$ for all $x$,
and constants $A_1\geq1, A_2\geq1$ such that
\renewcommand{\theequation}{$\widetilde K$}
\begin{subequations}
\label{eq:domination_by_ktilde}
\begin{equation*}\label{eq:ktilde_comparability}
 \iint\limits_{\R^d\,\R^d}(u(x)-u(y))^2\widetilde{k}(x,y)\d x\d y\leq
 A_1 \iint\limits_{\R^d\,\R^d}(u(x)-u(y))^2k_s(x,y)\d x\d y \tag{$\widetilde{K}_1$}
\end{equation*}
for all $u\in H(\R^d;k)$, and at the same
time 
\begin{equation*}
\label{eq:ka-square-domin-by-ktilde}
   \sup_{x\in \R^d}\int\limits\frac{k_a^2(x,y)}{\widetilde{k}(x,y)}\d y\leq
A_2. \tag{$\widetilde{K}_2$}
\end{equation*} 
\end{subequations} 
\setcounter{equation}{3}
\renewcommand{\theequation}{\arabic{section}.\arabic{equation}}

A natural choice is $\widetilde{k}=k_s$ because in this case
\eqref{eq:ktilde_comparability} trivially holds and $\widetilde{k}(x,y) =
0$ implies $k_a(x,y) =0$. Assumption
\eqref{eq:domination_by_ktilde} would then reduce to the condition 
\begin{equation*}
\label{eq:ka-square-domin-by-ks} 
   \sup_{x\in \R^d}\int\limits\limits_{\{k_s(x,y)\neq
0\}} \frac{k_a^2(x,y)}{k_s(x,y)}\d y\leq A\,. \tag{K}
\end{equation*} 
Condition \eqref{eq:ka-square-domin-by-ks} appears in
\cite[(1.1)]{SchillingWang2011} and is sufficient for
that $(\cE, C_c^{0,1}(\R^d))$ extends to a regular lower bounded semi-Dirichlet
form. Note that our assumption \eqref{eq:domination_by_ktilde} is weaker and
thus we can extend \cite[Thm 1.1]{SchillingWang2011}, see
\autoref{lem:Operator-Bilinearform}. In \autoref{sec:examples} we provide an
example illustrating the difference between \eqref{eq:domination_by_ktilde} and
\eqref{eq:ka-square-domin-by-ks}.

Let us show that the bilinear form defined in \eqref{eq:bilinear-k} is
associated to $\opL$ and that the integrand in \eqref{eq:bilinear-k} is -- in
contrast to the integrand in
\eqref{eq:nonlocal-op} -- integrable in the Lebesgue sense.
\begin{lem}
\label{lem:Operator-Bilinearform}
Let $\Omega \subset \R^d$ open and assume that $k$ satisfies
\eqref{eq:integrierbarkeitsbedingung-k_s} and
\eqref{eq:domination_by_ktilde}. Define for $n \in \N$ the set $D_n = \left\{
(x,y) \in \R^d \times \R^d \colon \bet{x-y} >1/n \right\}$ and
\begin{align*}
 \cL_n u(x) &= \int_{\complement B_{1/n}(x)} \left( u(x) -u(y) \right) k(x,y) \d
y,\\
 \cE^k_n(u,v) &= \iint\limits_{D_n} \left( u(x) -u(y) \right) v(x) k(x,y) \d y \d x\,.
\end{align*}
Then we have $\left( \cL_n u,v\right)_{L^2(\R^d)} = \cE^k_n(u,v)$ and
$\lim\limits_{n \to \infty} \cE^k_n(u,v) = \cE^k(u,v)$ for all $u,v \in
C_c^\infty(\Omega)$. Moreover, $\cE^k \colon \HRdk \times \HRdk \to \R$
is continuous. By \eqref{eq:kette-raeume}, $\cE^k$ is also continuous on
$\HOmRd$.
\end{lem}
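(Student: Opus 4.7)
The plan is to decompose $k = k_s + k_a$ and to bound the two bilinear forms $\cE^{k_s}$ and $\cE^{k_a}$ separately. For the symmetric part, interchanging $(x,y)$ and averaging gives the standard identity
\[\cE^{k_s}(u,v) = \tfrac{1}{2}\iint\limits_{\R^d\,\R^d}(u(x)-u(y))(v(x)-v(y))k_s(x,y)\d y\d x = \tfrac{1}{2}\seminorm{u,v}_{\HRdk},\]
so the Cauchy--Schwarz inequality yields $|\cE^{k_s}(u,v)| \le \tfrac{1}{2}\|u\|_{\HRdk}\|v\|_{\HRdk}$. For the anti-symmetric part I would use the comparison kernel $\widetilde{k}$ from \eqref{eq:domination_by_ktilde} as a pivot and apply Cauchy--Schwarz in the $y$-variable,
\[\Bigl|\int_{\R^d}(u(x)-u(y))k_a(x,y)\d y\Bigr|^2 \le \Bigl(\int_{\R^d}(u(x)-u(y))^2 \widetilde{k}(x,y)\d y\Bigr)\Bigl(\int_{\R^d}\frac{k_a(x,y)^2}{\widetilde{k}(x,y)}\d y\Bigr),\]
so that $(\widetilde{K}_2)$ bounds the second factor by $A_2$. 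Multiplying by $|v(x)|$, integrating in $x$ by Cauchy--Schwarz, and then invoking $(\widetilde{K}_1)$ gives $|\cE^{k_a}(u,v)|\le \sqrt{A_1 A_2}\,\|v\|_{L^2(\R^d)}\seminorm{u,u}_{\HRdk}^{1/2}$. Adding both estimates establishes continuity of $\cE^k$ on $\HRdk\times\HRdk$; continuity on $\HOmRd$ is then immediate via \eqref{eq:kette-raeume}.

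With these bounds in hand I would turn to the identity $(\cL_n u,v)_{L^2(\R^d)} = \cE^k_n(u,v)$ for $u,v\in C_c^\infty(\Omega)$. First I verify $C_c^\infty(\Omega)\subset\HRdk$: the elementary bound $(u(x)-u(y))^2\le C_u (1\wedge|x-y|^2)$ with $C_u=\max(\|\nabla u\|_\infty^2,4\|u\|_\infty^2)$, combined with the observation that the integrand vanishes unless $x$ or $y$ lies in the compact set $\supp u$, bounds the seminorm by a constant times the integral in \eqref{eq:integrierbarkeitsbedingung-k_s} taken over a neighbourhood of $\supp u$, which is finite by local integrability. The two continuity estimates above, applied on $D_n$ in place of $\R^d\times\R^d$, show that $(u(x)-u(y))v(x)k(x,y)$ is absolutely integrable over $D_n$, so Fubini yields the desired identity. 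The limit $\cE^k_n(u,v)\to\cE^k(u,v)$ then follows from dominated convergence, since $\mathds{1}_{D_n}\to 1$ pointwise off the diagonal and the full integrand is absolutely integrable by the same bounds.

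The only delicate step is the estimate on the anti-symmetric part: without a comparison kernel one cannot pass to absolute values inside the $y$-integration without losing the cancellation that makes $\cE^k$ well-defined in the first place. The role of $(\widetilde{K}_2)$ is precisely to furnish a pointwise (in $x$) Cauchy--Schwarz estimate whose other factor, after pairing with $(\widetilde{K}_1)$, is controlled by the $\HRdk$-seminorm of $u$. Everything else reduces to standard symmetrization, Fubini and dominated convergence arguments.
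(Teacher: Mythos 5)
Your proof is correct and takes essentially the same route as the paper: split $k=k_s+k_a$, handle the symmetric part via the symmetrization identity and Cauchy--Schwarz, handle the anti-symmetric part via Cauchy--Schwarz through the pivot kernel $\widetilde k$ using \eqref{eq:ktilde_comparability} and \eqref{eq:ka-square-domin-by-ktilde}, then deduce the identity $(\cL_n u,v)_{L^2(\R^d)}=\cE^k_n(u,v)$ on $D_n$ and pass to the limit by dominated convergence. One phrasing caveat: in the last step the integrand $(u(x)-u(y))v(x)k_s(x,y)$ is \emph{not} in general absolutely integrable over $\R^{2d}$, so dominated convergence must be applied separately to the symmetrized $k_s$-piece $\tfrac12(u(x)-u(y))(v(x)-v(y))k_s(x,y)$ and the $k_a$-piece; your bounds give exactly that, but the sentence ``the full integrand is absolutely integrable by the same bounds'' as written is slightly misleading.
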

As mentioned above, our proof is an extension of the proof of \cite[Theorem
1.1]{SchillingWang2011}. 
\begin{proof} 
Assume $u,v\in C^\infty_c(\R^d)$. Splitting $k$ in its symmetric and
antisymmetric part yields
\allowdisplaybreaks
\begin{align*}
  (\cL_n u, v)_{L^2(\R^d)} &= \int\limits_{\R^d} \int\limits_{\complement
B_{1/n}(x)} (u(x)-u(y)) k(x,y)\d y\,\, v(x)\d x \\
 &= \frac{1}{2} \iint\limits_{D_n} (u(x)-u(y)) (v(x)-v(y))
k_s(x,y)\d y \d x \\
 &\qquad + \iint\limits_{D_n} (u(x)-u(y)) v(x) k_a(x,y)\d y \d
x\,. \\
\end{align*}
The first integral is finite due to
\eqref{eq:integrierbarkeitsbedingung-k_s}. 
In order to show the integrability of the second integrand we use
\eqref{eq:domination_by_ktilde} with $A=\max(A_1,A_2)$ and the
Cauchy-Schwarz inequality:
\allowdisplaybreaks
\begin{align*}
\iint\limits_{D_n} &\bet{u(x)-u(y)} \bet{v(x)} \bet{k_a(x,y)} \d y \d x\\
&= \iint\limits_{D_n} \bet{u(x)-u(y)} \bet{v(x)} \widetilde{k}^{1/2}(x,y)
\bet{k_a(x,y)} \widetilde{k}^{-1/2}(x,y)\d y \d x\\
&\leq \left(\iint\limits_{D_n} (u(x)-u(y))^2 \widetilde{k}(x,y)\d x\d
y\right)^{1/2}\left(\int\limits_{\R^d} v(x)^2 \int\limits_{\complement
B_{1/n}(x)}\frac{k_a^2(x,y)}{\widetilde{k}(x,y)}\d y \d x\right)^{1/2}\\
&\leq A  \left(\iint\limits_{D_n} (u(x)-u(y))^2 k_s(x,y)\d x\d y\right)^{1/2}
\|v\|_{L^2(\R^d)}\,.
\end{align*}
This shows $(\opL_n u, v )_{L^2(\R^d)} = {\cE}^k_n(u,v)$ and that all
expressions in this equality are well-defined. 
In particular, by dominated convergence $\lim\limits_{n \to \infty} \cE^k_n(u,v) = \cE^k(u,v)$. Moreover, $\cE^k(u,v) < \infty$ for $u,v \in \HRdk$.

Now let us prove the continuity of $\cE \colon \HRdk \times \HRdk \to
\R$. Let $u,v\in \HRdk$. Again by the symmetry of $k_s$ and by
\eqref{eq:domination_by_ktilde} we obtain
{\allowdisplaybreaks
\begin{align*}
 \bet{{\cE}^k(u,v)}&= \bet{\iint\limits_{\R^d\,\R^d} (u(x)-u(y))v(x)k(x,y) \d
y\d x}\\
 &\leq \bet{\iint\limits_{\R^d\,\R^d} (u(x)-u(y))v(x)k_s(x,y) \d y\d x}\\
 &\qquad +\iint\limits_{\R^d\,\R^d}
\bet{u(x)-u(y)}\widetilde{k}(x,y)^{1/2}\bet{v(x)}\bet{k_a(x,y)}\widetilde{k}^{
-1/2} \d y\d x\\
 &\leq \iint\limits_{\R^d\,\R^d} \bet{u(x)-u(y)}\bet{v(x)-v(y)}k_s(x,y) \d y\d
x\\
 &\qquad +\iint\limits_{\R^d\,\R^d}
\bet{u(x)-u(y)}\widetilde{k}(x,y)^{1/2}\bet{v(x)}\bet{k_a(x,y)}\widetilde{k}^{
-1/2}(x,y) \d y\d x\\
 &\leq \left(\iint\limits_{\R^d\,\R^d} (u(x)-u(y))^2k_s(x,y) \d y\d
x\right)^{1/2}\left(\iint\limits_{\R^d\,\R^d} (v(x)-v(y))^2k_s(x,y) \d y\d
x\right)^{1/2}\\
 &\qquad +A \left(\iint\limits_{\R^d\,\R^d} (u(x)-u(y))^2k_s(x,y) \d y\d
x\right)^{1/2} \|v\|_{L^2(\R^d)}\\
 &\leq C \norm{u}_{\HRdk}\norm{v}_{\HRdk}.
\end{align*}
}
This shows that $\cE^k$ is a continuous bilinear form on $\HRdk$ and on
$\HOmRd$.
\end{proof}
Finally, we are able to provide a variational formulation of the Dirichlet
problem \eqref{Dirichletproblem_einleitung} with the help of
the bilinear form $\cE^k$:
\begin{defi}\label{defi:dirichletproblem} Assume
\eqref{eq:integrierbarkeitsbedingung-k_s} and
\eqref{eq:domination_by_ktilde}. 
Let $\Omega$ be open and bounded, $f \in \HdualOmRd$.
\begin{enumerate}[(i)]
 \item $u \in \HOmRd$ is called a solution of
\begin{equation}\tag{D$_0$} \label{eq:Dirichletproblem-nullrand}
\left\{ 
\begin{aligned}
 {\opL}u & =f \quad\text{ in } \Omega \\
 u & =0 \quad\text{ on } \complement\Omega\,,
\end{aligned} \right. 
\end{equation}
if 
\begin{equation}\label{eq:var-formulation}
   {\cE}^k(u,\varphi)=\ska{f,\varphi} \quad \text{ for all } \varphi
\in \HOmRd\,.
  \end{equation}
\item Let $g \in \VOmk$. A function $u \in \VOmk$ is called a solution of
\begin{equation}\tag{D} \label{eq:Dirichletproblem-randdaten}
\left\{ 
\begin{aligned}
 {\opL}u & =f \quad\text{ in } \Omega \\
 u & =g \quad\text{ on } \complement\Omega\,,
\end{aligned} \right. 
\end{equation}
if $u-g \in \HOmRd$ and \eqref{eq:var-formulation} holds.
\end{enumerate}
\end{defi}

In the subsequent sections we will show how to solve this problem.

\begin{bem}\label{bem:weak-solution}
If $C_c^\infty(\Omega)$ is dense in $\HOmRd$ then $\HdualOmRd$ is a
space of distributions on $\Omega$. In this case solutions in the sense of \autoref{defi:dirichletproblem} are weak solutions to \eqref{eq:Dirichletproblem-nullrand} and \eqref{eq:Dirichletproblem-randdaten}, respectively. 
\end{bem}

\subsection{Poincar\'{e}-Friedrichs inequality}\label{subsec:poincare}

Let us formulate a nonlocal version of the Poincar\'{e}-Friedrichs inequality in
our set-up:

There exists a constant $C_P > 0$ such that for all $u\in\LtwoOm$
\begin{equation*}\label{eq:poincare-type-ineq}\tag{P}
\|u\|^2_{L^2(\Omega)} \leq C_P \iint\limits_{\R^d\,\R^d}(u(x)-u(y))^2k_s(x,y)\d
x\d y \,.
\end{equation*}

This inequality appears as an assumption, explicitly or implicitly, in all of
our existence results. Below, we provide
sufficient conditions on $k_s$ for \eqref{eq:poincare-type-ineq} to hold. Note
that \eqref{eq:poincare-type-ineq} may hold for integrable kernels as
well as for non-integrable kernels $k$, see the table in \autoref{sec:examples}.

The following result generalizes the Poincar\'e-Friedrichs
inequalities from \cite{AksoyluMengesha2010}
and \cite[Prop.
1]{AkPa09}, respectively, to a larger class of integrable and non-integrable
kernels. (In these references, the Poincar\'e-Friedrichs inequality
is stated for functions with values in $\R^d$.) 
\begin{lem}\label{lem:poincare-nonsingular}
Let $\Omega\subset\R^d$ open and bounded and let $k:\R^d\times\R^d\to[0,\infty)$
be
measurable. Assume that there is a symmetric, a.e. nonnegative function $L \in
L^1(\R^d)$ satisfying the following properties: $\bet{ \{L > 0 \}} >0$ and there
is $c_0 > 0$ such that for all $u\in L^2(\Omega)$ 
\begin{equation}
\label{eq:vergleichbarkeit-kern-faltung}
\iint\limits_{\R^d\,\R^d} \left(u(x)-u(y)\right)^2 k(x,y)\d y \d x \geq c_0
\iint\limits_{\R^d\,\R^d} \left(u(x)-u(y)\right)^2 L(x-y) \d y \d x\,. 
\end{equation}
Then the following Poincar\'{e}-Friedrichs inequality holds: There is $C_P =
C_P(\Omega, c_0, L) >0$ such that for all $u\in \HOmRd$
 \begin{equation}\label{eq:poincare-L}
  \|u\|^2_{L^2(\R^d)}\leq C_P \iint\limits_{\R^d\,\R^d} (u(x)-u(y))^2 k(x,y)\d
y\d x\,.
 \end{equation}
\end{lem}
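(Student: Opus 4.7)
By hypothesis \eqref{eq:vergleichbarkeit-kern-faltung}, it suffices to prove the Poincar\'e-type inequality
\[
\|u\|_{L^2(\R^d)}^2 \leq C\, \cE_L(u) \qquad \text{for all } u \in \HOmRd,
\]
where $\cE_\phi(u) := \iint_{\R^d \times \R^d}(u(x)-u(y))^2 \phi(x-y)\,dx\,dy$. The approach has three ingredients: (i) a Fourier comparison bounding $\cE_{L^{*n}}(u)$ by $n a^{n-1}\cE_L(u)$ for every convolution power, where $a = \|L\|_{L^1}$; (ii) an iteration producing, for arbitrarily large $R$, integers $n(R)$ and constants $c(R) > 0$ such that $L^{*n(R)} \geq c(R)\,\mathbf{1}_{B_R}$; (iii) choosing $R > \diam\Omega$ and exploiting that $u$ vanishes off $\Omega$ to extract $\|u\|_{L^2}^2$ from $\cE_{\mathbf{1}_{B_R}}(u)$.

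\textbf{Step (i) (Fourier comparison).} Since $L$ is real, symmetric, and integrable, $\hat L$ is real with $|\hat L| \leq a$. A direct computation (writing $z=x-y$ and using Plancherel in $x$) gives $\cE_\phi(u) = 2\int |\hat u(\xi)|^2 (\hat\phi(0) - \hat\phi(\xi))\,d\xi$ for every real symmetric $\phi \in L^1$. Applying this with $\phi = L^{*n}$ (so $\hat\phi = \hat L^n$), the elementary factorization $a^n - t^n = (a-t)\sum_{j=0}^{n-1}a^{n-1-j}t^j$ together with $|\hat L|\leq a$ yields the pointwise bound $a^n - \hat L(\xi)^n \leq n a^{n-1}(a-\hat L(\xi))$, whence $\cE_{L^{*n}}(u) \leq n a^{n-1}\cE_L(u)$.

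\textbf{Step (ii) (Convolutions cover balls).} Because $|\{L > 0\}| > 0$ and $L$ is symmetric, I pick a symmetric set $E$ with $0 < |E| < \infty$ and $\delta > 0$ such that $L \geq \delta\,\mathbf{1}_E$. Then $L^{*2} \geq \delta^2 (\mathbf{1}_E * \mathbf{1}_E)$, and since $\mathbf{1}_E \in L^2$ the convolution $\mathbf{1}_E * \mathbf{1}_E$ is continuous with value $|E| > 0$ at the origin, so $L^{*2} \geq \eta\,\mathbf{1}_{B_{r_0}}$ for some $\eta, r_0 > 0$. Iterating with $\mathbf{1}_{B_r}*\mathbf{1}_{B_r}$, which is continuous, radially decreasing and strictly positive throughout $B_{2r}$ (intersections of equal balls), I obtain, for every $R > 0$, some $n(R)$ and $c(R)>0$ with $L^{*n(R)}(z) \geq c(R)\,\mathbf{1}_{B_R}(z)$ a.e.

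\textbf{Step (iii) and conclusion.} Fix $R > \diam\Omega$. For $u \in \HOmRd$ the support condition $u = 0$ on $\complement\,\Omega$ gives $(u(x)-u(y))^2 = u(x)^2$ whenever $x \in \Omega$ and $y \notin \Omega$. Moreover $\Omega \subset B_R(x)$ for every $x \in \Omega$ (since $R > \diam\Omega$), so $|B_R(x)\setminus\Omega| = |B_R| - |\Omega| =: V > 0$. Hence
\[
\cE_{\mathbf{1}_{B_R}}(u) \geq \int_\Omega u(x)^2\,|B_R(x)\setminus\Omega|\,dx = V\,\|u\|_{L^2(\R^d)}^2.
\]
Chaining steps (i), (ii) and the hypothesis yields the claim with $C_P = (n a^{n-1})/(c_0 c_R V)$, depending only on $\Omega$, $c_0$, $L$. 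The main obstacle is Step (ii): propagating quantitative pointwise lower bounds through iterated convolutions to cover balls of arbitrary size, starting from the very weak assumption that $\{L>0\}$ merely has positive measure (with $L$ not assumed bounded, continuous, or compactly supported). Steps (i) and (iii) are essentially automatic once (ii) is in place.
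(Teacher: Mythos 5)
Your proof is correct, and it takes a genuinely different route from the paper's. For the key comparison between the energy of $L$ and the energy of its iterated convolutions, the paper relies on the real-variable, localized estimate of \autoref{lem:faltung-L1-kern} (from Dyda--Kassmann): it first truncates $L$ to a ball $B_\rho$, then tracks how the spatial domain $B_R, B_{R+\rho'}, \ldots$ grows under each application of the lemma, and finally compares $E^{L_m}_{B_R}$ to $E^L_{B_{R+m\rho'}} \leq \cE_L$. You instead prove the global inequality $\cE_{L^{*n}}(u) \leq n a^{n-1} \cE_L(u)$ in one stroke via the Plancherel identity $\cE_\phi(u) = 2\int |\hat u|^2 (\hat\phi(0) - \hat\phi(\xi))\,d\xi$ together with the elementary factorization $a^n - t^n \le n a^{n-1}(a-t)$ for $|t|\le a$, which neatly sidesteps both the truncation of $L$ and the domain bookkeeping. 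This is cleaner and yields an explicit constant, but it is tied to the translation-invariance of the comparison kernel $L(x-y)$ and to $u \in L^2(\R^d)$ (both available here); the paper's real-variable lemma is of independent use for kernels restricted to bounded domains. Your Step~(ii), covering $B_R$ by convolution powers of $\delta\mathbf 1_E$ and growing the radius geometrically via $\mathbf 1_{B_r}*\mathbf 1_{B_r} \ge c' \mathbf 1_{B_{3r/2}}$, is the same qualitative mechanism as the paper's observation that $L_m > 0$ on $B_R$ for $m$ large, just made more explicitly quantitative; there is no gap there despite your caution, since passing to a bounded symmetric $E \subset \{L > \delta\}$ immediately reduces to indicator-function convolutions. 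Step~(iii), the extraction of $\|u\|_{L^2}^2$ using $u=0$ off $\Omega$, agrees with the paper's estimate \eqref{eq:E-Lm}.
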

The example in \cite[Remark 6.20]{rossi} shows that \autoref{lem:poincare-nonsingular} fails to hold if one replaces the domain of integration $\R^d \times \R^d$ by $\Omega \times \Omega$ in \eqref{eq:vergleichbarkeit-kern-faltung} and \eqref{eq:poincare-L}.

For the proof of the Poincar\'{e}-Friedrichs inequality, we need the following
technical
Lemma taken from \cite[Lemma 10]{DydaKassmann2011}. 
\begin{lem}
 \label{lem:faltung-L1-kern}
 Let $q\in L^1(\R^d)$ be nonnegative almost everywhere and let $\supp q\subset
B_\rho(0)$ for some $\rho>0$. 
 Then for all $R>0$ and all functions $u$: 
 \begin{equation*}
    \iint\limits_{B_R\,B_R} \left(u(x)-u(y)\right)^2 \left(q*q\right)(x-y) \d
y\d x \leq 4 \|q\|_{L^1(\R^d)} \iint\limits_{B_{R+\rho}\,B_{R+\rho}}
\left(u(x)-u(y)\right)^2 q(x-y) \d y\d x\,.
 \end{equation*}
\end{lem}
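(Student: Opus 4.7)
The plan is to exploit the factorization $(q*q)(x-y)=\int_{\R^d} q(x-w)\,q(w-y)\,\d w$, which introduces an auxiliary point $w$ that lets us interpolate between $u(x)$ and $u(y)$ through $u(w)$. The key elementary inequality will be
\[(u(x)-u(y))^2 \leq 2(u(x)-u(w))^2 + 2(u(w)-u(y))^2,\]
inserted pointwise in the convolution representation. Substituting this into the left-hand side of the claimed inequality yields two triple integrals, and the task reduces to bounding each of them by $2\|q\|_{L^1(\R^d)}$ times the right-hand side.

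For the first triple integral, namely $\iint_{B_R\times B_R}\!\int_{\R^d}(u(x)-u(w))^2 q(x-w)\,q(w-y)\,\d w\,\d x\,\d y$, I would apply Fubini and perform the $y$-integration first. Since $\supp q \subset B_\rho(0)$, the integrand vanishes unless $|w-y|<\rho$, and for each fixed $w$ we have $\int_{B_R} q(w-y)\,\d y \leq \|q\|_{L^1(\R^d)}$. What remains is $\iint_{B_R\times \R^d}(u(x)-u(w))^2 q(x-w)\,\d w\,\d x$, and again by the support condition, $q(x-w)=0$ unless $|x-w|<\rho$, so for $x\in B_R$ the effective range of $w$ lies in $B_{R+\rho}$. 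Enlarging the $x$-domain from $B_R$ to $B_{R+\rho}$ (legal since the integrand is nonnegative) produces exactly the right-hand side of the lemma, with constant $2\|q\|_{L^1(\R^d)}$.

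For the second triple integral involving $(u(w)-u(y))^2$, I would do the symmetric thing: integrate first in $x$ (using $\int_{B_R} q(x-w)\,\d x \leq \|q\|_{L^1(\R^d)}$), then observe that $q(w-y)=0$ forces $w\in B_{R+\rho}$ when $y\in B_R$, and finally enlarge the $y$-domain. Adding the two bounds gives the factor $4\|q\|_{L^1(\R^d)}$, completing the estimate.

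The only subtle point is keeping the support bookkeeping straight — in particular, noting that $q$ need not be symmetric, so one must distinguish between $q(x-w)$ and $q(w-x)$ throughout and not use a cosmetic symmetry argument. Aside from that, the proof is a direct application of the triangle-type inequality for squares, Fubini, and careful use of $\supp q \subset B_\rho(0)$; no density or regularity of $u$ is required beyond measurability making both sides meaningful.
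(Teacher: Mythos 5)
Your argument is correct: the factorization $(q*q)(x-y)=\int_{\R^d} q(x-w)\,q(w-y)\,\d w$, the pointwise bound $(u(x)-u(y))^2\leq 2(u(x)-u(w))^2+2(u(w)-u(y))^2$, Tonelli (everything is nonnegative), the bound $\int_{B_R}q(\cdot)\leq \norm{q}_{L^1(\R^d)}$ for the spare variable, and the support observation forcing $w\in B_{R+\rho}$ together give exactly the constant $4\norm{q}_{L^1(\R^d)}$. The paper itself does not prove this lemma but cites it from Dyda--Kassmann (Lemma 10), where the proof is essentially this same standard argument, so there is nothing to add.
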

\begin{proof}[Proof of \autoref{lem:poincare-nonsingular}]
Let $L$ satisfy the assumptions of the lemma. Without loss of generality $0\in
\Omega$ \linebreak[3] (otherwise shift $\Omega$). Furthermore, we may assume
that there is $\rho>0$ such that $\supp L \subset B_\rho(0)$ (otherwise replace
$L$ by $L\,\Ind{B_\rho(0)}$). Fix $R>0$ such that $\Omega \Subset B_R(0)$. For
$\nu \in \N$ define
\[ L_\nu = \underbrace{L \ast L \ast \ldots \ast L}_{2^\nu \text{ times}} \,.\]
By the properties of $L$ we have  
\[ (L \ast L)(0)=L_1(0)=\int\limits_{\R^d} L(z)L(-z)\d z=\int\limits_{\R^d}
L^2(z)\d z >0\]
and $L_1= L \ast L\in C_b(\R^d)$, which implies that we may find $\delta>0$
(depending on $L$) such that $L_1 >0$ on $B_\delta(0)$. By the property of the
convolution there is $m \in \N$ depending on $L$ and $\Omega$ such that $L_m >
0$ on $B_R(0)$. Let $u \in \HOmRd$. Then we may estimate
\begin{align}
 E^{  L_m}_{B_R}(u,u) &:= \iint\limits_{B_R\,B_R} \left(u(x)-u(y)\right)^2  
L_m(x-y)
\d y\d x \nonumber \\
&\geq \int_{\Omega} u^2(x) \int_{\complement \Omega \cap B_R} L_m(x-y)\d y \d x
\geq C(L,\Omega) \norm{u}^2_{L^2(\R^d)}\,. \label{eq:E-Lm}
\end{align}
Iterated application of
\autoref{lem:faltung-L1-kern} (with $\rho'=2^m \rho$ and $q=L_{j}$,
$j=m-1,\ldots,0$) yields
\begin{equation}\label{eq:LemA2-iteriert}
 E^{L_m}_{B_R}(u,u)\leq 4
\|L_{m-1} \|_{L^1(\R^d)}E^{L_{m-1}}_{B_{R+ \rho'}}(u,u)\leq \ldots 
\leq 4^{m} E^{L}_{B_{R+m\rho'}}(u,u) \prod_{j=0}^{m-1}\|L_i\|_{L^1(\R^d)}\,.
\end{equation} 
\eqref{eq:E-Lm}, \eqref{eq:LemA2-iteriert} and the assumption
\eqref{eq:vergleichbarkeit-kern-faltung} imply
\begin{align*}
 \norm{u}^2_{L^2(\R^d)} &\leq \frac{1}{C(L,\Omega)} E^{L_m}_{B_R}(u,u) \leq
\frac{4^m}{C(L,\Omega)} \prod_{j=0}^{m-1}\|L_i\|_{L^1(\R^d)}
\iint\limits_{\R^d\,\R^d} (u(x)-u(y))^2 L(x-y) \d y \d x \\
 &\leq \frac{4^m}{c_0\, C(L,\Omega)} \prod_{j=0}^{m-1}\|L_i\|_{L^1(\R^d)}
\iint\limits_{\R^d\,\R^d} (u(x)-u(y))^2 k(x,y) \d y \d x\,.
\end{align*}
This finishes the proof of \autoref{lem:poincare-nonsingular}.
\end{proof}
For non-integrable kernels $k$ we have the
following Poincar\'{e}-Friedrichs inequality:
\begin{lem}\label{lem:poincare-singulaer}
 Let $\Omega\subset\R^d$ open and bounded. Let $k:\R^d\times\R^d\to\R$ be measurable and nonnegative almost everywhere. We assume that for some
$\alpha \in (0,2)$, some $\lambda>0$ and all $u\in
L^2(\R^d)$ the kernel $k$ satisfies \eqref{eq:energie-abschaetzung-nach-unten} (see p.~\pageref*{eq:energie-abschaetzung-nach-unten}).
Then there is $C_P>0$ such that for all $u\in L^2_\Omega(\R^d)$
\begin{equation}
\label{Poincare_inequality}
\|u\|_{L^2(\R^d)}\leq C_P \iint\limits_{\R^d\,\R^d}(u(x)-u(y))^2k(x,y)\d y\d
x\,.
\end{equation}
Given $\alpha_0 \in (0,2)$ and $\alpha \in [\alpha_0,2)$, the constant $C_P$
can be chosen independently of $\alpha$. 
\end{lem}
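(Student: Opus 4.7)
The plan is to reduce the inequality for the general kernel $k$ to the corresponding inequality for the model kernel $|x-y|^{-d-\alpha}$ by means of the assumed lower bound \eqref{eq:energie-abschaetzung-nach-unten}, and then to exploit the support condition $u \equiv 0$ on $\complement\,\Omega$ in a very direct way. If the right-hand side of \eqref{Poincare_inequality} is infinite there is nothing to show, so we restrict attention to $u \in L^2_\Omega(\R^d)$ whose Gagliardo-type energy is finite.

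\textbf{Step 1 (reduction to the model kernel).} By \eqref{eq:energie-abschaetzung-nach-unten}, it suffices to prove that there is a constant $C$, independent of $\alpha \in [\alpha_0,2)$, such that
\[
\|u\|_{L^2(\R^d)}^2 \;\leq\; C \iint\limits_{\R^d\,\R^d}(u(x)-u(y))^2 |x-y|^{-d-\alpha} \d y \d x
\]
for every $u \in L^2_\Omega(\R^d)$. The gain over \autoref{lem:poincare-nonsingular} is that we need not iterate convolutions: the singularity of the fractional kernel at the diagonal does the regularization for us, but in fact we will not even use the near-diagonal part.

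\textbf{Step 2 (using the support condition).} Fix $u \in L^2_\Omega(\R^d)$ and put $D=\diam(\Omega)$. Since $u$ vanishes off $\Omega$, the restriction of the double integral to $\{x\in\Omega,\,y\in\complement\,\Omega\}$ gives
\[
\iint\limits_{\R^d\,\R^d}(u(x)-u(y))^2 |x-y|^{-d-\alpha} \d y \d x \;\geq\; 2 \int_\Omega u(x)^2 \int_{\complement\,\Omega} |x-y|^{-d-\alpha} \d y \d x.
\]
For every $x \in \Omega$ we have $\Omega \subset B_D(x)$, hence $\complement\,\Omega \supset \R^d\setminus B_D(x)$, and a direct computation in polar coordinates gives
\[
\int_{\complement\,\Omega} |x-y|^{-d-\alpha} \d y \;\geq\; \int_{\R^d \setminus B_D(x)} |x-y|^{-d-\alpha}\d y \;=\; \frac{|S^{d-1}|}{\alpha D^\alpha}.
\]
Combining these two estimates and \eqref{eq:energie-abschaetzung-nach-unten} yields \eqref{Poincare_inequality} with
\[
C_P \;=\; \frac{\alpha\, D^\alpha}{2\,\lambda\, |S^{d-1}|}.
\]

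\textbf{Step 3 (uniformity in $\alpha$).} For $\alpha \in [\alpha_0,2)$ the prefactor $\alpha D^\alpha$ is bounded by $2\max(D^{\alpha_0}, D^2)$, so $C_P$ admits an upper bound depending only on $\alpha_0$, $\lambda$, $d$ and $\diam(\Omega)$. Provided the constant $\lambda$ in \eqref{eq:energie-abschaetzung-nach-unten} does not degenerate as $\alpha\uparrow 2$ (which is the standard normalization ensuring consistency with the Laplacian limit), this proves the uniform choice of $C_P$. The only subtle point, and the place one has to be careful, is keeping track of the $\alpha$-dependence of both $\lambda$ and the factor $\alpha D^\alpha$; fortunately both quantities are harmless on $[\alpha_0,2)$, so no approximation or iteration procedure as in \autoref{lem:faltung-L1-kern} is needed here.
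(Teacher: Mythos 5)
Your Steps 1 and 2 are a correct and natural proof of the main assertion, and they match what the paper means by ``the proof of the main assertion is simple.'' (One minor point worth stating: the conclusion is phrased in terms of $k$, while \eqref{eq:energie-abschaetzung-nach-unten} is phrased in terms of $k_s$; since $\iint(u(x)-u(y))^2k(x,y)\d x\d y = \iint(u(x)-u(y))^2k_s(x,y)\d x\d y$ for the quadratic form, this is harmless, but you should say it.)

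Step 3, however, is wrong, and the slip is already visible in the constant you report after Step 2. Condition \eqref{eq:energie-abschaetzung-nach-unten} carries the normalizing factor $\lambda\,\alpha(2-\alpha)$, so combining it with your tail estimate
\[
\iint\limits_{\R^d\,\R^d}(u(x)-u(y))^2|x-y|^{-d-\alpha}\d y\d x \geq \frac{2|S^{d-1}|}{\alpha D^\alpha}\,\|u\|^2_{L^2(\R^d)}
\]
gives
\[
\iint\limits_{\R^d\,\R^d}(u(x)-u(y))^2k(x,y)\d y\d x \;\geq\; \frac{2\lambda(2-\alpha)|S^{d-1}|}{D^\alpha}\,\|u\|^2_{L^2(\R^d)},
\]
i.e.\ $C_P = D^\alpha/\bigl(2\lambda(2-\alpha)|S^{d-1}|\bigr)$, not $\alpha D^\alpha/(2\lambda|S^{d-1}|)$. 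This $C_P$ blows up as $\alpha\to 2^-$, and no bound on $\alpha D^\alpha$ or assumption on $\lambda$ repairs this: the divergence comes from the factor $(2-\alpha)$ in \eqref{eq:energie-abschaetzung-nach-unten} meeting an $\alpha$-independent lower bound for the far-field energy. Conceptually, by discarding the near-diagonal part of the kernel you have thrown away exactly the Bourgain--Brezis--Mironescu mechanism that makes the $\alpha(2-\alpha)$-normalized seminorm stable as $\alpha\uparrow 2$. The uniformity of $C_P$ on $[\alpha_0,2)$ is therefore not ``harmless'' at all; it is a genuinely separate statement, and the paper accordingly does not derive it from the elementary estimate but cites \cite[Theorem~1]{Mazya02} for it. Your proof establishes the main inequality for each fixed $\alpha$, but it cannot be upgraded to the uniform statement by the bookkeeping in Step 3.
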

The proof of the main assertion is simple. The statement about the
independence of $C_P$ on $\alpha$ is proved in \cite[Theorem
1]{Mazya02}.

\section{G{\aa}rding inequality and Lax-Milgram Lemma}\label{sec:lax_milgram}

In this section we discuss basic properties of the bilinear form $\cE^k$ which
can be used in order to prove solvability of the Dirichlet problem. First, we
establish a G{\aa}rding inequality under the conditions
\eqref{eq:integrierbarkeitsbedingung-k_s} and \eqref{eq:domination_by_ktilde}.
Then we comment on the sector condition.
In a second subsection we show that if, in addition, the
Poincar\'{e}-Friedrichs inequality and a certain cancellation property hold, the
bilinear form $\cE^k$ is positive definite and coercive. This allows to
establish a first existence result with the help of the well-known Lax-Milgram
Lemma, \autoref{thm:existence-lax-milgram}. 

\subsection{G{\aa}rding inequality}
\begin{lem}[G{\aa}rding inequality]\label{lem:garding}
 Let $k$ satisfy \eqref{eq:integrierbarkeitsbedingung-k_s} and
\eqref{eq:domination_by_ktilde}. Then there is $\gamma =\gamma(A_1,A_2)>0$ such
that
 \begin{equation}
    \label{eq:garding-inequality}
    {\cE}^k(u,u) \geq \frac14 \|u\|^2_{\HRdk} - \gamma \|u\|^2_{L^2(\R^d)}
\qquad \text{for all } u \in \HRdk\,.
 \end{equation} 
\end{lem}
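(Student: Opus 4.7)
The plan is to decompose $k = k_s + k_a$, treat the symmetric part exactly by a symmetrization trick, control the antisymmetric part through the domination hypothesis \eqref{eq:domination_by_ktilde}, and then absorb the resulting mixed term via Young's inequality. All integrability issues are already guaranteed by \autoref{lem:Operator-Bilinearform}, so I may manipulate the integrals freely.

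First, I would write
\[
\cE^k(u,u) = \iint\limits_{\R^d\,\R^d}(u(x)-u(y))u(x)k_s(x,y)\d y\d x + \iint\limits_{\R^d\,\R^d}(u(x)-u(y))u(x)k_a(x,y)\d y\d x =: I_s + I_a .
\]
For $I_s$, the substitution $(x,y)\mapsto(y,x)$ together with $k_s(x,y)=k_s(y,x)$ shows that $I_s$ equals its version with $u(x)$ replaced by $-u(y)$. Averaging the two expressions gives the standard identity
\[
I_s = \tfrac{1}{2}\iint\limits_{\R^d\,\R^d}(u(x)-u(y))^2 k_s(x,y)\d y\d x = \tfrac12\,[u,u]_{\HRdk}.
\]

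For $I_a$, I would insert $\widetilde k^{1/2}\widetilde k^{-1/2}$ (which is legitimate because $\widetilde k(x,y)=0$ forces $k_a(x,y)=0$ by the assumption preceding \eqref{eq:domination_by_ktilde}) and apply Cauchy--Schwarz exactly as in the continuity proof of \autoref{lem:Operator-Bilinearform}:
\[
|I_a| \leq \Bigl(\iint\limits_{\R^d\,\R^d}(u(x)-u(y))^2 \widetilde k(x,y)\d y\d x\Bigr)^{1/2} \Bigl(\int\limits_{\R^d} u(x)^2 \int\limits_{\R^d}\frac{k_a^2(x,y)}{\widetilde k(x,y)}\d y\,\d x\Bigr)^{1/2}.
\]
Using \eqref{eq:ktilde_comparability} on the first factor and \eqref{eq:ka-square-domin-by-ktilde} on the second, this is bounded by $\sqrt{A_1 A_2}\,[u,u]_{\HRdk}^{1/2}\,\|u\|_{L^2(\R^d)}$.

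Combining the two estimates yields
\[
\cE^k(u,u) \geq \tfrac12\,[u,u]_{\HRdk} - \sqrt{A_1 A_2}\,[u,u]_{\HRdk}^{1/2}\,\|u\|_{L^2(\R^d)}.
\]
Young's inequality $ab\leq \tfrac{1}{4}a^2 + b^2$ (with $a=[u,u]_{\HRdk}^{1/2}$ and $b=\sqrt{A_1A_2}\,\|u\|_{L^2(\R^d)}$) absorbs the cross term, giving
\[
\cE^k(u,u) \geq \tfrac14\,[u,u]_{\HRdk} - A_1 A_2\,\|u\|^2_{L^2(\R^d)}.
\]
Finally, since $\|u\|^2_{\HRdk} = \|u\|^2_{L^2(\R^d)} + [u,u]_{\HRdk}$, replacing $\tfrac14 [u,u]_{\HRdk}$ by $\tfrac14\|u\|^2_{\HRdk} - \tfrac14\|u\|^2_{L^2(\R^d)}$ proves \eqref{eq:garding-inequality} with $\gamma = A_1 A_2 + \tfrac14$, which depends only on $A_1,A_2$ as claimed.

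There is no real obstacle here; the only subtlety is choosing the split so that the coefficient in front of $[u,u]_{\HRdk}$ is exactly $\tfrac14$, which is why $ab\leq \tfrac14 a^2 + b^2$ (rather than the symmetric $ab\leq\tfrac12 a^2+\tfrac12 b^2$) is the right version of Young's inequality to apply.
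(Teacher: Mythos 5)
Your proof is correct and takes essentially the same route as the paper's: decompose $k=k_s+k_a$, symmetrize the $k_s$-part, control the $k_a$-part via the $\widetilde k$-domination in \eqref{eq:domination_by_ktilde}, and absorb the cross term by Young's inequality. The only cosmetic difference is that you apply Cauchy--Schwarz globally and then Young once at the level of the two resulting factors, whereas the paper applies Young pointwise inside the double integral (with a free parameter $\varepsilon$) and then uses \eqref{eq:ktilde_comparability} and \eqref{eq:ka-square-domin-by-ktilde} to estimate the two resulting integrals; both yield the same inequality, and your version has the small advantage of producing the explicit constant $\gamma = A_1A_2 + \tfrac14$.
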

\begin{proof}
Let $u\in \HRdk$ and let $A=\max\{A_1,A_2\}$. By
\eqref{eq:domination_by_ktilde} we obtain
\begin{align*}
 {\cE}^k(u,u)&\geq\frac{1}{2}\iint\limits_{\R^d\,\R^d}(u(x)-u(y))^2k_s(x,y)\d
y\d x-\iint\limits_{\R^d\,\R^d}|(u(x)-u(y))u(x)k_a(x,y)|\d y\d x\\
&\geq\frac{1}{2}\iint\limits_{\R^d\,\R^d}(u(x)-u(y))^2k_s(x,y)\d y\d x\\
&\qquad
-\iint\limits_{\R^d\,\R^d}\bet{(u(x)-u(y))\widetilde{k}^{1/2}(x,y)u(x)k_a(x,
y)\widetilde{k}^{-1/2}(x,y)}\d y\d x\\
&\geq\frac{1}{2}\iint\limits_{\R^d\,\R^d}(u(x)-u(y))^2k_s(x,y)\d y\d x \\
&\qquad - \iint\limits_{\R^d\,\R^d} \Big[
\eps\bet{u(x)-u(y)}^2\widetilde{k}(x,y)
+\frac{1}{4\eps}u^2(x) k_a^2(x,y)\widetilde{k}^{-1}(x,y) \Big] \d y\d x\\
&\geq \frac{1}{4}\iint\limits_{\R^d\,\R^d}(u(x)-u(y))^2k_s(x,y)\d y\d x
- \frac{1}{4\eps} A \|u\|^2_{L^2(\R^d)}\\
&\geq \tfrac14 \|u\|^2_{\HRdk}-\gamma \|u\|^2_{L^2(\R^d)}\,,
\end{align*}
if we choose $\eps$ sufficiently small such that $A\eps<\frac{1}{4}$
and then $\gamma=\gamma(A)$ sufficiently large.  
\end{proof}

\begin{bem}\label{bem:sector}
 As the above proof shows, assumption \eqref{eq:ka-square-domin-by-ktilde} is
tailor-made for an estimate which shows that $\cE^{k_a}$ is dominated by $\cE^{k_s}$. Thus, another consequence of
\eqref{eq:integrierbarkeitsbedingung-k_s} and \eqref{eq:domination_by_ktilde} is
the estimate
\begin{align*}
 \bet{\cE^{k_a}(u,v)} &\leq \left( \iint \left( u(x) - u(y) \right)^2
\widetilde{k}(x,y)\, \d x \d y  \right)^{1/2} \, \left( \iint v^2(x)
k_a^2(x,y)\widetilde{k}^{-1}(x,y) \, \d x \d y \right)^{1/2} \\
&\leq \sqrt{A_1} \sqrt{\cE^{k_s}(u,u)} \, \sqrt{A_2} \|v\|_{L^2} 
\end{align*}
for functions $u,v \in H(\R^d;k)$. This observation implies that the bilinear
forms $(\cE^k,H(\R^d;k))$ and  $(\cE^k,H_{\Omega}(\R^d;k))$ satisfy a sector
condition under an additional assumption, see \autoref{prop:sector-condition}
below.
\end{bem}


\subsection{Application of the Lax-Milgram Lemma}
To verify that the bilinear form $\cE^k$ is positive definite, we assume
the following cancellation condition:
\begin{equation*}
 \label{eq:definitheitsbedingung} 
 \inf_{x\in\R^d} \liminf\limits_{\eps\to 0+} \int\limits_{\complement B_\eps
(x)} k_a(x,y)\d
y \geq 0. \tag{C}
\end{equation*}

\begin{bem}
As the proof below shows, assumption
\eqref{eq:definitheitsbedingung} can be relaxed. It is sufficient to assume 
 \[ \inf_{x\in\R^d} \liminf_{\eps\to0+} \int\limits_{\complement B_\eps
(x)} k_a(x,y)\d
y > -\frac{1}{2 C_P} \,,\] 
with $C_P$ as in \eqref{eq:poincare-type-ineq}. The bilinear form
$\cE^k$ would still be coercive. 
\end{bem}
There are many cases for which condition \eqref{eq:definitheitsbedingung} holds.
If $k_a(x,y)$ depends
only on $x-y$, then for every $x \in \R^d$ and every $\eps >0$ one
obtains $\int_{\complement B_\eps
(x)} k_a(x,y)\d y =0$ which trivially implies \eqref{eq:definitheitsbedingung}.
But there are also many interesting cases for which condition
\eqref{eq:definitheitsbedingung} is not satisfied, see \autoref{sec:examples}.
\begin{prop}\label{prop:existence-lax-milgram}
Let $\Omega\subset\R^d$ be open and bounded. Let $f\in
\HdualOmRd$ and let $k$ satisfy \eqref{eq:integrierbarkeitsbedingung-k_s}, \eqref{eq:poincare-type-ineq}, \eqref{eq:domination_by_ktilde} and \eqref{eq:definitheitsbedingung}. Then there is a unique solution $u \in \HOmRd$ to \eqref{eq:Dirichletproblem-nullrand}. 
\end{prop}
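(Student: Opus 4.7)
The plan is to apply the Lax--Milgram Lemma on the separable Hilbert space $\HOmRd$ (\autoref{lem:hilbertraeume}) to the bilinear form $\cE^k$. Continuity of $\cE^k$ on $\HOmRd \times \HOmRd$ is already established in \autoref{lem:Operator-Bilinearform}, and the linear functional $\varphi \mapsto \ska{f,\varphi}$ is continuous on $\HOmRd$ by the very definition of $\HdualOmRd$. The only non-trivial ingredient is coercivity: the existence of $c>0$ with
\[
 \cE^k(u,u) \geq c\,\|u\|_{\HRdk}^2 \qquad \text{for every } u\in \HOmRd.
\]

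The G\aa{}rding inequality \autoref{lem:garding} alone does not suffice, because \eqref{eq:poincare-type-ineq} only controls $\|u\|_{L^2(\R^d)}^2$ by a multiple of the seminorm $[u,u]_{\HRdk}$, which is too weak to absorb the defect $-\gamma\|u\|_{L^2(\R^d)}^2$ for large $\gamma$. Instead I would exploit \eqref{eq:definitheitsbedingung} directly, via a symmetrization of the truncated form $\cE^k_n$ from \autoref{lem:Operator-Bilinearform}. Splitting $k=k_s+k_a$ and then symmetrizing the $k_a$-piece through the swap $(x,y)\leftrightarrow(y,x)$ (using antisymmetry of $k_a$ and the $(x\leftrightarrow y)$-symmetry of $D_n$), one obtains
\begin{align*}
 \cE^k_n(u,u) &= \tfrac12 \iint\limits_{D_n}(u(x)-u(y))^2 k_s(x,y)\,\d y\,\d x \\
 &\quad + \tfrac12 \iint\limits_{D_n}(u(x)-u(y))(u(x)+u(y))\, k_a(x,y)\,\d y\,\d x.
\end{align*}
Since $(u(x)-u(y))(u(x)+u(y))=u^2(x)-u^2(y)$ and $k_a$ is antisymmetric, the second summand collapses to $\int_{\R^d} u^2(x) \int_{\complement B_{1/n}(x)} k_a(x,y)\,\d y\,\d x$.

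Letting $n\to\infty$: the symmetric summand converges monotonically to $\tfrac12[u,u]_{\HRdk}$, and $\cE^k_n(u,u)\to\cE^k(u,u)$ by \autoref{lem:Operator-Bilinearform}, so the antisymmetric summand also converges. Fatou's Lemma together with \eqref{eq:definitheitsbedingung} gives a non-negative limit for this antisymmetric piece, and therefore
\[
 \cE^k(u,u) \geq \tfrac12\,[u,u]_{\HRdk}.
\]
Combining with \eqref{eq:poincare-type-ineq} in the form $\|u\|_{L^2(\R^d)}^2 \leq C_P[u,u]_{\HRdk}$ yields $\|u\|_{\HRdk}^2 \leq (C_P+1)[u,u]_{\HRdk}$, and thus the desired coercivity with $c=\tfrac{1}{2(C_P+1)}$. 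Lax--Milgram then delivers the unique $u\in \HOmRd$ solving \eqref{eq:var-formulation}.

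The main technical obstacle I expect is the rigorous justification of the symmetrization and the passage to the limit: one must check absolute integrability of every piece on $D_n$ so that the swap $(x,y)\leftrightarrow(y,x)$ and the application of Fatou are legitimate. These integrability bounds follow from \eqref{eq:integrierbarkeitsbedingung-k_s} (which makes $\int_{\complement B_{1/n}(x)} k_s(x,y)\,\d y$ locally integrable in $x$) combined with the Cauchy--Schwarz estimate via \eqref{eq:domination_by_ktilde} used in the proof of \autoref{lem:Operator-Bilinearform}; crucially, $u\in \HOmRd$ vanishes outside the bounded set $\Omega$, which localizes the outer integration to a bounded region and thereby makes these bounds effective.
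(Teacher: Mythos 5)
Your proof coincides with the paper's in all essentials: both decompose $k=k_s+k_a$, symmetrize the antisymmetric contribution via the $(x,y)\leftrightarrow(y,x)$ swap so that it collapses to $\int_{\R^d} u^2(x)\int_{\complement B_\eps(x)}k_a(x,y)\,\d y\,\d x$, invoke condition \eqref{eq:definitheitsbedingung} to conclude $\cE^k(u,u)\geq \tfrac12\seminorm{u,u}_{\HRdk}$, upgrade this to coercivity with \eqref{eq:poincare-type-ineq}, and finish with Lax--Milgram; continuity of $\cE^k$ and of $\varphi\mapsto\ska{f,\varphi}$ is taken from \autoref{lem:Operator-Bilinearform} exactly as you do. The only stylistic difference is the passage $\eps\to 0$: you invoke Fatou, which strictly speaking needs a fixed integrable lower bound for $x\mapsto u^2(x)\int_{\complement B_{1/n}(x)}k_a(x,y)\,\d y$ that is not immediately supplied, while the paper simply asserts the nonnegativity of the limit directly from \eqref{eq:definitheitsbedingung} --- both treatments are equally terse here, so this is not a deviation from the paper's argument.
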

\begin{proof}
In \autoref{lem:Operator-Bilinearform} it was shown that $\cE^k$ is a continuous bilinear form on $\HOmRd$. 
First, we show that \eqref{eq:definitheitsbedingung} implies that $\cE^k$ is positive definite. 
Let $u\in \HOmRd$. Observe that $k=k_s + k_a$ and for every $\eps>0$ 
\allowdisplaybreaks
\begin{align*}
 \iint\limits_{\{\bet{x-y}>\eps\}}& (u(x)-u(y))u(y)k_a(x,y)\d y\d
x=\frac{1}{2}\iint\limits_{\{\bet{x-y}>\eps\}}(u(x)-u(y))(u(x)+u(y))k_a(x,y)\d y\d x\\
 &=\frac{1}{2}\iint\limits_{\{\bet{x-y}>\eps\}}(u^2(x)-u^2(y))k_a(x,y)\d y\d x\\
 &=\frac{1}{2}\left(\,\iint\limits_{\{\bet{x-y}>\eps\}} u^2(x)k_a(x,y)\d y\d
x-\iint\limits_{\{\bet{x-y}>\eps\}}u^2(y)k_a(x,y)\d y\d x\right)\\
 &=\int\limits_{\R^d}u^2(x)\int\limits_{\complement B_\eps(x)} k_a(x,y)\d y\d x\,.
\end{align*}
From \eqref{eq:definitheitsbedingung} we obtain
\begin{align*}
\iint\limits_{\R^d\,\R^d}& (u(x)-u(y))u(y)k_a(x,y)\d y\d
x = \lim_{\eps \to 0} \iint\limits_{\{\bet{x-y}>\eps\}}& (u(x)-u(y))u(y)k_a(x,y)\d y\d
x \geq 0\,.
\end{align*}

Hence, 
\begin{equation}
\label{eq:definitheit}
  \cE^k(u,u)=\iint\limits_{\R^d\,\R^d}(u(x)-u(y))u(x)k(x,y)\d y\d x\geq \frac12
\iint\limits_{\R^d\,\R^d}(u(x)-u(y))^2k_s(x,y)\d y\d x\,,
  \end{equation}
i.e. $\cE^k(u,u)\geq 0$ for all $u\in\HOmRd$.
By \eqref{eq:poincare-type-ineq} and \eqref{eq:definitheit} 
\[ \cE^k(u,u)\geq \frac{1}{4 C_P} \|u\|^2_{L^2(\Omega)}
+ \frac{1}{4} \big[u,u\big]_{\HRdk}\geq \frac{1}{4
C_P} \norm{u}^2_{\HRdk}\,,\]
which shows that $\cE(u,u)$ is coercive. 

By the Lax-Milgram Lemma, there is a unique $u$ in $H^{k}_\Omega(\R^d)$,
such that
\begin{equation*}
  {\cE}^k(u,\varphi)=\ska{f,\varphi} \qquad \text{for all } \varphi \in
\HOmRd\,.\qedhere 
\end{equation*}
\end{proof}
Next, we show that the Dirichlet problem with suitable complement data $g$ has
also a unique solution.

\begin{thm}\label{thm:existence-lax-milgram}
Let $\Omega\subset\R^d$ be open and bounded and let $k$ satisfy
\eqref{eq:integrierbarkeitsbedingung-k_s}, \eqref{eq:poincare-type-ineq} and
\eqref{eq:definitheitsbedingung}. Assume further that there is $\widetilde k$
such that 
\begin{equation}\label{eq:ktilde_comparability-omega}
 \iint\limits_{\Omega\,\R^d}(u(x)-u(y))^2\widetilde{k}(x,y)\d x\d y\leq
 A_1 \iint\limits_{\Omega\,\R^d}(u(x)-u(y))^2 k_s(x,y)\d x\d y\,. 
 \tag{\ref*{eq:domination_by_ktilde}$_1^\prime$}
\end{equation}
for all $u\in V(\Omega;k)$ and \eqref{eq:ka-square-domin-by-ktilde} holds
true. Then \eqref{eq:Dirichletproblem-randdaten} has a unique solution $u\in
\VOmk$. 
Moreover,
\begin{equation}\label{eq:energy-estimate-solution}
  \big[u,u\big]_{\VOmk}\leq C\left(
\norm{f}^2_{\HdualOmRd}+\big[g,g\big]_{\VOmk}\right),
 \end{equation}  
 where $C=C(C_P, A_1,A_2)$ is a positive constant. 
\end{thm}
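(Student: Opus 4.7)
The plan is to reduce \eqref{eq:Dirichletproblem-randdaten} to the zero-complement-data problem handled by \autoref{prop:existence-lax-milgram}. Writing $u = w + g$, solving \eqref{eq:Dirichletproblem-randdaten} is equivalent to finding $w \in \HOmRd$ such that
\[
\cE^k(w,\varphi) \;=\; \ska{f,\varphi} - \cE^k(g,\varphi) \qquad \text{for all } \varphi \in \HOmRd.
\]
Two things must be checked: (a) the right-hand side defines a continuous linear functional on $\HOmRd$, and (b) the hypotheses of \autoref{prop:existence-lax-milgram} still hold on $\HOmRd$ under the weaker one-sided assumption \eqref{eq:ktilde_comparability-omega}.

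Step (b) is short. For $u \in \HOmRd$ the integrand $(u(x)-u(y))^2\widetilde{k}(x,y)$ vanishes unless $\{x,y\} \cap \Omega \neq \emptyset$, so by symmetry of $\widetilde{k}$ one has $\iint_{\R^d\times\R^d}(u(x)-u(y))^2\widetilde{k} \leq 2\iint_{\Omega\times\R^d}(u(x)-u(y))^2\widetilde{k}$. Combined with \eqref{eq:ktilde_comparability-omega}, this yields the full condition \eqref{eq:ktilde_comparability} on $\HOmRd$ with constant $2A_1$. This is enough both for the continuity of $\cE^k$ on $\HOmRd$ via \autoref{lem:Operator-Bilinearform} and for the coercivity estimate in the proof of \autoref{prop:existence-lax-milgram} to go through unchanged.

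Step (a) is the main technical point. Since $\varphi$ vanishes off $\Omega$, the integration in $\cE^k(g,\varphi)$ is effectively over $\Omega \times \R^d$, matching the shape of \eqref{eq:ktilde_comparability-omega}. I split $k = k_s + k_a$. For the symmetric part, the identity $\iint_{\R^d\times\R^d}(g(x)-g(y))(\varphi(x)-\varphi(y))k_s = 2\iint_{\Omega\times\R^d}(g(x)-g(y))\varphi(x)k_s$ (a consequence of $\varphi \equiv 0$ outside $\Omega$ and symmetry of $k_s$) followed by Cauchy--Schwarz produces a bound by $\tfrac12 [g,g]_{\VOmk}^{1/2} [\varphi,\varphi]_{\HRdk}^{1/2}$. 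For the antisymmetric part, the multiply-and-divide-by-$\widetilde{k}^{1/2}$ trick from the proof of \autoref{lem:Operator-Bilinearform}, together with \eqref{eq:ktilde_comparability-omega} and \eqref{eq:ka-square-domin-by-ktilde}, gives a bound by $\sqrt{A_1 A_2}\,[g,g]_{\VOmk}^{1/2}\,\|\varphi\|_{L^2(\Omega)}$. Summing, $|\cE^k(g,\varphi)| \leq C\,[g,g]_{\VOmk}^{1/2}\,\|\varphi\|_{\HRdk}$.

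Having (a) and (b), \autoref{prop:existence-lax-milgram} yields a unique $w \in \HOmRd$ solving the reduced problem, so $u := w + g \in \VOmk$ solves \eqref{eq:Dirichletproblem-randdaten}; uniqueness of $u$ follows because two solutions would differ by an element of $\HOmRd$ annihilated by $\cE^k$, hence zero by positive-definiteness. For the energy estimate, testing the reduced equation with $\varphi = w$ and using the coercivity $\cE^k(w,w) \geq c\,\|w\|_{\HRdk}^2$ from \autoref{prop:existence-lax-milgram} together with Young's inequality yields $\|w\|_{\HRdk}^2 \leq C\bigl(\|f\|_{\HdualOmRd}^2 + [g,g]_{\VOmk}\bigr)$; then $[u,u]_{\VOmk} \leq 2[w,w]_{\VOmk} + 2[g,g]_{\VOmk} \leq 2\|w\|_{\HRdk}^2 + 2[g,g]_{\VOmk}$ delivers \eqref{eq:energy-estimate-solution}. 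The main obstacle is step (a): because $g$ only lies in $\VOmk$ and not in $\HRdk$, the continuity bound of \autoref{lem:Operator-Bilinearform} is not directly available, and the one-sided assumption \eqref{eq:ktilde_comparability-omega} is precisely tailored to the situation where one integration variable is forced into $\Omega$ by the test function.
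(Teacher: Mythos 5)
Your proof is essentially the paper's: reduce to the zero-data problem for $w = u - g$, show $\cE^k(g,\cdot)$ is continuous on $\HOmRd$ by the split $k = k_s + k_a$ and the weighted Cauchy--Schwarz trick, then invoke \autoref{prop:existence-lax-milgram} and derive the energy estimate by testing with $w$. Your step (b) is a useful explicit justification (left implicit in the paper) of why the one-sided condition \eqref{eq:ktilde_comparability-omega} on $\Omega\times\R^d$ suffices to run the coercivity argument of \autoref{prop:existence-lax-milgram} on $\HOmRd$: the symmetrization $\iint_{\R^d\times\R^d} \leq 2\iint_{\Omega\times\R^d}$ for functions vanishing on $\complement\Omega$ restores \eqref{eq:ktilde_comparability} with constant $2A_1$ there.

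One step is stated loosely. For the $k_s$ part you write the identity $\iint_{\R^d\times\R^d}(g(x)-g(y))(\varphi(x)-\varphi(y))k_s = 2\iint_{\Omega\times\R^d}(g(x)-g(y))\varphi(x)k_s$ (correct) and then claim a bound ``by Cauchy--Schwarz'' of the form $[g,g]_{\VOmk}^{1/2}[\varphi,\varphi]_{\HRdk}^{1/2}$. Applying Cauchy--Schwarz to the right-hand side as written would split off the factor $\bigl(\int_\Omega \varphi^2(x)\int k_s(x,y)\,\d y\,\d x\bigr)^{1/2}$, which is not controlled for non-integrable $k_s$. You should keep the symmetric form: bound $\tfrac12|\iint_{\R^d\times\R^d}(g(x)-g(y))(\varphi(x)-\varphi(y))k_s|$ by $\iint_{\Omega\times\R^d}|g(x)-g(y)||\varphi(x)-\varphi(y)|k_s$ (the restriction to $\Omega\times\R^d$ costs a factor $2$ since the integrand is nonzero only when $x\in\Omega$ or $y\in\Omega$, and the two halves agree by symmetry of $k_s$), and then apply Cauchy--Schwarz to the pair $(g(x)-g(y))k_s^{1/2}$, $(\varphi(x)-\varphi(y))k_s^{1/2}$ over $\Omega\times\R^d$. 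This gives $I_1 \leq [g,g]_{\VOmk}^{1/2}[\varphi,\varphi]_{\HRdk}^{1/2}$, matching the paper; the factor $\tfrac12$ you quote is not there, but this is immaterial. With that repair your argument coincides with the paper's.
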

\begin{proof}
To prove the theorem we show that under the above assumptions on $g$ the problem \eqref{eq:Dirichletproblem-randdaten} 
can be transformed into a problem of the form \eqref{eq:Dirichletproblem-nullrand}. If $\widetilde{u}\in\HOmRd$ is a solution to 
\begin{equation}\label{eq:solution-reduced-boundary-data}
\left\{
\begin{array}{rll}
 \opL \widetilde u=& f - \opL g \quad &\text{in } \Omega \\
\widetilde{u}=&0 &\text{on }\complement\Omega\,
\end{array} \right.
\end{equation}
then $u=\widetilde{u}+g$ belongs to $\VOmk$ and solves \eqref{eq:Dirichletproblem-randdaten}. 
In order to apply \autoref{prop:existence-lax-milgram} to
\eqref{eq:solution-reduced-boundary-data} it remains to show that
$\opL g = {\cE}^k(g,\cdot) \in \HdualOmRd$. We have

\begin{align*}
  |{\cE}&^k(g,\varphi)|=\left|\,\iint\limits_{\R^d
\,\R^d}(g(x)-g(y))\varphi(x)k(x , y)\d y\d x\right|\\
&\leq \frac{1}{2}
\left|\,\iint\limits_{\R^d \,\R^d}(g(x)-g(y))(\varphi(x)-\varphi(y))k_s(x,y)\d
x\d y\right| +\left|\,\iint\limits_{\R^d \,\R^d}(g(x)-g(y))\varphi(x)k_a(x,y)\d
x\d y\right|\\
&=: I_1+I_2\,.
\end{align*}

Since
$\varphi =0$ a.e. on $\complement \Omega$ an application of the Cauchy-Schwarz inequality yields
\begin{align*}
  I_1&\leq \left(\iint\limits_{\Omega\,\R^d}(g(x)-g(y))^2k_s(x,y)\d y\d
x\right)^{1/2}\left( \iint\limits_{\Omega\,\R^d}
(\varphi(x)-\varphi(y))^2k_s(x,y)\d y\d x\right)^{1/2}\\
&=\big[g,g\big]^{1/2}_{\VOmk}\big[\varphi, \varphi
\big]^{1/2}_{\HRdk}\,.
\end{align*}
The term $I_2$ can be estimated as follows: By \eqref{eq:ktilde_comparability-omega} and \eqref{eq:ka-square-domin-by-ktilde}  
\begin{align*}
  I_2&\leq
\iint\limits_{\Omega\,\R^d}\left|(g(x)-g(y))\right|\widetilde{k}^{1/2}(x,
y)\left|\varphi(x)\right|\bet{k_a(x,y)}\widetilde{k}^{-1/2}(x,y)\d x\d y\\
&\leq\left(\,\iint\limits_{\Omega\,\R^d}(g(x)-g(y))^2\widetilde{k}(x,y)\d x\d
y\right)^{1/2}\left(\,\iint\limits_{\Omega\,\R^d}\varphi^2(x)\frac{k_a^2(x,y)}{
\widetilde{k}(x,y)}\d x\d y\right)^{1/2}\\
&\leq A_1^{1/2}A_2^{1/2} \big[g,g\big]^{1/2}_{\VOmk}\|\varphi\|_{L^2(\Omega)}\,.
\end{align*}
This shows the continuity of $\cE(g,\cdot) \colon \HOmRd \to \R$ and hence \eqref{eq:solution-reduced-boundary-data}
has a unique solution $\widetilde{u}\in \HOmRd$. 

In order to prove estimate \eqref{eq:energy-estimate-solution} we 
apply $\widetilde{u} \in \HOmRd$ as test function and obtain
\begin{multline*}
  \langle f, \widetilde{u}\rangle_{\HdualOmRd} + {\cE}^k(g,\widetilde{u})={\cE}^k(\widetilde{u},\widetilde{u})=\iint\limits_{\R^d\,\R^d}(\widetilde{u}(x)-\widetilde{u}(y))\widetilde{u}(x)k(x,y)\d x\d y\\
 =\iint\limits_{\R^d\,\R^d}(\widetilde{u}(x)-\widetilde{u}(y))\widetilde{u}(x)k_s(x,y)\d x\d
y+\iint\limits_{\R^d\,\R^d}(\widetilde{u}(x)-\widetilde{u}(y))\,\widetilde{u}(x) k_a(x,y)\d x\d y\, ,
\end{multline*}
where the second term on the right-hand side is non-negative due to
\eqref{eq:definitheitsbedingung}. 
Hence,
\begin{multline*}
 \frac{1}{2}\iint\limits_{\R^d\,\R^d}(\widetilde{u}(x)-\widetilde{u}(y))^2k_s(x,y)\d x\d y \leq \ska{
f,\widetilde{u}}_{\HdualOmRd} + \cE^k(g,\widetilde{u}) \\
 \leq \norm{f}_{\HdualOmRd} \norm{\widetilde{u}}_{\HRdk} +\!
\iint\limits_{\R^d\,\R^d} (g(x)-g(y))\widetilde{u}(x) k_s(x,y)\d x\d y \\+
\iint\limits_{\R^d\,\R^d} (g(x)-g(y))\widetilde{u}(x) k_a(x,y)\d x\d y\,.
\end{multline*}
The Young inequality and the fact that $v = 0$ a.e. on $\complement \Omega$
imply for $\eps>0$, to be specified later,
\begin{multline*}
 \iint\limits_{\R^d\,\R^d} (g(x)-g(y))\widetilde{u}(x) k_s(x,y)\d x\d y \leq
\iint\limits_{\Omega\,\R^d} \bet{g(x)-g(y)} \bet{\widetilde{u}(x)-\widetilde{u}(y)} k_s(x,y) \d x \d y
\\
\leq \frac{1}{4\epsilon} \iint\limits_{\Omega\,\R^d} \bet{g(x)-g(y)}^2 k_s(x,y)
\d y \d x + \eps \iint\limits_{\R^d\,\R^d} \bet{\widetilde{u}(x)-\widetilde{u}(y)}^2 k_s(x,y) \d x \d
y\,.
\end{multline*}
Similarly, using \eqref{eq:ktilde_comparability-omega} and
\eqref{eq:ka-square-domin-by-ktilde} we obtain
\begin{align*}
 \iint\limits_{\R^d\,\R^d}& (g(x)-g(y))\widetilde{u}(x) k_a(x,y)\d x\d y \leq
\iint\limits_{\Omega\,\R^d} \bet{g(x)-g(y)} \bet{\widetilde{u}(x)} \bet{k_a(x,y)} \d y \d x
\\
&\leq \frac{A}{4\epsilon} \iint\limits_{\Omega\,\R^d} \bet{g(x)-g(y)}^2
\widetilde{k}(x,y) \d y \d x + \frac{\eps}{A} \iint\limits_{\R^d\,\R^d}
\widetilde{u}^2(x) \frac{k^2_a(x,y)}{\widetilde{k}(x,y)}  \d y \d x \\
&\leq \frac{A^2}{4\epsilon} \iint\limits_{\Omega\,\R^d} \bet{g(x)-g(y)}^2
k_s(x,y) \d y \d x + \eps \|\widetilde{u}\|^2_{L^2(\Omega)} 
\,.
\end{align*}
Altogether we obtain
\begin{align*}
\iint\limits_{\R^d\,\R^d} &(\widetilde{u}(x)-\widetilde{u}(y))^2k_s(x,y)\d x\d y 
 \leq 2 \norm{f}_{\HdualOmRd} \norm{\widetilde{u}}_{\HRdk} \\
&\quad + \frac{1}{2\epsilon} \iint\limits_{\Omega\,\R^d} \bet{g(x)-g(y)}^2
k_s(x,y)
\d y \d x + 2 \eps \iint\limits_{\R^d\,\R^d} \bet{\widetilde{u}(x)-\widetilde{u}(y)}^2 k_s(x,y) \d x
\d y \\
&\quad+ \frac{A^2}{2\epsilon} \iint\limits_{\Omega\,\R^d} \bet{g(x)-g(y)}^2
k_s(x,y) \d y \d x + 2 \eps \|\widetilde{u}\|^2_{L^2(\Omega)}\\ 
& \leq \frac{1}{2\eps} \norm{f}^2_{\HdualOmRd} 
+ \left(\frac{A^2 +1}{2\epsilon} \right)  \big[g,g\big]_{\VOmk} + 4 \eps
\|\widetilde{u}\|^2_{\HRdk} \,.
\end{align*}

Applying the Poincar\'e-Friedrichs inequality \eqref{eq:poincare-type-ineq}, and
choosing
$\eps = \frac{1}{16 C_P}$ we deduce
\begin{equation*}
\iint\limits_{\R^d\,\R^d}(\widetilde{u}(x)-\widetilde{u}(y))^2k_s(x,y)\d x\d y\leq
c_1 \norm{f}^2_{\HdualOmRd}  + c_2 \big[g,g\big]_{\VOmk}\,,
\end{equation*}
where $c_1\geq 1$ depends on $C_P$ and $c_2\geq 1$ depends on $A$ and $C_P$.
Since $\HOmRd\subset\VOmk$ and $u=\widetilde{u}+g$ the assertion \eqref{eq:energy-estimate-solution} follows. \qedhere
\end{proof}

As mentioned in \autoref{bem:sector}, condition \eqref{eq:definitheitsbedingung} ensures the
sector condition in the sense of \cite{MaRo92}:

\begin{prop}\label{prop:sector-condition}
 Let $k$ satisfy \eqref{eq:integrierbarkeitsbedingung-k_s}, 
\eqref{eq:domination_by_ktilde} and \eqref{eq:definitheitsbedingung}. Then the
bilinear forms
$(\cE^k,H(\R^d;k))$ and  $(\cE^k,H_{\Omega}(\R^d;k))$ satisfy the weak
sector condition. If, in addition, $k$ satisfies
\eqref{eq:poincare-type-ineq},
then the bilinear form $(\cE^k,H_{\Omega}(\R^d;k))$ satisfies the strong sector
condition. 
\end{prop}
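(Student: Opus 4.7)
The plan is to combine two Cauchy--Schwarz-type estimates already available in the text with the key consequence of \eqref{eq:definitheitsbedingung}. By Cauchy--Schwarz on the symmetric part, $\bet{\cE^{k_s}(u,v)}\leq \cE^{k_s}(u,u)^{1/2}\cE^{k_s}(v,v)^{1/2}$, while by the computation displayed in \autoref{bem:sector} the anti-symmetric part satisfies
\begin{equation*}
 \bet{\cE^{k_a}(u,v)}\leq \sqrt{A_1 A_2}\,\cE^{k_s}(u,u)^{1/2}\norm{v}_{L^2(\R^d)}
\end{equation*}
for all $u,v\in\HRdk$. The third ingredient, which is where \eqref{eq:definitheitsbedingung} enters, is the upper bound $\cE^{k_s}(u,u)\leq 2\,\cE^k(u,u)$. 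This is essentially already contained in the proof of \autoref{prop:existence-lax-milgram}: rewrite $u(x)(u(x)-u(y))=\tfrac12(u(x)-u(y))^2 + \tfrac12(u(x)^2-u(y)^2)$, split $k=k_s+k_a$, use symmetry to discard the vanishing cross terms, and apply \eqref{eq:definitheitsbedingung} to the remaining integral $\int u^2(x)\int_{\complement B_\eps(x)} k_a(x,y)\d y\d x$.

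For the weak sector condition on $\HRdk$, and hence on $\HOmRd\subset\HRdk$, I would fix $\alpha>0$ and pass to the shifted form $\cE^k_\alpha(u,v):=\cE^k(u,v)+\alpha\,(u,v)_{L^2(\R^d)}$, using $\norm{v}_{L^2(\R^d)}^2\leq \alpha^{-1}\cE^k_\alpha(v,v)$ in the anti-symmetric estimate above. Combined with $\cE^{k_s}(u,u)\leq 2\cE^k(u,u)\leq 2\cE^k_\alpha(u,u)$ and the trivial bound $\alpha\bet{(u,v)_{L^2}} \leq \cE^k_\alpha(u,u)^{1/2}\cE^k_\alpha(v,v)^{1/2}$ for the shift term, this produces a constant $K_\alpha$ depending only on $A_1,A_2,\alpha$ such that
\begin{equation*}
 \bet{\cE^k_\alpha(u,v)} \leq K_\alpha\,\cE^k_\alpha(u,u)^{1/2}\cE^k_\alpha(v,v)^{1/2}\,,
\end{equation*}
which is the weak sector condition in the sense of \cite{MaRo92}.

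For the strong sector condition on $\HOmRd$ the Poincar\'{e}--Friedrichs inequality \eqref{eq:poincare-type-ineq} makes the shift unnecessary: for $v\in\HOmRd\subset L^2_\Omega(\R^d)$ one has $\norm{v}_{L^2(\R^d)}^2 \leq C_P\,\cE^{k_s}(v,v) \leq 2C_P\,\cE^k(v,v)$. Substituting this into the mixed bound above, and estimating the symmetric part via $\cE^{k_s}(\cdot,\cdot)^{1/2}\leq \sqrt{2}\,\cE^k(\cdot,\cdot)^{1/2}$, gives
\begin{equation*}
 \bet{\cE^k(u,v)} \leq \bigl(2 + 2\sqrt{A_1 A_2 C_P}\bigr)\cE^k(u,u)^{1/2}\cE^k(v,v)^{1/2}
\end{equation*}
for all $u,v\in\HOmRd$, which is the strong sector condition.

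The only substantive step is the inequality $\cE^{k_s}(u,u)\leq 2\,\cE^k(u,u)$; everything else is algebraic repackaging via Cauchy--Schwarz. I do not anticipate any serious obstacle---the proposition essentially records the observation already made in \autoref{bem:sector}, namely that \eqref{eq:domination_by_ktilde} forces the anti-symmetric part to be subordinate to the symmetric one, in precisely the form required by the sector conditions. The structural role of the three hypotheses is also transparent from this argument: \eqref{eq:domination_by_ktilde} controls $\cE^{k_a}$, \eqref{eq:definitheitsbedingung} supplies the positivity needed for $\cE^k(u,u)^{1/2}$ to appear on the right-hand side, and \eqref{eq:poincare-type-ineq} upgrades the weak statement to the strong one by absorbing the residual $L^2$-norm.
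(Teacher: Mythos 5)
Your argument is correct and supplies exactly the proof the paper leaves implicit: \autoref{prop:sector-condition} is stated without a proof environment, and its intended derivation is the one you give, combining the Cauchy--Schwarz estimate for $\cE^{k_a}$ from \autoref{bem:sector} with the positivity consequence of \eqref{eq:definitheitsbedingung} established in the proof of \autoref{prop:existence-lax-milgram}, namely $\cE^{k_s}(u,u)\leq\cE^k(u,u)$, which converts the $\cE^{k_s}$-factors and (via \eqref{eq:poincare-type-ineq} or the shift $\alpha$) the $L^2$-norm into the required $\cE^k$- resp.\ $\cE^k_\alpha$-factors. The various factors of $2$ you carry are mildly wasteful --- e.g.\ $\cE^{k_s}(u,u)\leq\cE^k(u,u)$ already holds without the $2$, since $\cE^{k_s}(u,u)=\tfrac12\seminorm{u,u}_{\HRdk}$ and the $\cE^{k_a}(u,u)$-contribution is nonnegative --- but this only changes the constants in the sector bounds, not their validity.
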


\section{Weak maximum principle and Fredholm
alternative}\label{sec:uniqueness-fredholm}
The goal in this section is to prove existence and uniqueness when the bilinear form $\cE^k$ is no longer positive definite. To this end we establish a weak maximum principle implying that the homogeneous equation has only the trivial solution. Then we apply Fredholm's alternative.
\subsection{Weak maximum principle}
In this subsection we prove a weak maximum principle when the kernels $k$
exhibit a non-integrable singularity at the diagonal. We need to impose two
different assumptions on the class of admissible kernels.

We assume that for some $\alpha \in (0,2)$, some $\lambda>0$ and all $u\in
L^2(\R^d)$ the estimate  
\begin{equation}\label{eq:energie-abschaetzung-nach-unten}\tag{E$_\alpha$}
 \iint\limits_{\R^d\,\R^d} (u(x)-u(y))^2k_s(x,y) \d y\d x \geq \lambda \
\alpha (2-\alpha) \iint\limits_{\R^d\,\R^d}
\frac{(u(x)-u(y))^2}{|x-y|^{d+\alpha}} \d y\d x
\end{equation}
holds true. Further, we assume that there is $D>1$ such that for almost every
$x,y\in\R^d$ \begin{equation}
\label{eq:sym-echt-groesser-k-a}
\bet{k_a(x,y)} \leq D^{-1} k_s(x,y) 
\end{equation}

Condition \eqref{eq:energie-abschaetzung-nach-unten} requires some minimal
singularity of $k_s$ at the diagonal. It is not restrictive when considering
non-integrable kernels. Concerning
condition \eqref{eq:sym-echt-groesser-k-a} note that, by definition, the
inequality $\bet{k_a(x,y)} \leq k_s(x,y)$ holds  for almost every
$x,y\in\R^d$. Condition \eqref{eq:sym-echt-groesser-k-a} is satisfied by
several examples, e.g for 
\[k(x,y)=|x-y|^{-d-\alpha}+g(x,y) \mathbbm{1}_{B_1}(x-y)|x-y|^{-\d-\beta} \,,\]
if $0<\beta <\alpha/2$ and $\|g\|_\infty \leq \frac12$, cp. Example
\eqref{exa:singular-nonsymmetric} in \autoref{sec:examples}. But there are
also examples which violate the condition, e.g. $k(x,y) =
|x-y|^{-d-\alpha}\mathds{1}_{\R^d_+}(x-y)$, cp.
Example \eqref{exa:standard-singular-nonsymmetric}. 

Under the above conditions we can prove the
following weak maximum principle: 
\begin{thm} \label{thm:weak-maximum}
Let $k$ satisfy  \eqref{eq:integrierbarkeitsbedingung-k_s},
\eqref{eq:ka-square-domin-by-ks}, \eqref{eq:energie-abschaetzung-nach-unten},
\eqref{eq:sym-echt-groesser-k-a}. 
Let $u\in \HOmRd$ satisfy 
\begin{align}\label{eq:def-u-subs}
{\cE}^k(u,\varphi)\leq 0   \quad \text{ for all } \varphi\in
 \HOmRd \,. 
\end{align}
Then $\sup_{\Omega} u\leq 0$.
\end{thm}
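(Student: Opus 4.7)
I would take $\varphi = u^+ := \max(u,0)$ as the natural test function; it lies in $\HOmRd$ since $|u^+(x) - u^+(y)| \leq |u(x) - u(y)|$ pointwise and $u \equiv 0$ on $\complement \Omega$ forces $u^+ \equiv 0$ there. Using the pointwise identity $(u(x) - u(y)) u^+(x) = (u^+(x) - u^+(y)) u^+(x) + u^-(y) u^+(x)$ (valid since $u^+ \cdot u^- \equiv 0$), the hypothesis yields
\[
0 \geq \cE^k(u, u^+) \;=\; \cE^k(u^+, u^+) + \iint\limits_{\R^d\,\R^d} u^-(y) u^+(x) k(x,y) \d y \d x.
\]
The strict bound \eqref{eq:sym-echt-groesser-k-a} with $D > 1$ gives $k \geq (1-D^{-1}) k_s \geq 0$ pointwise, so the cross term is non-negative and therefore $\cE^k(u^+, u^+) \leq 0$.

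\textbf{G{\aa}rding-type lower bound.} Next, I would bound $\cE^k(u^+, u^+)$ from below by splitting $k = k_s + k_a$. By symmetry of $k_s$ one has $\cE^{k_s}(u^+, u^+) = \tfrac12 \iint (u^+(x)-u^+(y))^2 k_s$, and symmetrizing the antisymmetric piece produces $2\cE^{k_a}(u^+, u^+) = \iint ((u^+(x))^2 - (u^+(y))^2) k_a$. Cauchy--Schwarz using the factorization $k_a = k_s^{1/2} \cdot k_a k_s^{-1/2}$, combined with \eqref{eq:ka-square-domin-by-ks} and $(u^+(x)+u^+(y))^2 \leq 2((u^+(x))^2 + (u^+(y))^2)$, would give $|\cE^{k_a}(u^+, u^+)| \leq A^{1/2} \|u^+\|_{L^2} \bigl(\iint (u^+(x)-u^+(y))^2 k_s\bigr)^{1/2}$. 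Young's inequality then yields
\[
\cE^k(u^+, u^+) \geq \tfrac14 \iint\limits_{\R^d\,\R^d} (u^+(x)-u^+(y))^2 k_s(x,y) \d y \d x - A\|u^+\|^2_{L^2(\R^d)}.
\]

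\textbf{Closing via fractional Sobolev on the support.} Together with $\cE^k(u^+, u^+) \leq 0$ this forces $\iint (u^+(x) - u^+(y))^2 k_s \leq 4A \|u^+\|_{L^2}^2$. On the other hand, \eqref{eq:energie-abschaetzung-nach-unten} controls the left-hand side from below by a multiple of the homogeneous $H^{\alpha/2}$-seminorm of $u^+$, and the standard fractional Sobolev embedding $\dot H^{\alpha/2}(\R^d) \hookrightarrow L^{2^*}(\R^d)$, $2^* = 2d/(d-\alpha)$, followed by H\"older over the support $\{u > 0\} \subset \Omega$, delivers $\|u^+\|_{L^2}^2 \leq C |\{u>0\}|^{\alpha/d} \iint (u^+(x)-u^+(y))^2 k_s$. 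Chaining these estimates forces $|\{u>0\}| \geq \delta > 0$ for a universal $\delta$ as soon as $u^+ \not\equiv 0$. The very same reasoning, applied to every truncation $v_\tau := (u-\tau)^+ \in \HOmRd$ (which again satisfies $\cE^k(v_\tau, v_\tau) \leq 0$ by the decomposition of the first paragraph), produces the uniform lower bound $|\{u > \tau\}| \geq \delta$ for every $\tau \in [0, \operatorname{ess\,sup}_\Omega u)$. A De~Giorgi-type iteration on a dyadic chain $\tau_k \nearrow \operatorname{ess\,sup}_\Omega u$ then sharpens this to $|\{u > \tau_k\}| \to 0$, contradicting the uniform bound unless $\operatorname{ess\,sup}_\Omega u \leq 0$.

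\textbf{Main obstacle.} The crux is that the G{\aa}rding inequality fails to be outright coercive: the seminorm term does not beat the zeroth-order loss $A\|u^+\|_{L^2}^2$ via the Poincar\'e inequality \eqref{Poincare_inequality} alone, since $C_P$ may be arbitrarily large for a fixed bounded $\Omega$. The trick is that the Sobolev--Poincar\'e inequality coming from \eqref{eq:energie-abschaetzung-nach-unten} improves with the measure of $\{u > 0\}$; restricting to level sets of small measure eventually recovers the missing coercivity, and the De~Giorgi iteration converts this qualitative improvement into the desired contradiction. Making the level-set iteration propagate cleanly, using only the integrated (not pointwise) consequence of \eqref{eq:energie-abschaetzung-nach-unten}, is the technically delicate step.
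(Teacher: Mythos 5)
Your Step~1 --- testing with $(u-\tau)^+$, dropping the sign-definite cross term, running Cauchy--Schwarz with~\eqref{eq:ka-square-domin-by-ks}, and then using~\eqref{eq:energie-abschaetzung-nach-unten} with Sobolev and H\"older to get the uniform bound $|\{u>\tau\}|\geq\delta$ for all $\tau<\sup_\Omega u$ --- is correct and is essentially identical to the first half of the paper's argument. You have correctly identified the main obstacle (lack of coercivity).

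However, your claimed Step~2 is a genuine gap. You assert, without carrying it out, that a De~Giorgi iteration forces $|\{u>\tau_k\}|\to 0$, contradicting the uniform lower bound. But the Caccioppoli-type estimate you have available, namely $\cE^{k_s}(v_n,v_n)\leq 4A\|v_n\|_{L^2}^2$ with $v_n=(u-\tau_n)^+$, sits at exactly the critical scaling: combined with Sobolev and H\"older it gives precisely $|A_n|\geq(4AC)^{-d/\alpha}=:\delta$ for each $n$, and if one tracks the constants through the standard dyadic iteration $\tau_n=M(1-2^{-n})$ one finds $a_{n+1}\leq (16AC)^{d/(d-\alpha)}a_n^{d/(d-\alpha)}$, whose smallness threshold is $(16AC)^{-d/\alpha}<\delta$. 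In other words, the iteration can only start to contract from a smaller measure than Step~1 guarantees is always present. There is no contradiction, and the iteration is stuck exactly at the point where Step~1 leaves you. The level sets $\{u>\tau\}$ shrink only to $\{u\geq M\}$, which has positive measure; a pure level-truncation scheme cannot shrink past it.

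A telltale sign of the gap is that your write-up never uses hypothesis~\eqref{eq:sym-echt-groesser-k-a} in any essential way: you only invoke it to get $k\geq 0$, which already holds by definition of the kernel. But~\eqref{eq:sym-echt-groesser-k-a} cannot be dispensed with (see Example~(\ref{exa:standard-singular-nonsymmetric}) in the paper), so any proof that doesn't use $D>1$ quantitatively must be incomplete. The paper bridges the gap with a genuinely nonlinear move: after normalizing $\sup_\Omega u=1$, it tests (an $\epsilon$-truncation of) $\overline{v}=v/(1-v)$ with $v=u^+$, applies the elementary algebraic inequality of Lemma~\ref{lem:abschaetzung-RS-weak-maximum}, and uses the strict gap $D>1$ from~\eqref{eq:sym-echt-groesser-k-a} to choose $\theta$ with $\theta^2/D<1$ so the troublesome term can be absorbed. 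This produces a \emph{finite} bound on
\[
\iint\limits_{\R^d\,\R^d}\bigl(\log(1-v(x))-\log(1-v(y))\bigr)^2 k_s(x,y)\,\d y\,\d x\,,
\]
hence via~\eqref{eq:energie-abschaetzung-nach-unten} and Sobolev on $\|\log(1-v)\|_{L^{2d/(d-\alpha)}}$; since Step~1 shows $v=1$ on a set of positive measure, $\log(1-v)=-\infty$ there and the bound is contradicted. Your plan would need this (or some comparable nonlinear device, e.g.\ a Moser iteration in powers) to close; as stated it does not.
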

\begin{bem}
As the proof reveals, it is possible to weaken assumption
\eqref{eq:sym-echt-groesser-k-a} significantly because the estimate under
consideration is needed only in an integrated sense. However, it seems
challenging to provide a simple appropriate alternative to
\eqref{eq:sym-echt-groesser-k-a}.
\end{bem}
\begin{proof}
We apply a strategy which is often used in the proof of the
weak maximum principle for second order differential operators (e.g. \cite{GilbargTrudinger77}). We
first show that $u$ attains its supremum on a set of
positive measure. In a second step we show that this leads to a contradiction
if the supremum is positive. 

We choose as
test function $v=(u-k)^+$, where $0\leq k< \sup_{\Omega} u$. Then
$v\in \HOmRd$ and 
 \begin{equation}\label{eq:supsolution}
  {\cE}^k(u,v)=\iint\limits_{\R^d\,\R^d} (u(x)-u(y))v(x) k(x,y) \d y\d x \leq
0
 \end{equation}
Since $(u(x)-u(y))v(x) = \left[ (u-k)_+(x) + (u-k)_+(y) + (u-k)_-(y) \right] v(x)$ we deduce
\begin{align*}
 \frac{1}{2}\iint\limits_{\R^d\,\R^d}& (v(x)-v(y))^2 k_s(x,y) \d y\d x+
\iint\limits_{\R^d\,\R^d} (u(y)-k)^-(u(x)-k)^+ k(x,y) \d y\d x\\
 &\leq -\iint\limits_{\R^d\,\R^d} (v(x)-v(y))v(x) k_a(x,y) \d y\d x\,,
\end{align*}
and since the second term on the left hand side is positive  
\[\frac{1}{2}\iint\limits_{\R^d\,\R^d} (v(x)-v(y))^2 k_s(x,y) \d y\d x\leq
-\iint\limits_{\R^d\,\R^d} (v(x)-v(y))v(x) k_a(x,y) \d y\d x\, .\]
Now by Cauchy-Schwarz and the assumptions on $k_a$  
\begin{align*}
 &\iint\limits_{\R^d\,\R^d} (v(x)-v(y))v(x) k_s(x,y) \d y\d x \\
 &\qquad \leq 2 \iint\limits_{\R^d\,\R^d} \bet{v(x)-v(y)}k_s^{1/2}(x,y)
\bet{v(x)}k_s^{-1/2}(x,y)\bet{k_a(x,y)}\d y\d x\\
 &\qquad \leq  2A^{1/2} \|v\|_{L^2(\R^d)} \left(\iint\limits_{\R^d\,\R^d}
(v(x)-v(y))^2 k_s(x,y) \d y\d x\right)^{1/2}\,,
\end{align*}
or equivalently
\begin{equation*}
 \left(\iint\limits_{\R^d\,\R^d} (v(x)-v(y))^2 k_s(x,y) \d y\d
x\right)^{1/2}\leq 2 {A^{1/2}} \|v\|_{L^2(\R^d)}.
\end{equation*}
By \eqref{eq:energie-abschaetzung-nach-unten} and since $v=0$ on
$\complement\Omega$, the
Sobolev and the Hölder inequality imply that there is a constant $C=C(d)>0$
such that 
\[ \|v\|_{L^{2d/(d-\alpha)}(\R^d)}\leq C \|v\|_{L^2(\Omega)} \leq  C |\supp
v|^{\alpha/2d}\|v\|_{L^{2d/(d-\alpha)}(\Omega)}.\]
(If $d \leq 2$ the critical exponent may be replaced by any number greater than $2$.)
Thus 
\[|\supp v| \geq C^{-2d/\alpha}.\]
This inequality is independent of $k$ and therefore it holds for
$k\nearrow\sup_\Omega u$. Therefore $u$ must attain its supremum on a set of
positive measure. This completes the first step of the proof.

We now derive a contradiction. Without loss of generality we may assume 
$\sup_{\Omega}u=1$. Set $v=u^+$. 
We define a new function $\overline{v}$ by 
\[\overline{v}=\frac{v}{1-v}=\frac{1}{1-v}-1.\]
We want to use $\overline{v}$ as a test function in \eqref{eq:def-u-subs} but
it is not clear whether $\overline{v}$ belongs to $\HOmRd$. Thus we look at
approximations and define for small $\epsilon > 0$
\[v_\eps = (1-\eps) v \quad \text{ and } \quad \overline v_\eps =
\frac{v_\eps}{1-v_\eps} \,. \] 
The function $\overline{v}_\epsilon$ is an admissible test function. However, in
order to simplify the presentation, we use $\overline{v}$ instead
of $\overline{v}_\epsilon$ and postpone this issue until the end of the
proof. Plugging $\overline{v}$ into \eqref{eq:supsolution}, we obtain 
\begin{align*}
 \frac{1}{2}\iint\limits_{\R^d\,\R^d}&
(v(x)-v(y))\left(\frac{1}{1-v(x)}-\frac{1}{1-v(y)}\right) k_s(x,y) \d y\d x\\
 &\leq -\iint\limits_{\R^d\,\R^d} (v(x)-v(y))\frac{v(x)}{1-v(x)} k_a(x,y) \d y\d
x\\
 &=-\frac{1}{2}\iint\limits_{\R^d\,\R^d} (v(x)-v(y))\left(\frac{v(x)}{1-v(x)}+
\frac{v(y)}{1-v(y)}\right)k_a(x,y) \d y\d x\,.
\end{align*}
This is equivalent to 
\begin{align*}
 &\iint\limits_{\R^d\,\R^d} \frac{(v(x)-v(y))^2}{(1-v(x))(1-v(y))} k_s(x,y) \d
y\d x\\
 &\leq - \iint\limits_{\R^d\,\R^d}
\frac{(v(x)-v(y))}{(1-v(x))^{1/2}(1-v(y))^{1/2}}\left(\frac{
v(x)(1-v(y))+v(y)(1-v(x))}{(1-v(x))^{1/2}(1-v(y))^{1/2}}\right)k_a(x,y) \d y\d
x\,.
\end{align*}
An application of the Young inequality leads to  
\begin{multline*}
 \iint\limits_{\R^d\,\R^d} \frac{(v(x)-v(y))^2}{(1-v(x))(1-v(y))} k_s(x,y) \d
y\d x \leq \frac{1}{2} \iint\limits_{\R^d\,\R^d}
\frac{(v(x)-v(y))^2}{(1-v(x))(1-v(y))} k_s(x,y)\d y\d x\\
 +\frac{1}{2}\iint\limits_{\R^d\,\R^d}
\frac{[v(x)(1-v(y))+v(y)(1-v(x))]^2}{(1-v(x))(1-v(y))}\frac{k_a^2(x,y)}{
k_s(x,y)} \d y\d x
\end{multline*}
and hence 
\begin{align}\label{eq:maxprin_start}
\begin{split}
 \frac{1}{2}\iint\limits_{\R^d\,\R^d} &\frac{(v(x)-v(y))^2}{(1-v(x))(1-v(y))}
k_s(x,y) \d y\d x\\ 
 &\leq \frac{1}{2}\iint\limits_{\R^d\,\R^d}
\frac{[v(x)(1-v(y))+v(y)(1-v(x)))^2]}{(1-v(x))(1-v(y))}\frac{k_a^2(x,y)}{
k_s(x,y)} \d y\d x\,.
\end{split}
\end{align}
Using $v=0$ on $\complement\Omega$, $v\leq 1$ and that
$\frac{k_a^2(x,y)}{k_s(x,y)}$ is symmetric, the right hand side can be
estimated from above as follows:
\begin{align*}
\frac{1}{2}&\iint\limits_{\R^d\,\R^d}
\frac{[v(x)(1-v(y))+v(y)(1-v(x)))^2]}{(1-v(x))(1-v(y))}\frac{k_a^2(x,y)}{
k_s(x,y)} \d y\d x \\
&\leq \iint\limits_{\Omega\,\R^d} \left( \frac{v^2(x)(1-v(y))}{1-v(x)} +
v(x)v(y) \right)\frac{k_a^2(x,y)}{k_s(x,y)} \d y\d x\\
&\leq \theta^2 \iint\limits_{\Omega \,\R^d}
\frac{(v(x)-v(y))^2}{(1-v(x))(1-v(y))}\frac{k_a^2(x,y)}{k_s(x,y)} \d y\d x
+ \left(\tfrac{\theta}{\theta-1} +1\right) \iint\limits_{\Omega\,\R^d}
\frac{k_a^2(x,y)}{k_s(x,y)} \d y\d x \\
&\leq \frac{\theta^2}{D} \iint\limits_{\R^d \,\R^d}
\frac{(v(x)-v(y))^2}{(1-v(x))(1-v(y))} k_s(x,y) \d y\d x
+ \left(\tfrac{\theta}{\theta-1} +1\right) A |\Omega|\,,
\end{align*}
where we have applied \autoref{lem:abschaetzung-RS-weak-maximum} and
\eqref{eq:sym-echt-groesser-k-a}. Now, we choose $\theta=\sqrt{\frac{D+1}{2}}$
such that $\frac{\theta^2}{D} <1$. Combining the above estimate and
\eqref{eq:maxprin_start} leads to 
\[\iint\limits_{\R^d\,\R^d} \frac{(v(x)-v(y))^2}{(1-v(x))(1-v(y))} k_s(x,y) \d
y\d x\leq c_1 A |\Omega|\,,\]
for some positive constant $c_1=c_1(D)$. Next, we want to estimate the
left-hand side from below. We apply the inequality
\[\frac{(a-b)}{ab}=(a-b)(b^{-1}-a^{-1})\geq (\log a-\log b)^2\,,\] 
which holds for positive reals $a,b$, to $a=1-v(y)$ and $b=1-v(x)$. Thus we
obtain 
\[\iint\limits_{\R^d\,\R^d} \Big(\log (1-v(x))-\log (1-v(y))\Big)^2
k_s(x,y) \d
y\d x \leq c_1 A |\Omega| \,.\]
Due to condition \eqref{eq:energie-abschaetzung-nach-unten} we can apply the
Sobolev inequality and obtain  
\[ \|w\|_{L^{2d/(d-\alpha)}}\leq c_2 A |\Omega| \,, \] 
where $c_2 \geq 1$ and $w= \log(1-v)$. Recall that, in fact, we have proved
$\|w_\epsilon\|_{L^{2d/(d-\alpha)}}\leq c_2 A |\Omega|$ for $w_\epsilon=
\log(1-v_\epsilon)$ and every $\eps \in (0,\tfrac12)$, where $c_2$ is
independent of
$\epsilon$. 

By Fatou's lemma, this contradicts the fact that $v=u^+$ attains
is supremum $1$ on a set of positive measure. The proof is complete. 
\end{proof}
\begin{lem}\label{lem:abschaetzung-RS-weak-maximum}
Assume $\theta>1$ and $a, b \in [0,1)$. Then 
 \begin{align}
  \label{eq:abschaetzung-RS-weak-maximum}
  \frac{1-a}{1-b}&\leq \theta^2 \frac{\left(b-a\right)^2}{(1-b)(1-a)}+
\frac{\theta}{\theta-1} \\
  \label{eq:abschaetzung-RS-weak-maximum-sym}
\frac{1-b}{1-a} +  \frac{1-a}{1-b}&\leq 2 \theta^2
\frac{\left(b-a\right)^2}{(1-b)(1-a)}+
\frac{2 \theta}{\theta-1} 
\end{align}
\end{lem}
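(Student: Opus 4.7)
The natural change of variables is $s=1-a$, $t=1-b$, both lying in $(0,1]$, with $s-t = b-a$. The first inequality then becomes
\[
 \frac{s}{t} \;\leq\; \theta^2\, \frac{(s-t)^2}{st} \;+\; \frac{\theta}{\theta-1},
\]
so the task is to control the ratio $s/t$ in terms of the ``distance'' quantity $(s-t)^2/(st)$ plus a universal constant.

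The starting observation is the trivial identity $\frac{s}{t} = 1 + \frac{s-t}{t}$, together with the algebraic factorisation
\[
 \frac{s-t}{t} \;=\; \frac{s-t}{\sqrt{st}} \cdot \sqrt{\tfrac{s}{t}},
\]
which splits the difference quotient into one ``symmetric'' factor and one factor that still carries the target ratio $s/t$. I would apply the weighted Young inequality $|xy| \leq \tfrac{\epsilon}{2}x^2 + \tfrac{1}{2\epsilon}y^2$ to these two factors, obtaining
\[
 \frac{s-t}{t} \;\leq\; \frac{\epsilon}{2}\,\frac{(s-t)^2}{st} \;+\; \frac{1}{2\epsilon}\,\frac{s}{t}
\]
for any $\epsilon > 0$. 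Plugging this into $\frac{s}{t} = 1 + \frac{s-t}{t}$, choosing $\epsilon > 1/2$ and absorbing the $s/t$ term on the right into the left-hand side yields
\[
 \frac{s}{t} \;\leq\; \frac{2\epsilon}{2\epsilon-1} \;+\; \frac{\epsilon^2}{2\epsilon-1}\,\frac{(s-t)^2}{st}.
\]
The final step is to tune $\epsilon$. The choice $\epsilon = \theta$ is the key calibration: for $\theta>1$ one checks directly that $\frac{\epsilon^2}{2\epsilon-1} = \frac{\theta^2}{2\theta-1} \leq \theta^2$ and that $\frac{2\epsilon}{2\epsilon-1} = \frac{2\theta}{2\theta-1} \leq \frac{\theta}{\theta-1}$ (the second reduces after cross-multiplying to the trivial $-\theta \leq 0$). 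This proves \eqref{eq:abschaetzung-RS-weak-maximum}.

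The symmetrised inequality \eqref{eq:abschaetzung-RS-weak-maximum-sym} now follows for free: apply the first inequality once as stated and once with $a$ and $b$ swapped (noting that both $(b-a)^2$ and $(1-a)(1-b)$ are symmetric in $a,b$), then add the two bounds.

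The only non-obvious step is selecting the Young weight. The most tempting choice $\epsilon = \theta/2$ reproduces the additive constant $\theta/(\theta-1)$ exactly but produces a coefficient $\frac{\theta^2}{4(\theta-1)}$ in front of $(s-t)^2/(st)$, which only dominates $\theta^2$ for $\theta \geq 5/4$; the cleaner choice $\epsilon = \theta$ sacrifices a little in the additive constant (giving $\frac{2\theta}{2\theta-1}$ in place of $\frac{\theta}{\theta-1}$) but yields a coefficient $\frac{\theta^2}{2\theta-1}\leq \theta^2$ valid for every $\theta>1$, and the additive term is then bounded by $\frac{\theta}{\theta-1}$ as required.
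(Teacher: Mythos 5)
Your proof is correct, and it follows a genuinely different route from the paper's. The paper substitutes $t = b/a$ (after reducing to $a \le b$) and does a direct case split on whether $\tfrac{1}{a}-1 \le \theta(t-1)$ or not, showing in the first case that the ratio is bounded by the $\theta^2$-term alone, and in the second case by the constant $\tfrac{\theta}{\theta-1}$ alone. Your version instead writes $\tfrac{s}{t} = 1 + \tfrac{s-t}{t}$, factors $\tfrac{s-t}{t} = \tfrac{s-t}{\sqrt{st}}\cdot\sqrt{s/t}$, applies weighted Young, and absorbs the $\tfrac{s}{t}$-term back to the left; tuning the Young weight $\epsilon = \theta$ then yields coefficients $\tfrac{\theta^2}{2\theta-1}\le\theta^2$ and $\tfrac{2\theta}{2\theta-1}\le\tfrac{\theta}{\theta-1}$, both checked correctly. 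Your argument is more mechanical and makes the origin of the two constants transparent (they are exactly the Young coefficients after absorption, slightly loosened), at the small cost of not producing the exact constant $\tfrac{\theta}{\theta-1}$ directly; the paper's case split is shorter and tighter but looks more ad hoc. Your derivation of the symmetrised inequality by applying the first with $a,b$ swapped and adding is the same as the paper's observation. One small point you could make explicit: when $a>b$ the left side of the first inequality is below $1 < \tfrac{\theta}{\theta-1}$ so the claim is trivial, which is why the sign of $s-t$ in the Young step is harmless (you are in effect bounding $\tfrac{s-t}{t}$ by its absolute value, which is always admissible).
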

\begin{proof} It is sufficient to establish assertion
\eqref{eq:abschaetzung-RS-weak-maximum} since it implies
\eqref{eq:abschaetzung-RS-weak-maximum-sym}. For the proof of 
\eqref{eq:abschaetzung-RS-weak-maximum} it is sufficient to assume $a \leq b$. 
Assume $\theta>1$ and $0\leq a\leq b<1$. Then, for $t=\frac{b}{a}$  
 \[0\leq a <1 \leq t <\frac{1}{a} \]
and inequality \eqref{eq:abschaetzung-RS-weak-maximum} reads 
\begin{align}\label{eq:clever_aim}
\frac{1-a}{1-ta}\leq \theta^2 \frac{a^2(t-1)^2}{(1-ta)(1-a)}+
\frac{\theta}{\theta-1}\,. 
\end{align}

\textit{Case 1:} $\frac{1}{a}-1\leq \theta (t-1)$. In this case  
\begin{align*}
\left(\tfrac{1}{a}-1\right)^2\leq \theta^2 (t-1)^2 \quad \Rightarrow \quad
(1-a)\leq \theta^2 \, \frac{a^2(t-1)^2}{(1-a)} \quad \Rightarrow
\quad\frac{1-a}{1-ta}\leq \theta^2  \frac{a^2(t-1)^2}{(1-ta)(1-a)}\,,
\end{align*}
which proves \eqref{eq:clever_aim}.

\textit{Case 2:} $\frac{1}{a}-1>  \theta (t-1) \; \Leftrightarrow \; (t-1)<
\frac{1}{\theta}\left(\frac{1}{a}-1\right)  \; \Leftrightarrow
\; t<\frac{1}{\theta}\left(\frac{1}{a}-1\right)+1$. Therefore  
\begin{align*}
 \frac{1-a}{1-ta}=\frac{a\left(\frac{1}{a}-1\right)}{a\left(\frac{1}{a}-t\right)
}
 \leq
\frac{\frac{1}{a}-1}{\frac{1}{a}-1-\frac{1}{\theta}\left(\frac{1}{a}-1\right)} =
\frac{\frac{1}{a}-1}{(\frac{1}{a}-1)(1-\frac{1}{\theta})}=\frac{\theta}{\theta-1
} \,,
\end{align*}
which again proves \eqref{eq:clever_aim}.
\end{proof}

\subsection{Fredholm alternative}\label{subsec:fredholm}
The aim of this subsection is to prove existence and uniqueness of solutions to
\eqref{eq:Dirichletproblem-randdaten} without assuming positive definiteness of
the bilinear form $\cE^k$, i.e. without assuming 
\eqref{eq:definitheitsbedingung}. As for the weak maximum principle we assume
the kernel $k$ to satisfy \eqref{eq:energie-abschaetzung-nach-unten}
and \eqref{eq:sym-echt-groesser-k-a}. Note that \eqref{eq:energie-abschaetzung-nach-unten} implies \eqref{eq:poincare-type-ineq} by \autoref{lem:poincare-singulaer}.

\pagebreak[2]
We prove the following well-posedness result:
\begin{thm}\label{thm:Fredholm}
 Let $\Omega\subset\R^d$ be open and bounded. Let $f\in
\HdualOmRd$ and let $k$ satisfy \eqref{eq:integrierbarkeitsbedingung-k_s}, 
 \eqref{eq:ka-square-domin-by-ks}, \eqref{eq:sym-echt-groesser-k-a} and
\eqref{eq:energie-abschaetzung-nach-unten}. 
Then the Dirichlet problem \eqref{eq:Dirichletproblem-randdaten} has a unique solution
$u \in V(\Omega;k)$. Moreover, there is a constant $C=C(C_P,A,D)>0$ such that
\begin{equation}
 \label{eq:energy-estimate-solution-fredholm}
  \big[u,u\big]_{V(\Omega;k)}\leq C\left(
\|f\|^2_{\HdualOmRd}+\|g\|^2_{L^2(\Omega)}+\big[g,g\big]_{V(\Omega;k)}+\|u\|^2_{L^2(\Omega)}
\right)\,.
 \end{equation}  
\end{thm}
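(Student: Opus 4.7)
The plan is to reduce the inhomogeneous problem \eqref{eq:Dirichletproblem-randdaten} to the homogeneous one \eqref{eq:Dirichletproblem-nullrand} via $\widetilde u := u - g$, and then to apply the Fredholm alternative, using the G{\aa}rding inequality from \autoref{lem:garding} to produce a coercive shift of $\cE^k$ and the weak maximum principle \autoref{thm:weak-maximum} for injectivity. The compactness needed for Fredholm will come from \eqref{eq:energie-abschaetzung-nach-unten}, which continuously embeds $\HOmRd$ into a standard fractional Sobolev space.

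For the reduction, observe that condition \eqref{eq:ka-square-domin-by-ks} is exactly \eqref{eq:ktilde_comparability-omega} together with \eqref{eq:ka-square-domin-by-ktilde} for the choice $\widetilde k = k_s$. Hence the estimate of $|\cE^k(g,\varphi)|$ carried out in the proof of \autoref{thm:existence-lax-milgram} applies verbatim and shows that $\varphi \mapsto \cE^k(g,\varphi)$ is continuous on $\HOmRd$. Setting $F := f - \cE^k(g,\cdot) \in \HdualOmRd$, the problem becomes: find $\widetilde u \in \HOmRd$ with $\cE^k(\widetilde u,\varphi) = \ska{F,\varphi}$ for every $\varphi \in \HOmRd$.

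Next, I set up the Fredholm framework. By \autoref{lem:garding}, the shifted form $\cE^k_\gamma(u,v) := \cE^k(u,v) + \gamma\,(u,v)_{L^2(\R^d)}$ is coercive on $\HOmRd$ for $\gamma$ sufficiently large, so by Lax-Milgram it induces an isomorphism $T_\gamma \colon \HOmRd \to \HdualOmRd$. Consequently the operator $T$ induced by $\cE^k$ satisfies $T = T_\gamma - \gamma J$, where $J$ factors through the embedding $\HOmRd \hookrightarrow L^2(\R^d)$. This embedding is compact: \eqref{eq:energie-abschaetzung-nach-unten} gives a continuous embedding $\HOmRd \hookrightarrow H^{\alpha/2}_\Omega(\R^d)$, and the Rellich-Kondrachov theorem for bounded $\Omega$ then compactly embeds the latter into $L^2(\R^d)$. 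Hence $T$ is a compact perturbation of an isomorphism, and Fredholm's alternative reduces existence and uniqueness of $\widetilde u$ to injectivity of $T$. If $T\widetilde u = 0$, then both $\cE^k(\widetilde u,\varphi)\leq 0$ and $\cE^k(-\widetilde u,\varphi)\leq 0$ hold for every $\varphi \in \HOmRd$, so \autoref{thm:weak-maximum} applied to $\widetilde u$ and to $-\widetilde u$ yields $\sup_\Omega \widetilde u \leq 0$ and $\sup_\Omega(-\widetilde u)\leq 0$, hence $\widetilde u \equiv 0$. This gives existence and uniqueness of $u = \widetilde u + g \in \VOmk$.

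For the energy estimate \eqref{eq:energy-estimate-solution-fredholm}, I would test the reduced equation with $\widetilde u$, apply \autoref{lem:garding} to bound $\cE^k(\widetilde u,\widetilde u)$ from below by $\tfrac14 \norm{\widetilde u}^2_{\HRdk} - \gamma \norm{\widetilde u}^2_{L^2(\R^d)}$, bound $|\cE^k(g,\widetilde u)|$ using the splitting from the proof of \autoref{thm:existence-lax-milgram}, and absorb cross-terms by Young's inequality. Translating back via $u = \widetilde u + g$, using $\norm{\widetilde u}^2_{L^2(\R^d)} \leq 2\norm{u}^2_{L^2(\Omega)} + 2\norm{g}^2_{L^2(\Omega)}$ and the triangle inequality for the $\VOmk$ seminorm, produces the stated bound; the $\norm{u}^2_{L^2(\Omega)}$ summand on the right is precisely the price of having only a G{\aa}rding inequality rather than outright coercivity. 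The main obstacle, in my view, is the injectivity step: it relies entirely on the delicate logarithmic/Sobolev argument carried out in \autoref{thm:weak-maximum}, whose compatibility with the present hypotheses \eqref{eq:sym-echt-groesser-k-a} and \eqref{eq:energie-abschaetzung-nach-unten} is precisely why those assumptions are imposed here.
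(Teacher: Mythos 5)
Your proposal is correct and follows essentially the same three-step route as the paper: compactness of $\HOmRd \hookrightarrow L^2_\Omega(\R^d)$ via \eqref{eq:energie-abschaetzung-nach-unten}, Fredholm alternative with injectivity supplied by the weak maximum principle applied to $\pm\widetilde u$, and reduction to zero complement data using that \eqref{eq:ka-square-domin-by-ks} is \eqref{eq:domination_by_ktilde} with $\widetilde k=k_s$. The only cosmetic difference is that you invoke the G{\aa}rding inequality of \autoref{lem:garding} directly in the energy estimate, whereas the paper redoes the same $k_s$/$k_a$ splitting and Young absorption by hand; the two computations are equivalent.
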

\newlist{steps}{enumerate}{1}
\newlength{\tempStep}
\settowidth{\tempStep}{\textbf{Step 3:}}
\setlist[steps]{label=Step \arabic{*}:, font=\bfseries, labelwidth=\tempStep, leftmargin=!, itemindent=3em}
\begin{proof}
We use the Fredholm alternative (see e.g. \cite{evans}). 
\begin{steps}
 \item We will use \eqref{eq:energie-abschaetzung-nach-unten} to show that the
embedding $\HOmRd\hookrightarrow L^2_\Omega(\R^d)$ is compact. Since the embedding $L^2(\Omega) \hookrightarrow \HdualOmRd$ is continuous we obtain then the compactness of the embedding $\HOmRd\hookrightarrow \HdualOmRd$.\\
Let $\cA\subset \HOmRd$ with $\norm{u}_{\HRdk}\leq C$ for all $u\in \cA$ and
some $C<\infty$.
Let $B\subset \R^d$ be an open ball with $\Omega \subset B$. Let us recall that
the embedding
$H^{\alpha/2}(B)\hookrightarrow L^2(B)$ is compact. Then, for
$u\in \cA$
\begin{align*}
 \norm{u}^2_{H^{\alpha/2}(B)}&=\iint\limits_{B\,B}
(u(x)-u(y))^2\bet{x-y}^{-d-\alpha}\d y\d x \\
&\leq  \iint\limits_{R^d\,R^d} (u(x)-u(y))^2\bet{x-y}^{-d-\alpha}\d y\d x \\
&\leq \lambda^{-1} \iint\limits_{R^d\,R^d} (u(x)-u(y))^2 k_s(x,y) \d y\d x \leq \lambda^{-1} C^2 \,,
\end{align*}
where we used \eqref{eq:energie-abschaetzung-nach-unten}.
Therefore $\cA$ is bounded in $H^{\alpha/2}(B)$ and thus pre-compact in
$L^2(B)$.
By the definition of $\HOmRd$ we know $u=0$ on $\complement\Omega$ and thus the
set $\cA$ is also pre-compact in $L^2_\Omega(\R^d)$ and in $\HdualOmRd$.
\item Existence and uniqueness of \eqref{eq:Dirichletproblem-nullrand}. By \autoref{lem:garding} the bilinear form \[ (u,v) \mapsto \cE^k(u,v) + \gamma (u,v)_{L^2(\Omega)}\] is coercive for some $\gamma = \gamma(A)>0$ and therefore there is a unique solution $u \in \HOmRd$ to the problem  
\begin{equation} \label{eq:hilfsproblem}
 \left\{ 
  \begin{aligned}
   \cE^k(u,v) + \gamma (u,v)_{L^2(\Omega)} &= \ska{f,v} \quad &\text{for all } v \in \HOmRd,\\
   u &= 0\qquad &\text{on } \complement \Omega\,.
  \end{aligned} \right.
\end{equation}
Moreover, due to \autoref{lem:garding} the solution $u$ satisfies
\[ \norm{u}^2_{\HOmRd} \leq 4 \cE^k(u,u) + 4 \gamma \norm{u}_{L^2(\Omega)}^2 = 4 \ska{f,v} \leq 4 \norm{f}_{\HdualOmRd} \norm{u}_{\HOmRd}\,.
\]
This estimate together with Step 1 shows that the operator $K \colon \HdualOmRd \to \HdualOmRd$, which maps the inhomogeneity $f$ to the solution $u \in \HOmRd \Subset \HdualOmRd$ of \eqref{eq:hilfsproblem}, is a compact operator. Fredholm's theorem in combination with the weak maximum principle \autoref{thm:weak-maximum} shows that \eqref{eq:Dirichletproblem-nullrand} has a unique solution $u \in \HOmRd$.
\item The well-posedness of \eqref{eq:Dirichletproblem-randdaten} follows in the same way as in the proof of \autoref{thm:existence-lax-milgram}. It remains to prove the estimate \eqref{eq:energy-estimate-solution-fredholm}. Let $u$ be the solution of \eqref{eq:Dirichletproblem-randdaten}. We apply
$v=u-g \in \HOmRd$ as test function:
\begin{align*}
  &\langle f, v\rangle_{\HdualOmRd} - {\cE}^k(g,v)={\cE}^k(v,v)\\
  &=\iint\limits_{\R^d\,\R^d}(v(x)-v(y))v(x)k(x,y)\d x\d y\\
 &=\iint\limits_{\R^d\,\R^d}(v(x)-v(y))v(x)k_s(x,y)\d x\d
y+\iint\limits_{\R^d\,\R^d}(v(x)-v(y))\,v(x) k_a(x,y)\d x\d y\, ,
\end{align*}
As in the proof of \autoref{thm:existence-lax-milgram} we may
estimate 
\begin{align*}
&\iint\limits_{\R^d\,\R^d} (v(x)-v(y))^2k_s(x,y)\d x\d y 
 \leq \frac{1}{2\epsilon} \norm{f}^2_{\HdualOmRd} + 2\epsilon \norm{v}^2_{\HRdk} \\
&\quad +
\frac{1}{2\epsilon} \iint\limits_{\Omega\,\R^d} \bet{g(x)-g(y)}^2 k_s(x,y)
\d y \d x + 2 \eps \iint\limits_{\R^d\,\R^d} \bet{v(x)-v(y)}^2 k_s(x,y) \d x
\d
y \\
&\quad + \frac{A^2}{2\epsilon} \iint\limits_{\Omega\,\R^d} \bet{g(x)-g(y)}^2
k_s(x,y) \d y \d x + 2 \eps \|v\|^2_{L^2(\Omega)}\\
&\quad+ 2\iint\limits_{\R^d\,\R^d}\bet{v(x)-v(y)}\bet{v(x)} \bet{k_a(x,y)}\d x\d
y\,. 
\end{align*}
Due to $v=0$ on $\complement \Omega$, the last term can be estimated as follows: 
\begin{align*}
 &\iint\limits_{\R^d\,\R^d}\bet{v(x)-v(y)}\bet{v(x)} \bet{k_a(x,y)}\d x\d y\\
 &\leq \eps \iint\limits_{\R^d\,\R^d} (v(x)-v(y))^2 k_s(x,y) \d x \d y +
\frac{A}{4\eps} \|v\|^2_{L^2(\Omega)}\,. 
\end{align*}
Hence, after choosing $\epsilon$ appropriately,
\begin{equation*}
\iint\limits_{\Omega\,\R^d}(v(x)-v(y))^2k_s(x,y)\d x\d y\leq
 \norm{f}^2_{\HdualOmRd}  + c_1 \big[g,g\big]_{V(\Omega;k)}+ c_2
\|v\|^2_{L^2(\Omega)} \,,
\end{equation*}
where $c_1,c_2 >0$ depend on $A$. This implies
\eqref{eq:energy-estimate-solution-fredholm}. \qedhere
\end{steps}
\end{proof}
\section{Parabolic Problem}\label{sec:parabolic}
In this section we prove well-posedness of the initial boundary value problem
\begin{equation}\label{eq:dirichlet-para}
 \left\{
\begin{array}{rll}
 \partial_t u + \opL u =& f \qquad  &\text{in } (0,T) \times \Omega ,\\
 u =& 0  &\text{on } [0,T] \times \complement \Omega,\\
 u =& u_0   &\text{on } \{0\} \times \Omega,
\end{array} 
\right.
\end{equation}
where $\Omega \subset \R^d$ is a bounded domain, $Q_T= (0,T) \times \Omega$ and
\begin{equation}
 f \in L^2(Q_T), \qquad u_0 \in L^2(\Omega).
\end{equation}
To put this problem in a functional analytic framework we define for a Banach space $\cB$
\begin{align*}
 W(0,T) = W(0,T;\cB) &= \left\{ u \in L^2\left(0,T;\cB\right) \colon u' \text{ exists and } u'
\in L^2\left(0,T;\cB^*\right) \right\}\,.
\end{align*}
$W(0,T)$ is a Banach space endowed with the norm
\[ 
 \norm{u}^2_{W(0,T)} = \int_0^T \norm{u(t)}^2_{\cB} \d t + \int_0^T
\norm{u'(t)}^2_{\cB^*} \d t\,. 
\]
We consider the Gelfand triplet
\[ \HOmRd \hookrightarrow L^2(\Omega) \hookrightarrow \HdualOmRd\,,\]
where the two embeddings are continuous and each space is dense in the following
one.

Define for $u,v \in \cB$
\begin{equation}
 \cE^k(t;u,v) = \iint\limits_{\R^d\,\R^d} \left( u(x)-u(y) \right) v(x) k_t(x,y)
\d x \d y\,,
\end{equation}
where we assume that 
\begin{equation}\label{eq:zeitabh-kern}
 k_t(x,y)=a(t,x,y) k(x,y)
\end{equation}
with a measurable function $a \colon \R \times \R^d \times \R^d \to
\left[\frac12 ,1 \right]$ which is symmetric with respect to $x$ and $y$, and a
measurable kernel $k \colon \R^d \to \R^d \to [0,\infty]$.

\begin{defi}[Parabolic variational formulation]\label{prob:abstract} Let $u_0 \in L^2(\Omega)$ and\\ $f \in
L^2(0,T;\HdualOmRd)$. We say that $u \in W(0,T;\HOmRd)$ is a solution of
\eqref{eq:dirichlet-para} if for all $v \in \HOmRd$
\begin{subequations}\label{eq:abstract-initial}
 \begin{align}
\frac{\d}{\d t} (u(t),v)_{L^2(\Omega)} + \cE^k(t;u(t),v) &= \ska{f,v}\quad
\text{ for a.e. } t \in (0,T)\,, \label{eq:abstract-initial-a}\\
 u(0)&=u_0\,. \label{eq:abstract-initial-b}
\end{align}
\end{subequations}
\end{defi}
We refer to \autoref{defi:dir-par-nonzero} for the corresponding problem with nonzero complement data.
\begin{bem}{\ }
\begin{enumerate}[a)]
 \item \eqref{eq:abstract-initial-a} is satisfied if and only if for all $v \in
\HOmRd$ and for all $\phi \in C_c^\infty(0,T)$
\[
  -\int_0^T (u(t),v)_{L^2(\Omega)} \phi'(t) \d t + \int_0^T \cE^k(t;u(t),v)
\phi(t) \d t 
  = \int_0^T \ska{f,v} \phi(t) \d t\,.\]
\item The initial condition \eqref{eq:abstract-initial-b} is well-defined due to
the embedding \[ W(0,T) \hookrightarrow C([0,T];L^2(\Omega))\,,\] see e.g. \cite{Wloka87}. 
\item \autoref{bem:weak-solution} holds analogously.
\qedhere
\end{enumerate}
\end{bem}
A well-known parabolic analog (e.g. \cite[Corollary 23.26]{Zeidler90}) of the
Lax-Milgram Lemma asserts that there is a unique solution to
\eqref{eq:dirichlet-para} provided the bilinear form $\cE(t;\cdot,\cdot) : \cB
\times \cB \to \R$ has the following properties:
\begin{subequations}
\begin{align}
&\parbox{0.8\textwidth}{For all $u,v \in \cB$ the mapping $t \mapsto \cE^k(t;u,v)$
is measurable on $(0,T)$,} \label{eq:measurable}\\
&\parbox{0.8\textwidth}{there is $M> 0$ such that for all $t \in (0,T)$ and $u,v
\in \cB$ we have
$$ \bet{\cE^k(t;u,v)} \leq M \norm{u}_\cB \norm{v}_\cB,$$} \label{eq:bounded} \\
&\parbox{0.8\textwidth}{there are $m> 0$ and $L_0 \geq 0$ such that for all $t
\in (0,T)$ and $u \in \cB$ we have
$$ \cE^k(t;u,u) \geq m \norm{u}_\cB^2 - L_0 \norm{u}^2_{L^2(\Omega)}.$$}
\label{eq:garding}
\end{align}
\end{subequations}
The measurability condition \eqref{eq:measurable} follows immediately from the
measurability of $k_t$. Due to the special structure \eqref{eq:zeitabh-kern} of
$k_t$ we may refer to \autoref{lem:Operator-Bilinearform} for the proof of the
continuity condition \eqref{eq:bounded} and to \autoref{lem:garding} for the
G{\aa}rding inequality \eqref{eq:garding}. Therefore we obtain the following
result:
\begin{thm}[Well-posedness of the parabolic
problem]\label{thm:wellposedness-para}
Assume that $k_t$ is of the form \eqref{eq:zeitabh-kern}, where $k$ satisfies
\eqref{eq:integrierbarkeitsbedingung-k_s} and \eqref{eq:domination_by_ktilde}.
Then there is a unique solution $u \in W(0,T;\HOmRd)$ of \eqref{eq:dirichlet-para}. Moreover, for all $u_0 \in L^2(\Omega)$ and $f \in
L^2(0,T;\HdualOmRd)$ there is a constant $C>0$ such that
\begin{equation}
 \norm{u}_{W(0,T)} \leq C \left( \norm{f}_{L^2(0,T;\HdualOmRd)} +
\norm{u_0}_{L^2(\Omega)} \right)\,.
\end{equation}
The constant $C$ depends on the constants $M,m$ and $L_0$ in
\eqref{eq:bounded},\eqref{eq:garding}.
\end{thm}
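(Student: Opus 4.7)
The plan is to apply the abstract parabolic Lax--Milgram theorem cited after the statement (Corollary 23.26 of Zeidler) to the Gelfand triple $\HOmRd \hookrightarrow L^2(\Omega) \hookrightarrow \HdualOmRd$ together with the family of bilinear forms $\cE^k(t;\cdot,\cdot)$. Once the three structural conditions \eqref{eq:measurable}, \eqref{eq:bounded}, \eqref{eq:garding} are verified with constants uniform in $t\in(0,T)$, that abstract theorem directly produces a unique solution $u \in W(0,T;\HOmRd)$ of \eqref{eq:dirichlet-para}, together with an a priori estimate of the claimed form.

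The key reduction is that the symmetry of $a(t,x,y)$ in $(x,y)$ forces
\[
 (k_t)_s(x,y) = a(t,x,y)\, k_s(x,y), \qquad (k_t)_a(x,y) = a(t,x,y)\, k_a(x,y),
\]
and since $a$ takes values in $[\tfrac12,1]$, the assumptions \eqref{eq:integrierbarkeitsbedingung-k_s} and \eqref{eq:domination_by_ktilde} transfer from $k$ to $k_t$ uniformly in $t$, with the same auxiliary kernel $\widetilde k$ and with the constants enlarged by at most a factor of two. The factor $2$ arises because $(k_t)_s \geq \tfrac12 k_s$ in the first inequality of \eqref{eq:domination_by_ktilde}, whereas $((k_t)_a)^2/\widetilde k \leq k_a^2/\widetilde k$ is automatic; moreover $\widetilde k(x,y)=0$ still implies $(k_t)_a(x,y)=a\,k_a(x,y)=0$, so the null-set condition on $\widetilde k$ is preserved.

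With these uniform bounds in hand, the three structural conditions become immediate. Condition \eqref{eq:measurable} follows from measurability of $a$ and Fubini. Condition \eqref{eq:bounded} is \autoref{lem:Operator-Bilinearform} applied to the stationary kernel $k_t(\cdot,\cdot)$ for each fixed $t$, producing a continuity constant $M$ that depends only on the (uniform) constants $A_1,A_2$. The G\aa{}rding inequality \eqref{eq:garding} is \autoref{lem:garding} applied to $k_t$, yielding $m=\tfrac14$ and $L_0=\gamma(A_1,A_2)$ independent of $t$. Invoking the abstract parabolic Lax--Milgram result gives existence, uniqueness, and a bound on $\norm{u}_{L^2(0,T;\HOmRd)}$ in terms of $\norm{f}_{L^2(0,T;\HdualOmRd)}$ and $\norm{u_0}_{L^2(\Omega)}$, via the standard argument of testing against $u(t)$ and applying Gronwall to absorb the $L_0\norm{u(t)}^2_{L^2(\Omega)}$ term.

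To upgrade this to the $W(0,T)$ norm, I would read \eqref{eq:abstract-initial-a} as $u'(t) = f(t) - \opL(t) u(t)$ in $\HdualOmRd$ and use the uniform continuity \eqref{eq:bounded} to dominate the operator term by $M\norm{u(t)}_{\HOmRd}$; squaring and integrating then closes the bound on $\norm{u'}_{L^2(0,T;\HdualOmRd)}$. I do not foresee a genuine obstacle: the only point that needs care is the uniform-in-$t$ transfer of \eqref{eq:domination_by_ktilde} to $k_t$ described above, after which the argument reduces entirely to the stationary lemmas already established, combined with the off-the-shelf abstract parabolic well-posedness result.
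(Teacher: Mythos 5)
Your proposal is correct and follows exactly the paper's route: verify conditions \eqref{eq:measurable}, \eqref{eq:bounded}, \eqref{eq:garding} for $\cE^k(t;\cdot,\cdot)$ and invoke the abstract parabolic Lax--Milgram theorem (Zeidler, Corollary 23.26). The one thing the paper leaves implicit -- that the symmetry of $a$ in $(x,y)$ gives $(k_t)_s=a\,k_s$, $(k_t)_a=a\,k_a$, and hence that \eqref{eq:integrierbarkeitsbedingung-k_s} and \eqref{eq:domination_by_ktilde} transfer to $k_t$ uniformly in $t$ (with $A_1$ at worst doubled and $A_2$ unchanged) -- you spell out explicitly, which is a useful addition but not a different argument.
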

In order to consider non-homogeneous complement data we assume
that these values are prescribed by a function $g \colon [0,T] \times \R^d \to \R$. For the functional analytic treatment of this problem we need a modification of the space $V(\Omega;k)$ that turns this linear space into a Hilbert space. One possibility is to define
\begin{equation}
 H(\Omega;k) = \left\{ g \in L^2(\R^d) \colon \left( g(x)-g(y) \right) k_s^{1/2}(x,y) \in L^2(\Omega \times \R^d) \right\}\,. 
\end{equation}
From the proof of \autoref{lem:hilbertraeume} it can be seen that this space is a separable Hilbert space with the inner product defined by
\[ \left( g,h \right)_{H(\Omega;k)} = (g,h)_{L^2(\R^d)} + \left[ g,h\right]_{V(\Omega;k)}\,.\]
\begin{defi}[Parabolic variational formulation]\label{defi:dir-par-nonzero} Let $f \in L^2(Q_T)$ and $g \in W(0,T;\HOmk)$. We say that $u \in W(0,T;\HOmk)$ is a solution of
\begin{equation}\label{eq:dirichlet-para-nonzero}
 \left\{
\begin{array}{rll}
 \partial_t u + \opL u =& f \qquad  &\text{in } (0,T) \times \Omega ,\\
 u =& g  &\text{on } [0,T] \times \complement \Omega ,\\
 u =& g(0,\cdot)   &\text{on } \{0\} \times \Omega\,,
\end{array} 
\right.
\end{equation}
if \eqref{eq:abstract-initial-a} holds, if $u-g \in W(0,T;\HOmRd)$ and if $u(0)=g(0) \in \HOmk$.
\end{defi}
\begin{cor}\label{cor:wellposedness-para-nonzero}
Assume that $k_t$ is of the form \eqref{eq:zeitabh-kern}, where $k$ satisfies
\eqref{eq:integrierbarkeitsbedingung-k_s} and \eqref{eq:domination_by_ktilde}.
Then there is a unique solution $u \in W(0,T;\HOmk)$ of \eqref{eq:dirichlet-para-nonzero}.
\end{cor}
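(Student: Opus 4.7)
The plan is to reduce the non-homogeneous complement-data problem to the zero complement-data setting and then invoke Theorem \ref{thm:wellposedness-para}, following the same pattern used in the elliptic case in Theorem \ref{thm:existence-lax-milgram}. Given $g \in W(0,T;\HOmk)$, I set $v := u - g$ and ask $v$ to satisfy, at least formally,
\begin{equation*}
 \pt v + \opL v \;=\; f - \pt g - \opL g \;=:\; \widetilde f,
 \qquad v(0)=0, \qquad v=0 \text{ on } [0,T]\times \complement\Omega.
\end{equation*}
Once existence and uniqueness of $v\in W(0,T;\HOmRd)$ is established, $u := v+g$ belongs to $W(0,T;\HOmk)$ (use $\HOmRd \subset \HOmk$, which is clear from the definitions), satisfies $u-g \in W(0,T;\HOmRd)$, $u(0)=g(0)$, and the variational identity~\eqref{eq:abstract-initial-a}; so it is a solution in the sense of Definition \ref{defi:dir-par-nonzero}. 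Uniqueness for $u$ follows from uniqueness for $v$ because the difference of any two solutions lies in $W(0,T;\HOmRd)$ and solves the homogeneous problem of Theorem~\ref{thm:wellposedness-para}.

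The main task is therefore to show that $\widetilde f \in L^2(0,T;\HdualOmRd)$, so that Theorem~\ref{thm:wellposedness-para} actually applies. First, $f \in L^2(Q_T) \subset L^2(0,T;L^2(\Omega)) \hookrightarrow L^2(0,T;\HdualOmRd)$. Second, the embedding $\HOmRd \hookrightarrow \HOmk$ is continuous, hence by duality $\HOmk^*\hookrightarrow \HdualOmRd$; since $g\in W(0,T;\HOmk)$ we have $\pt g \in L^2(0,T;\HOmk^*) \hookrightarrow L^2(0,T;\HdualOmRd)$. Third, to handle $\opL g(t,\cdot) = \cE^{k_t}(g(t),\cdot)$, I repeat the splitting $I_1+I_2$ from the proof of Theorem~\ref{thm:existence-lax-milgram}: using $k_t = a(t,x,y)k(x,y)$ with $a\in [\tfrac12,1]$ symmetric, the constants $A_1,A_2$ in \eqref{eq:domination_by_ktilde} transfer uniformly in $t$, and one obtains
\begin{equation*}
 \bet{\cE^{k_t}(g(t),\varphi)} \;\le\; C\bigl(A_1,A_2\bigr)\,\bigl[g(t),g(t)\bigr]_{V(\Omega;k)}^{1/2}\,\norm{\varphi}_{\HRdk}
\end{equation*}
for every $\varphi \in \HOmRd$. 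Squaring and integrating in $t$, and using $g\in L^2(0,T;\HOmk)$, yields $\opL g \in L^2(0,T;\HdualOmRd)$.

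With $\widetilde f$ in the correct dual space, Theorem~\ref{thm:wellposedness-para} produces a unique $v\in W(0,T;\HOmRd)$ solving the reduced problem with zero initial value; the G{\aa}rding inequality of Lemma~\ref{lem:garding} and the continuity shown in Lemma~\ref{lem:Operator-Bilinearform} apply verbatim to $\cE^{k_t}$ because $a(t,\cdot,\cdot)$ is bounded and symmetric, so that $k_{t,s}=a\,k_s$ and $k_{t,a}=a\,k_a$. Setting $u := v+g$ finishes existence. For uniqueness, any two solutions $u_1,u_2$ give $u_1-u_2 \in W(0,T;\HOmRd)$ with $(u_1-u_2)(0)=0$ and $\pt(u_1-u_2)+\opL(u_1-u_2)=0$, so Theorem~\ref{thm:wellposedness-para} forces $u_1=u_2$.

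The main obstacle I expect is the bookkeeping around the two different Hilbert spaces $\HOmk$ and $\HOmRd$ for the complement data and for test functions, and in particular verifying the continuous inclusion $\HOmk^*\hookrightarrow \HdualOmRd$ together with the correct interpretation of the initial condition $u(0)=g(0)$ through the embedding $W(0,T;\HOmk)\hookrightarrow C([0,T];L^2(\R^d))$. Everything else is a straightforward adaptation of the elliptic reduction argument.
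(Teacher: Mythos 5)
Your proof follows the same reduction as the paper's: set $v = u - g$, solve the zero-complement-data parabolic problem of \autoref{thm:wellposedness-para} with right-hand side $f - \partial_t g - \opL g$, and recover $u = v + g$. The paper's proof is terser and does not spell out why $f - \partial_t g - \opL g$ lies in $L^2(0,T;\HdualOmRd)$, so your explicit verification of that point makes the argument, if anything, slightly more complete than the one given in the paper.
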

\begin{proof}
 By \autoref{thm:wellposedness-para} there is a unique solution $u \in W(0,T;H_\Omega(\R^d;k))$ to the problem
\begin{equation}
 \left\{
\begin{array}{rll}
 \partial_t u + \opL u =& f - \partial_t g - \opL g \qquad  &\text{in } (0,T) \times \Omega ,\\
 u =& 0  &\text{on } [0,T] \times \complement \Omega ,\\
 u =& 0   &\text{on } \{0\} \times \Omega.
\end{array} 
\right.
\end{equation}
Then $\widetilde u = u + g$ satisfies $\widetilde u \in W(0,T;H(\Omega;k))$ and \eqref{eq:dirichlet-para-nonzero}.
\end{proof}
Let us emphasize that the conditions on the complement data $g$ are far from being optimal. On the one hand, it would be desirable to relax the condition on the spatial decay imposed by $g(t,\cdot) \in L^2(\complement \Omega)$ as in \autoref{sec:lax_milgram}. On the other hand, one could try to follow the program as in the case of second order parabolic equations (e.g. \cite{LiMa72}) in order to relax the regularity assumptions of $g$ with respect to $t$ and $x$.
\section{Examples of kernels}\label{sec:examples}
We provide several examples of kernels $k:\R^d\times\R^d\to[0,\infty]$ to which
the theory above can be applied. Recall that all kernels studied in this work
satisfy assumption \eqref{eq:integrierbarkeitsbedingung-k_s}. Further, we
distinguish two cases. We call a kernel $k$ {\bf integrable} if, for every
$x\in \R^d$ the quantity $\int_{\R^d} k_s(x,y) \d y$ is finite and the  
mapping $x\mapsto \int_{\R^d} k_s(x,y) \d y$ is locally integrable. We
call a kernel {\bf non-integrable} if it is not integrable in the sense above.
At the end of this section we list all examples together with their
corresponding properties.
 
\subsection{Integrable kernels}

Let us start with a simple observation. Every kernel with the property that the
antisymmetric part is of the form $k_a(x,y)=g(x-y)$ for some function $g$
satisfies the assumption \eqref{eq:definitheitsbedingung}. This follows from
the fact that for $x \in \R^d$ 
\begin{equation*}
 \int\limits_{\complement B_\eps (x)}k_a(x,y)\d
y=\int\limits_{\complement B_\eps (x)}g(x-y) \d
y=\int\limits_{\complement B_\eps (0)}g(z)dz=0 \,.
\end{equation*}

\begin{enumerate}
 \item \label{exa:ball-1} $k(x,y) := \mathbbm{1}_{B_1}(x-y)$. The kernel is
obviously symmetric. Thus it satisfies \eqref{eq:definitheitsbedingung}. It
also satisfies the Poincar\'{e}-Friedrichs inequality
\eqref{eq:poincare-type-ineq} as
shown in \autoref{subsec:poincare}.

\item \label{exa:B2-ohne-B1} $k(x,y) := \mathds{1}_{B_R\setminus
B_r}(x-y)$ for some numbers $0 < r < R$. Again,
\eqref{eq:definitheitsbedingung} and the Poincar\'{e}-Friedrichs inequality
\eqref{eq:poincare-type-ineq} hold. 

\item \label{exa:ball-anti} $k(x,y) :=
\mathds{1}_{B_1 \cap \R^d_+}(x-y)$. Symmetrization leads to
\begin{align*}
k_s(x,y) &= \tfrac12 \mathds{1}_{B_1 \cap \R^d_+}(x-y) +  \tfrac12
\mathds{1}_{B_1 \cap \R^d_+}(y-x) = \tfrac12  \mathds{1}_{B_1}(x-y) \\
k_a(x,y) &= \tfrac12 \mathds{1}_{B_1 \cap \R^d_+}(x-y) -  \tfrac12
\mathds{1}_{B_1 \cap
\R^d_-}(x-y) \,. 
\end{align*}
Since $k$ depends only on $x-y$, condition \eqref{eq:definitheitsbedingung}
holds. Concerning the Poincar\'{e}-Friedrichs inequality
\eqref{eq:poincare-type-ineq},
$k$ is not different from example \eqref{exa:ball-1}.

\item \label{exa:cones-integrable} This example is more general than
Example \ref{exa:ball-anti}. Set $k(x,y) := \mathds{1}_{B_1}(x-y)
\mathds{1}_{\mathcal{C}}(x-y)$ where the set $\mathcal{C}$ is defined by
$\mathcal{C}=\{h \in \R^d
|\, \tfrac{h}{|h|} \in I\}$ and $I$ is an arbitrary
nonempty open subset of $S^{d-1}$. If $I$ is of the form $I=B_r(\xi) \cap
S^{d-1}$ for
some $\xi \in S^{d-1}$ and some $r >0$, then $\mathcal{C}$ is a cone. In any
case, we obtain 
\begin{align*}
k_s(x,y) &= \tfrac12 \mathds{1}_{B_1 \cap (\mathcal{C} \cup
-\mathcal{C})}(x-y)\,, \\
k_a(x,y) &= 
\tfrac12 \mathds{1}_{B_1 \cap \mathcal{C}}(x-y) - \tfrac12
\mathds{1}_{B_1 \cap -\mathcal{C}}(x-y) \,. 
\end{align*}

\end{enumerate}

In the examples above, $k(x,y)$ depends only on $x-y$. As a
result, one can choose $L(z)=k(0,y-x)$ in the condition
\eqref{eq:vergleichbarkeit-kern-faltung}. Let us look at examples where this
is not possible.

\begin{enumerate}
\setcounter{enumi}{4}
 
\item \label{exa:integrable-mit-funktion-g} $k(x,y):=g(x,y)
\mathds{1}_{B_1}(x-y)$, where $g$ is any measurable bounded function satisfying
$g \geq c$ almost everywhere for some constant $c >0$. Note
that $g$ does not need to be symmetric. Then 
\begin{align*}
k_s(x,y) &= \tfrac12 (g(x,y)+g(y,x)) \mathds{1}_{B_1}(x-y) \\
k_a(x,y) &= \tfrac12 (g(x,y)-g(y,x)) \mathds{1}_{B_1}(x-y)  \,. 
\end{align*}
Condition \eqref{eq:definitheitsbedingung} does not hold in general but
\eqref{eq:ka-square-domin-by-ks} holds because $k_s(x,y) \geq
c \mathds{1}_{B_1}(x-y)$ which allows us to apply the
Poincar\'{e}-Friedrichs inequality
\eqref{eq:poincare-type-ineq} choosing $L(z) = c \mathds{1}_{B_1}(z)$ in
\eqref{eq:vergleichbarkeit-kern-faltung}.  
\end{enumerate}

\begin{enumerate}
\setcounter{enumi}{5}
\item \label{exa:definitheit-gilt-nicht} Here, we set $d=1$ and define a
kernel $k:\R\times\R\to[0,\infty)$ as follows. Define 
\[ D= [-1,0] \times [0,1] \cup \{(x,y) \in \R^2 |\, (x \leq y \leq x+1)\}  \,. \]
Set $k(x,y):= 2 \cdot
\mathds{1}_{D}(x,y)$. Then the antisymmetric part of $k$ is given by 
\[k_a(x,y)= \mathds{1}_{D}(x,y) - \mathds{1}_{(-D)}(x,y)\,. \]
Due to the construction of $D$ we obtain for $|x|>1$ $\lim\limits_{\eps \to 0+}
\int_{\complement B_\eps (x)}k_a(x,y)\d y = 0$ whereas for $x \in (-1,1)$ we
obtain $\lim\limits_{\eps \to 0+}
\int_{\complement B_\eps (x)}k_a(x,y)\d y = -x$, which implies that $k$
does
not satisfy condition \eqref{eq:definitheitsbedingung}. Though, conditions
\eqref{eq:ka-square-domin-by-ks} and \eqref{eq:vergleichbarkeit-kern-faltung}
hold true because of $k_s(x,y) \geq \mathds{1}_{B_1}(x-y)$. 

\item \label{exa:definitheit-ohne-x-y} Again, set $d=1$. We define $k(x,y)$ by
$k(x,y) = 2\cdot\mathds{1}_{(-4,4)}(x-y) + k_a(x,y)$ where
\[k_a(x,y)=\begin{cases}
          g(x-1,y-3) &\text{if }x<y\\
          -g(y-1,x-3) &\text{else ,}
         \end{cases} \]
and
\[g(x,y)=\sgn(xy)\mathbbm{1}_{(-1,1)\times(-1,1)}(x,y) \,.\]
By construction $k_a$ is antisymmetric and satisfies condition
\eqref{eq:definitheitsbedingung}. 
Thus $k$ is not a function of  $x-y$ but still
satisfies \eqref{eq:definitheitsbedingung}. Conditions
\eqref{eq:ka-square-domin-by-ks} and \eqref{eq:vergleichbarkeit-kern-faltung}
hold true, too.
\end{enumerate}

\subsection{Non-integrable kernels} 
Here are several examples of kernels $k:\R^d \times \R^d \to
[0,\infty]$ with a singularity at the diagonal. See above for our definition of 
when we call a kernel non-integrable. Recall that we want all
examples to satisfy \eqref{eq:integrierbarkeitsbedingung-k_s}. Throughout this
section (with one exception) $\alpha \in (0,2)$ is an arbitrary fixed number. 
\begin{enumerate}[resume]
 \item  \label{exa:standard-singular} $k(x,y):= |x-y|^{-d-\alpha}$. 
 Obviously, $k$ is symmetric and satisfies
\eqref{eq:integrierbarkeitsbedingung-k_s}. Conditions 
\eqref{eq:definitheitsbedingung} and \eqref{eq:ka-square-domin-by-ks} hold due to the
symmetry. \autoref{lem:poincare-singulaer} can be directly applied. This
kernel $k$ is very special because the space $\HRdk$ is isomorphic to the
fractional Sobolev space
$H^{\alpha/2}(\R^d)$ (cf. \autoref{bem:function-spaces}b). There is a constant $C\geq 1$, independent of $\alpha$,
such that for all $v \in C^\infty_c(\R^d)$
\[ C^{-1} \|v\|_{H^{\alpha/2}(\R^d)} \leq
\alpha(2-\alpha) \|v\|_{\HRdk} \leq C
\|v\|_{H^{\alpha/2}(\R^d)} \,,\]
where $\|v\|^2_{H^{\alpha/2}(\R^d)} = \int (1+|\xi|^2)^{\alpha/2} |\widehat{v}(\xi)|^2
\d
\xi$. Thus, for fixed $v \in
C^\infty_c(\R^d)$,
\begin{align*}
\alpha(2-\alpha)
\left[v,v\right]_{\HRdk}&\longrightarrow\left[v,v\right]_{H^1(\R^d)} & \text{for
}&\alpha\rightarrow 2^-\\
 \alpha(2-\alpha) {\norm v}_{\HRdk}&\longrightarrow{\norm
v}_{L^2(\R^d)}&\text{for
}&\alpha\rightarrow 0^+.
\end{align*}
Similar results hold true for $\R^d$ replaced by a bounded
domain \cite{Brezis_Mironescu02,Mazya02}.

\item \label{exa:standard-singular-symm-cones} Let $I$ be an arbitrary
nonempty open subset of $S^{d-1}$ with the property $I=-I$. Set $\mathcal{C}=\{h
\in \R^d
|\, \tfrac{h}{|h|} \in I\}$ and $k(x,y):= |x-y|^{-d-\alpha}
\mathds{1}_\mathcal{C}(x-y)$. Again, $k$ is symmetric and satisfies
\eqref{eq:integrierbarkeitsbedingung-k_s}. It turns out that $k$ is comparable
to example \eqref{exa:standard-singular} in the sense of
\autoref{lem:poincare-singulaer}. The only difference is that the constant
$\lambda$ depends on $I$.

\item \label{exa:standard-singular-nonsymmetric}
$k(x,y) := |x-y|^{-d-\alpha}\mathds{1}_{\R^d_+}(x-y)$. This example is
different from example \eqref{exa:standard-singular-symm-cones} because $k$ is
not symmetric anymore. The symmetric and antisymmetric parts are given by
\begin{align*}
k_s(x,y) &= \tfrac12 |x-y|^{-d-\alpha} \\
k_a(x,y) &= |x-y|^{-d-\alpha} \big( \tfrac12 \mathds{1}_{\R^d_+}(x-y) - 
\tfrac12 \mathds{1}_{\R^d_-}(x-y)\big) \,. 
\end{align*}
\autoref{lem:poincare-singulaer} can still be applied but conditions
\eqref{eq:ka-square-domin-by-ks} and \eqref{eq:domination_by_ktilde} do
not hold. Condition \eqref{eq:definitheitsbedingung} does hold, though.

 \item \label{exa:singular-nonsymmetric} Assume $0 < \beta <
\frac{\alpha}{2}$ and $g:\R^d \times \R^d \to [-K,L]$ measurable for some $K,L
>0$. Define 
 \[k(x,y):=|x-y|^{-d-\alpha}+g(x,y) \mathbbm{1}_{B_1}(x-y)|x-y|^{-d-\beta}\]
Additionally, we assume that $k$ is nonnegative. This property does not
follow in general under the assumptions above. However, for every choice of $K$
there are many admissible cases with $\inf g = -K$. We obtain
$k_s(x,y)\geq
\frac{1}{2} |x-y|^{-d-\alpha}$ for $|x-y| \leq (2K)^\frac{-1}{\alpha-\beta}$.
Since $k_s$ is nonnegative, we can apply \autoref{lem:poincare-singulaer} and 
\eqref{eq:poincare-type-ineq} holds. Further 
 \eqref{eq:domination_by_ktilde} is satisfied with
$\widetilde{k}(x,y)=|x-y|^{-d-\alpha} \mathbbm{1}_{B_1}(x-y)$ and
$A_1=1$. Conditions \eqref{eq:definitheitsbedingung} and
\eqref{eq:ka-square-domin-by-ks} hold for some but not for all choices of $g$. 
\end{enumerate}

The following example is an extension and, at the same time, a special case of
Example \eqref{exa:singular-nonsymmetric}. Example \eqref{exa:singular-cones}
shows that our
condition \eqref{eq:domination_by_ktilde} is indeed a relaxation of
\eqref{eq:ka-square-domin-by-ks} or \cite[(1.1)]{SchillingWang2011}.

\begin{enumerate}[resume]
\item \label{exa:singular-cones} Assume $0 < \beta <
\frac{\alpha}{2}$. Let $I_1, I_2$ be arbitrary nonempty disjoint
open subsets of $S^{d-1}$ with $I_1=-I_1$ and
$|-I_2 \setminus I_2 | > 0$.  Set
$\mathcal{C}_j=\{h \in \R^d
|\, \tfrac{h}{|h|} \in I_j\}$ for $j \in \{1,2\}$. Set 
\[k(x,y)=
|x-y|^{-d-\alpha}\mathds{1}_{\cC_1}(x-y)
+|x-y|^{-d-\beta}\mathds{1}_{\cC_2}(x-y) \mathds{1}_{B_1}(x-y)\,.\]
The symmetric and antisymmetric parts of $k$ are given by 
\begin{align*}
k_s(x,y) &=
|x-y|^{-d-\alpha}\mathds{1}_{\cC_1}(x-y)
+\tfrac12 |x-y|^{-d-\beta}\mathds{1}_{\cC_2 \cup (-\cC_2)}(x-y)
\mathds{1}_{B_1}(x-y)\,, \\ 
k_a(x,y) &= \tfrac12 |x-y|^{-d-\beta}\mathds{1}_{\cC_2\cap B_1}(x-y) - \tfrac12
|x-y|^{-d-\beta}\mathds{1}_{(-\cC_2)\cap B_1}(x-y)\,.
\end{align*}
Let us show that condition \eqref{eq:ka-square-domin-by-ks} does not hold, i.e.
\eqref{eq:ka-square-domin-by-ktilde} is not satisfied
for $\widetilde{k}=k_s$. Let $h, h_a, h_s, \widetilde{h}:\R^d \to [0,\infty]$ be
defined
by $h(x-y)=k(x,y)$ and $h_a, h_s, \widetilde{h}$ accordingly. Note that $\bet{h_a} = h_s$ on $\cC_2 \cup -\cC_2$. Then 
\begin{align*}
\sup_{x\in \R^d}&\int\limits\limits_{\{k_s(x,y)\neq
0\}} \frac{k_a(x,y)^2}{k_s(x,y)}\d y =  \int\limits_{\R^d}
\frac{h_a^2(z)}{h_s(z)} \mathds{1}_{B_1}(z) dz = \int\limits_{\{\cC_2\cup
(-\cC_2)\}}
\frac{h_a^2(z)}{h_s(z)} \mathds{1}_{B_1}(z) dz \\
&=\int\limits_{\{\cC_2\cup (-\cC_2)\}}h_s(z) \mathds{1}_{B_1}(z) dz
=\int\limits_{\{\cC_2\cup
(-\cC_2)\}} \frac12  |z|^{-d-\beta} \mathds{1}_{B_1}(z) dz = +\infty 
\end{align*}
Let us explain why
\eqref{eq:ktilde_comparability} and \eqref{eq:ka-square-domin-by-ktilde} hold
for $\widetilde{k}(x,y)=|x-y|^{-d-\alpha}$. \eqref{eq:ktilde_comparability}
follows easily from $k_s(x,y) \geq |x-y|^{-d-\alpha}\mathds{1}_{\cC_1}(x-y)$,
the constant $A_1$ needs to be chosen in dependence of $\cC_1$ resp. $I_1$. 
Let us check \eqref{eq:ka-square-domin-by-ktilde}:
\begin{align*}
\sup_{x\in \R^d} \int \frac{k_a^2(x,y)}{\widetilde{k}(x,y)}\d y
&=  \int\limits_{\R^d} \frac{h_a^2(z)}{\widetilde{h}(z)}dz
 =\int\limits_{\{\cC_2\cup
-(\cC_2)\}}\frac{h_a^2(z)}{\widetilde{h}(z)} \mathds{1}_{B_1}(z) dz \\
&= \int\limits_{\{\cC_2\cup
(-\cC_2)\}} \frac14  |z|^{-d-2\beta+\alpha} \mathds{1}_{B_1}(z) dz \leq A_2 \,, 
\end{align*}
where $A_2$ depends on $I_2$ and $\alpha/2 - \beta$. {\bf Note:} If we modify
the example by choosing $I_1 = S^{d-1}$, i.e.
$\cC_1=\R^d$, then condition \eqref{eq:ka-square-domin-by-ks} does hold. 

\item \label{exa:cusp} The following example appears in \cite[Example
12]{DydaKassmann2011}. Assume $0 < b < 1$ and $0 < \alpha' < 1+
\frac1b$. Define $\Gamma=\{(x_1,x_2)\in\R^2| \bet{x_1}\geq \bet{x_2}^b \text{ or
}
\bet{x_2}\geq \bet{x_1}^b\}$ and set 
\[k(x,y)= \mathds{1}_{\Gamma\cap B_1}(x-y)\bet{x-y}^{-d-\alpha'}\,. \]
Note that the kernel $k$ depends only on $x-y$ and is symmetric but condition 
\eqref{eq:integrierbarkeitsbedingung-k_s} is not obvious. Using integration in
polar coordinates one can show that there is $C\geq 1$ such that for $\alpha =
\alpha' - (1/b -1)$, $h(z)=k(x,x+z)$ and every $r \in (0,1)$
\[ r^{2} \int\limits_{B_r} |z|^2 h(z) \d z + \int\limits_{\R^d
\setminus B_r} h(z) \d z \leq C r^{-\alpha} \,. \]
Thus $\alpha$ is the effective order of differentiability of
the corresponding integro-differential operator. In \cite{DydaKassmann2011}
the comparability of the quadratic forms needed for
\autoref{lem:poincare-singulaer} is established. From the point of view of this
article the kernel $k$ is very similar to the kernel $|x-y|^{-d-\alpha}
\mathds{1}_{B_1}(x-y)$. Of course, one can now produce related nonsymmetric
examples. 
\item  \label{exa:schilling} The following example is taken from \cite{FuUe12},
\cite{SchillingWang2011}. It provides a nonsymmetric kernel $k$ with a
singularity on the diagonal which is non-constant. Assume
$0<\alpha_1\leq\alpha_2<2$ and let $\alpha:\R^d \to [\alpha_1, \alpha_2]$ be a
measurable function. We assume that $\alpha$ is continuous and that the modulus
of continuity $\omega$ of the function $\alpha$ satisfies 
 \[ \int\limits_0^1\frac{(\omega(r)|\log r|)^2}{r^{1+\alpha_2}}dr<\infty \,.\]
Note that, as a result, there are $\beta \in (0,1)$ and $C_H>0$ such that $[
\alpha ]_{C^{0,\beta}(\R^d)} \leq C_H$. Let $b:\R^d \to \R$ be another
measurable function which is bounded between two positive constants and
satisfies $|b(x)-b(y)| \leq c |\alpha(x) - \alpha(y)|$ as long as $|x-y|\leq 1$
for some constant $c>0$. Finally, set
 \[k(x,y)=b(x)|x-y|^{-d-\alpha(x)} \,.\]
In \cite{SchillingWang2011} it is proved, that $k$ satisfies
\eqref{eq:integrierbarkeitsbedingung-k_s} and \eqref{eq:ka-square-domin-by-ks}.
Since $\alpha$ is bounded from below by $\alpha_1$, the
Poincar\'{e}-Friedrichs inequality
\eqref{eq:poincare-type-ineq} holds. Condition \eqref{eq:sym-echt-groesser-k-a}
does not hold for this example since
$\displaystyle{\lim_{\bet{x-y}\to\infty}\frac{\bet{k_a(x,y)}}{k_s(x,y)}=1}$.\\
Let us slightly modify the example and look at $k'(x,y) = \mathbbm{1}_{B_R}(x-y)
k(x,y)$ for some $R \gg 1$. Then
conditions
\eqref{eq:integrierbarkeitsbedingung-k_s}, \eqref{eq:ka-square-domin-by-ks}
and \eqref{eq:poincare-type-ineq} still hold true for $k'$. 

\begin{lem}The kernel $k'$ satisfies \eqref{eq:sym-echt-groesser-k-a}.
\end{lem}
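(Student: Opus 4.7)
The plan is to establish the pointwise bound $\bet{k'_a(x,y)} \leq D^{-1} k'_s(x,y)$ by a direct algebraic comparison. For $(x,y)$ with $\delta := \bet{x-y} < R$, set $a = b(x)\delta^{-d-\alpha(x)}$ and $c = b(y) \delta^{-d-\alpha(y)}$, so that $k'_s(x,y) = \tfrac{1}{2}(a+c)$ and $k'_a(x,y) = \tfrac{1}{2}(a-c)$ (and both vanish outside $B_R$). Then
\[
\frac{\bet{k'_a(x,y)}}{k'_s(x,y)} \;=\; \frac{\bet{a-c}}{a+c} \;=\; 1 - \frac{2\min(a,c)}{a+c}\,.
\]
Thus it suffices to exhibit a constant $M \geq 1$, independent of $x,y$, such that $a/c \leq M$ (and by symmetry $c/a \leq M$) whenever $0 < \delta < R$: this yields $\bet{k'_a}/k'_s \leq 1 - 2/(M+1) =: D^{-1} < 1$.

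Write $a/c = (b(x)/b(y))\, \delta^{\alpha(y)-\alpha(x)}$. The first factor is bounded by $M_b := (\sup b)/(\inf b)$, which is finite since $b$ takes values between two positive constants. For the second factor I would take logarithms and bound $\bet{(\alpha(y)-\alpha(x))\log \delta}$ in two regimes. For $\delta \in (0,1]$, I use the Hölder continuity $[\alpha]_{C^{0,\beta}(\R^d)} \leq C_H$ (with $\beta>0$ supplied by the integral modulus-of-continuity hypothesis on $\alpha$ quoted in the example) to estimate
\[
\bet{(\alpha(y)-\alpha(x))\log \delta} \;\leq\; C_H\, \delta^\beta\, \bet{\log \delta}\,,
\]
which is uniformly bounded on $(0,1]$ because $\delta^\beta \bet{\log \delta} \to 0$ as $\delta \to 0^+$. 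For $\delta \in (1,R)$, I simply bound $\bet{\alpha(y)-\alpha(x)} \leq \alpha_2$ and $\bet{\log \delta} \leq \log R$. Exponentiating and multiplying by $M_b$ produces the desired uniform constant $M$.

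The main obstacle is the regime $\delta \to 0^+$: the logarithmic singularity of $\log \delta$ must be absorbed by the Hölder decay of $\alpha(y)-\alpha(x)$, and this is precisely where a positive Hölder exponent for $\alpha$ is indispensable — a mere continuous $\alpha$ would not suffice. The cutoff $\mathbbm{1}_{B_R}(x-y)$ plays no role in this short-range argument, but it is essential for the complementary regime $\delta \to \infty$, where $\delta^{\alpha(y)-\alpha(x)}$ can be made arbitrarily large; this is precisely the mechanism by which the uncut kernel $k$ fails \eqref{eq:sym-echt-groesser-k-a}, matching the computation $\lim_{\bet{x-y}\to\infty} \bet{k_a(x,y)}/k_s(x,y) = 1$ noted in the statement of the example.
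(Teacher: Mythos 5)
Your proof is correct and relies on the same essential ingredients as the paper's: the H\"older continuity of $\alpha$ (supplied by the modulus-of-continuity hypothesis) to control $\delta^{\alpha(y)-\alpha(x)}$ as $\delta=\bet{x-y}\to 0^+$, the cutoff at $R$ together with boundedness of $\alpha$ for $1<\delta<R$, and the two-sided bound on $b$. What you do differently is purely organizational, but it is a genuine improvement: by writing $\bet{k'_a}/k'_s=\bet{a-c}/(a+c)=1-2\min(a,c)/(a+c)$ and reducing the whole question to a uniform two-sided bound on the single ratio $a/c$, you collapse the paper's four-way case split (short vs.\ long range crossed with the sign of $k'_a$) into one calculation, and you make transparent exactly where each hypothesis enters. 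One small point worth flagging: to conclude $D^{-1}<1$ with $D>1$ you need $M>1$ strictly, but this is automatic unless $a=c$ identically, in which case $k'_a\equiv 0$ and the inequality is trivial; similarly, the remark that ``mere continuity would not suffice'' is a bit strong as stated (a modulus $\omega(r)\sim 1/\bet{\log r}$ would also do), though the H\"older bound the paper provides is certainly the natural thing to use.
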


\begin{proof}  We have to show
that $\frac{\bet{k'_a(x,y)}}{k'_s(x,y)}\leq \Theta<1$
for all $x,y\in\{\bet{x-y}<R\}$. By assumption there are
$c_1,c_2>0$ such that
\[ c_1\leq b(x)\leq c_2 \quad \text{ for all } x\in\R^d\,. \] 
We can assume that
$\alpha(x)\leq \alpha(y)$
due to the symmetry of $\frac{\bet{k'_a(x,y)}}{k'_s(x,y)}$.\\
\textit{Case 1a:} $\bet{x-y}\leq1$ and $k'_a(x,y)>0$.    
Then
\begin{align*}
 \frac{\bet{k'_a(x,y)}}{k'_s(x,y)}&=
\frac{b(x)|x-y|^{-d-\alpha(x)}-b(y)|x-y|^{-d-\alpha(y)}}{b(x)|x-y|^{-d-\alpha(x)
}+b(y)|x-y|^{-d-\alpha(y)}} \\
 &=\frac{b(x)-b(y)|x-y|^{\alpha(x)-\alpha(y)}}{b(x)+b(y)|x-y|^{
\alpha(x)-\alpha(y)}} \leq
\frac{1-\frac{c_1}{c_2}}{1+\frac{c_1}{c_2}}=:\Theta_1
\end{align*}
\textit{Case 1b:} $\bet{x-y}\leq1$ and $k'_a(x,y)<0$.  
Then
\begin{align*}
\phantom{xxx} \frac{\bet{k'_a(x,y)}}{k'_s(x,y)}&=
\frac{b(y)|x-y|^{-d-\alpha(y)}-b(x)|x-y|^{-d-\alpha(x)}}{b(x)|x-y|^{-d-\alpha(x)}+b(y)|x-y|^{-d-\alpha(y)}}
 =\frac{b(y)-b(x)|x-y|^{\alpha(y)-\alpha(x)}}{b(y)+b(x)|x-y|^{\alpha(y)-\alpha(x)}}\\                                  
\end{align*}
Since $\bet{\alpha(y)-\alpha(x)}\leq C_H \bet{x-y}^\beta$, we obtain
\[ |x-y|^{\alpha(y)-\alpha(x)}\geq \bet{x-y}^{C_H \bet{x-y}^\beta}\geq \delta(C_H, \beta)>0.\] Thus
\begin{equation*}
 \frac{\bet{k'_a(x,y)}}{k'_s(x,y)} \leq
\frac{1-\frac{c_1}{c_2}\delta}{1+\frac{c_1}{c_2}\delta}=: \Theta_2
\end{equation*}
\textit{Case 2a:} \,$1<\bet{x-y}<R$ and $k'_a(x,y)<0$.  
Then
\begin{align*}
 \frac{\bet{k'_a(x,y)}}{k'_s(x,y)}&=
\frac{b(y)|x-y|^{-d-\alpha(y)}-b(x)|x-y|^{-d-\alpha(x)}}{b(x)|x-y|^{-d-\alpha(x)
}+b(y)|x-y|^{-d-\alpha(y)}} \\
 &=\frac{b(y)-b(x)|x-y|^{\alpha(y)-\alpha(x)}}{b(y)+b(x)|x-y|^{
\alpha(y)-\alpha(x)}} \leq \frac{1-\frac{c_1}{c_2}}{1+\frac{c_1}{c_2}}=\Theta_1
\end{align*}
\textit{Case 2b:} \,$1<\bet{x-y}<R$ and $k'_a(x,y)>0$.    
Then
\begin{align*}
 \frac{\bet{k'_a(x,y)}}{k'_s(x,y)}&=
\frac{b(x)|x-y|^{-d-\alpha(x)}-b(y)|x-y|^{-d-\alpha(y)}}{b(x)|x-y|^{-d-\alpha(x)
}+b(y)|x-y|^{-d-\alpha(y)}} \\
 &=\frac{b(x)-b(y)|x-y|^{\alpha(x)-\alpha(y)}}{b(x)+b(y)|x-y|^{
\alpha(x)-\alpha(y)}} \leq  \frac{1-\frac{c_1}{c_2}
R^{\alpha_1-\alpha_2}}{1+\frac{c_1}{c_2} R^{\alpha_1-\alpha_2} }=:\Theta_3 \qedhere
\end{align*}
\end{proof}
We have shown than $k'$ satisfies all conditions needed in order to apply
\autoref{thm:Fredholm}. 
\end{enumerate}

Although we have provided
several different examples, our class is still rather small. All examples of
non-integrable kernels from above relate, in one way or another,
to the standard kernel
$|x-y|^{-d-\alpha}$ for some $\alpha \in (0,2)$ and the Sobolev-Slobodeckij
space $H^{\alpha/2}(\R^d)$. We could also study examples with kernels which
relate to a generic standard kernel $|x-y|^{-d} \phi(|x-y|^2)^{-1}$ where
$\phi$ itself can be chosen from a rather general class of
functions, e.g. the class of complete Bernstein functions.

Let us summarize the examples from above in a table. Recall that all examples
satisfy \eqref{eq:integrierbarkeitsbedingung-k_s}. In the tabular below, the
symbol {\bf ?} indicates that the answer depends on
the concrete specification of the example.

\smallskip

\begin{center}
   \begin{tabular}{ c  c  c  c  c  c  c  c  c  c  c  c 
c  c  c  c }
\hline
{\bf Examples:} & \eqref{exa:ball-1} &
\eqref{exa:ball-anti} & \eqref{exa:B2-ohne-B1} &
\eqref{exa:cones-integrable} & \eqref{exa:integrable-mit-funktion-g} &
\eqref{exa:definitheit-gilt-nicht} & \eqref{exa:definitheit-ohne-x-y} & \vdots &
\eqref{exa:standard-singular} & \eqref{exa:standard-singular-symm-cones} &
\eqref{exa:standard-singular-nonsymmetric} &  \eqref{exa:singular-nonsymmetric}
& \eqref{exa:singular-cones} &    \eqref{exa:cusp} & \eqref{exa:schilling} \\

\toprule

\eqref{eq:poincare-type-ineq} & \checkmark & \checkmark & \checkmark &
\checkmark & \checkmark & \checkmark & \checkmark  & \vdots &
\checkmark &  \checkmark & \checkmark & \checkmark & \checkmark & \checkmark &
\checkmark \\
\hline
\eqref{eq:definitheitsbedingung} & \checkmark & \checkmark & \checkmark &
\checkmark & {\bf ?} & \njet & \checkmark  & \vdots &
\checkmark &  \checkmark & \checkmark & {\bf ?}  & \checkmark &
\checkmark &
\njet \\
\hline
\eqref{eq:domination_by_ktilde} & \checkmark & \checkmark & \checkmark &
\checkmark & \checkmark & \checkmark & \checkmark  & \vdots &
\checkmark &  \checkmark & \njet & \checkmark & \checkmark & \checkmark
&
\checkmark \\
\hline
\eqref{eq:ka-square-domin-by-ks}& \checkmark &
\checkmark & \checkmark &
\checkmark & \checkmark & \checkmark & \checkmark  & \vdots &
\checkmark &  \checkmark & \njet & {\bf ?} & \njet & \checkmark
&
\checkmark \\
\hline
symmetry& \checkmark & \checkmark & \njet &
\njet & {\bf ?} & \njet & \njet  & \vdots &
\checkmark &  \checkmark & \njet & {\bf ?} & \njet &
\checkmark & \njet \\
\bottomrule

{} & \multicolumn{7}{c}{\bf integrable kernels} & &
\multicolumn{7}{c}{\bf non-integrable kernels}\\
\end{tabular}
\end{center}
\subsection*{Index of Conditions}\enlargethispage{1em}

\label{index}
\begin{tabbing}
Condition\phantom{s} \=\eqref{eq:integrierbarkeitsbedingung-k_s} can be found \=on p.~\pageref*{eq:integrierbarkeitsbedingung-k_s}.\\
Conditions \>\eqref{eq:domination_by_ktilde} and \eqref{eq:ka-square-domin-by-ks} \>on p.~\pageref*{eq:domination_by_ktilde}.\\
Condition \>\eqref{eq:poincare-type-ineq} \>on p.~\pageref*{eq:poincare-type-ineq}.\\
Condition \>\eqref{eq:definitheitsbedingung} \>on p.~\pageref*{eq:definitheitsbedingung}.\\
Condition \>\eqref{eq:energie-abschaetzung-nach-unten} \>on p.~\pageref*{eq:energie-abschaetzung-nach-unten}.
\end{tabbing}
\bibliographystyle{bibstyle_english_num.bst}
\bibliography{Literatur}\vspace{-1em} \enlargethispage{2em}
\end{document}